\newcommand{\tp}{t'}
\newcommand{\spr}{s'}
\newcommand{\beps}{{\mbox{\boldmath $\epsilon$}}}
\newcommand{\calN}{\mathcal N}
\newcommand{\la}{\langle}
\newcommand{\ra}{\rangle}
\newcommand{\Rn}{\mathbb R}
\newcommand{\intt}{\int\hspace{-.2cm}\int}
\newtheorem{remark}{Remark}[section]
\newtheorem{theorem}{Theorem}[section]
\newtheorem{lemma}{Lemma}[section]
\newcommand\X{{\bf X}}
\def\beq{\begin{equation}}
\def\eeq{\end{equation}}
\numberwithin{equation}{section}
\numberwithin{theorem}{section}
\begin{document}

\begin{frontmatter}

\title{On the asymptotic normality of kernel estimators of the long
run covariance of functional time series}
\tnotetext[mytitlenote]{Research supported by Austrian Science
Fund (FWF) Grant P2304-N18 and NSF grant DMS 1305858}



\author{Istv\'an Berkes}

\address{ Graz University of Technology,   Institute of Statistics, Kopernikusgasse 24, 8010 Graz, Austria.}

\author{Lajos Horv\'ath}

\author{Gregory Rice}


\ead{rice$@$math.utah.edu}
\address{Department of Mathematics, University of Utah, Salt Lake City, UT 84112--0090 USA}

\begin{keyword}
functional time series, long run covariance operator, normal approximation, moment inequalities, empirical eigenvalues and eigenfunctions
\end{keyword}

\begin{abstract}
We consider the asymptotic normality in $L^2$ of kernel
estimators of the long run covariance of stationary functional
time series. Our results are established assuming a weakly dependent
Bernoulli shift structure for the underlying observations, which contains
most stationary functional time series models, under mild conditions. As a
corollary, we obtain joint asymptotics for functional principal components
computed from empirical long run covariance operators, showing that they
have the favorable property of being asymptotically independent.
\end{abstract}


\end{frontmatter}

\section{Introduction}\label{intro}

In multivariate time series analysis, the matrix valued spectral density and the long run covariance matrix, which is $2\pi$ times the spectral density evaluated at frequency zero, are fundamental in a multitude of applications. For example, the long run covariance matrix must be estimated in most inference problems related to the mean of stationary finite dimensional time series, see e.g. Hannan (1970), Xiao and Wu (2012), Politis (2011), and Aue et al (2009). Additionally, dynamic principal component analysis utilizes estimates of the long run covariance matrix as well as the spectral density to perform meaningful dimension reduction for time series data, see Brillinger (2001).

Multivariate techniques are difficult to apply, however, when the data is obtained by observing a continuous time phenomena at a high resolution or at irregularly spaced time points. A flexible alternative for studying such records is to break them at natural points, for example into daily or monthly segments, in order to form a series of curves. The field of functional time series analysis has grown considerably in recent years to provide methodology for such data; the main difference from traditional functional data analysis being that it accommodates for possible serial dependence. The long run covariance kernel, which is an analog of the long run covariance matrix, also plays a crucial role in this setting. We refer to Ferraty and Vieu (2006) for a review of methods in functional data analysis and H\"ormann and Kokoszka (2012) for a survey on functional time series analysis.



In order to formally define the objects introduced above, let $\{X_i(t)\}_{i=-\infty}^{\infty}$, $t\in[0,1]$, be a stationary functional time series. The bivariate function

$$C(t,s)=\sum_{\ell=-\infty}^\infty\gamma_\ell(t,s) ,\quad \mbox{where  }\;\;\gamma_\ell(t,s)=\mbox{cov}(X_0(t), X_\ell(s)),$$

is called the long run covariance kernel, and is a well defined element of $L^2([0,1]^2,\Rn)$, assuming mild weak dependence conditions. $C(t,s)$ arises primarily as the asymptotic covariance of the sample mean function. Via right integration, $C(t,s)$ also defines a positive definite operator on $L^2([0,1],\Rn)$ whose eigenvalues and eigenfunctions, or principal components, are the focus of a number of dimension reduction and inference techniques with dependent functional data. Due to its representation as a bi--infinite sum, $C(t,s)$ is naturally estimated with a kernel lag--window estimator of the form

\begin{align}\label{estcoveq}
\hat{C}_{N}(t,s)=\sum_{i=-\infty}^{\infty}K\left( \frac{i}{h} \right) \hat{\gamma}_i(t,s),
\end{align}

where

\begin{displaymath}
\hat{\gamma}_i(t,s)=\hat{\gamma}_{i,N}(t,s)=
\left\{
\begin{array}{ll}
\displaystyle \frac{1}{N}\sum_{j=1}^{N-i}(X_j(t)-\bar{X}_N(t))(X_{j+i}(s)-\bar{X}_N(t)),\quad &i\geq 0\\
\vspace{.3cm}
\displaystyle \frac{1}{N}\sum_{j=1-i}^N(X_j(t)-\bar{X}_N(t))(X_{j+i}(s)-\bar{X}_N(t)), \quad &i<0.
\end{array}
\right.
\end{displaymath}

We use the standard convention that $\hat{\gamma}_i(t,s)=0$ when $i \ge N$.

The estimator in \eqref{estcoveq} was introduced in Horv\'ath et. al (2013), where it is shown to be consistent under mild conditions, and its applications are developed in Horv\'ath et al (2014) and Jirak (2013) in the context of inference for the mean and stationarity testing with functional time series. H\"ormann et al (2013) develops an analog of dynamic principal component analysis based on the spectral density operator of functional time series, which is directly related to the long run covariance operator.

It is a classical result that kernel lag--window estimators of the spectral density of univariate and multivariate time series are, when suitably standardized, asymptotically normal, see Rosenblatt (1991). The definition of the spectral density operator of a stationary functional time series and its asymptotic normality were first established in the work of Panaretos and Tavakoli (2013). In order to obtain their results, functional analogs of classical cummulant summability and mixing conditions are assumed. As noted in Shao and Wu (2007), cummulant conditions are exceedingly difficult to check, even with scalar time series, and mixing conditions, although classically popular, exhibit some unattractive pathologies. For example, the autoregressive one processes with independent and identically distributed errors that take the value 1 and -1 with equal probabilities are not mixing. On top of this, in several theaters of application non linear time series models are of interest, and in this case it is unknown whether such conditions are satisfied in the infinite dimensional setting.

%

In this paper we establish the asymptotic normality of $\hat{C}_N(t,s)$ in $L^2([0,1]^2,\Rn)$ for a broad class of stationary functional time series processes. In particular, we consider the case of $L^2([0,1],\Rn)$ valued random functions exhibiting an $L^p-m$ approximable Bernoulli shift structure, which extends the results of Shao and Wu (2007) and Liu and Wu (2010) to the infinite dimensional setting. Doing so greatly generalizes the class of functional time series processes for which a normal approximation for $\hat{C}_N$ can be achieved. An immediate corollary of this result is the limit distribution of the empirical eigenvalues and eigenfunctions computed from $\hat{C}_N$, which play a fundamental role in principal component analysis with dependent data.

The rest of the paper is organized as follows. In Section \ref{first} we state our assumptions and the main result of the paper. The section concludes with an application of our results to determining the optimal bandwidth parameter. Section \ref{eig-sec} contains the application to the limit distribution of the empirical eigenvalues and eigenfunctions computed from $\hat{C}_N$. The proofs of the main results of the paper are contained in Section \ref{proofsY}, which is broken into several subsections that each illuminate the main techniques behind the proof.

\section{Assumptions  and main results}\label{first}


 Let $||\cdot||$ denote the $L^2$ norm  of square integrable functions on $[0,1]^d$, the dimension  $d\geq 1$ being clear by the input function, and let $\int$ to mean $\int_0^1$. Throughout this paper we assume that
\begin{align}
&\X=\{X_i\}_{i=-\infty}^\infty \;\mbox{ forms a sequence of Bernoulli shifts, i.e. } X_j=g(\epsilon_j,\epsilon_{j-1},...)\label{as:1}\\
&\mbox{for some measurable function}\;\; g:S^\infty \mapsto L^2\;\;\mbox{and iid random variables} \; \epsilon_j,\;\notag \\
&-\infty<j<\infty,\;\mbox{with values in a  measurable space}\; S,\notag
\end{align}

\begin{align}\label{as:1/2}
\epsilon_j(t)=\epsilon_j(t,\omega)\;\;\mbox{is jointly measurable in}\;\;(t,\omega), \;-\infty<j<\infty
\end{align}
\begin{align} \label{as:2}
 E||X_0||^{4+\delta} < \infty \;\mbox{ for some\; $\delta >0$,}
\end{align}

and
\begin{align}\label{as:3}
&\{X_n\}_{n=-\infty}^\infty \;\;\mbox{can be approximated by the } \; m\mbox{--dependent}\;\mbox{sequences}\\
&X_{n,m}=g(\epsilon_n,\epsilon_{n-1},...,\epsilon_{n-m+1},\beps_{n,m}^*) \;\mbox{with   } \beps^*_{n,m}=(\epsilon_{n,m, n-m}^*,\epsilon_{n,m, n-m-1}^*,\ldots), \notag\\
&\mbox{where the }\; \epsilon^*_{n,m,k}\mbox{'s} \;\mbox{are independent copies of}\; \epsilon_0, \mbox{independent of}  \notag\\
& \{\epsilon_i, -\infty<i<\infty\},\; \mbox{such that}\; \notag\\
&\hspace{2 cm}\sum_{k=1}^\infty w\left(\sum_{m=k}^\infty c_m\right)<\infty\;\;\;\mbox{with}\;\;c_m=(E\|X_0-X_{0,m}\|^4)^{1/4}, \notag\\
&\mbox{where }  w(t)>0 \mbox{ is regularly varying at zero, and } w(t)/t^{1/3}\to 0 \notag\\  
&\mbox{as } t\to 0. \notag
\end{align}

Nearly all stationary time series models based on independent innovations satisfy condition \eqref{as:1}, including linear processes in function spaces, and the functional ARCH and GARCH processes, see Bosq (2000) and H\"ormann et al (2013). Condition (\ref{as:3}) specifies the level of dependence that is allowed within the sequence in terms of how well it can be approximated in the $L^2$ sense by finite dependent processes, and thus defines a version of $L^p$--$m$--approximability for functional time series, see H\"ormann and Kokoszka (2010). Condition (\ref{as:3}) is satisfied when, for example, $c_m=O(m^{-\alpha})$ for
some $\alpha>4$. In comparison, Shao and Wu (2007) assume a geometric rate of decay for similar approximation coefficients with scalar time series. It follows from \eqref{as:1}--\eqref{as:3} that $C(t,s)$ is an element of $L^2([0,1]^2,\Rn)$ (cf.\ Appendix A.2 in Horv\'ath et al (2013)).

We assume that the kernel $K$ in the definition of $\hat{C}_N$ satisfies the following standard conditions:
\beq\label{k-1}
K(0)=1,
\eeq
\beq\label{k-2}
K\;\;\mbox{is symmetric around }0,\;\;K(u)=0\;\;\mbox{if}\;\; u>c\;\;\mbox{with some } c>0,
\eeq
and
\beq\label{k-3}
K\;\;\mbox{is Lipschitz  continuous on}\;\;[-c,c], \;\mbox{where}\; c \;\mbox{is given in}\; \eqref{k-2}.
\eeq
Lastly we take the window (or smoothing parameter) $h$ to satisfy that
\beq\label{h-1}
h=h(N)\to\infty\;\;\;\mbox{and}\;\;\;\frac{h(N)}{N}\to 0,\;\;\;\mbox{as}\;\;\;N\to \infty.
\eeq

The main result of our paper establishes the asymptotic limit distribution  of
$$
Z_N(t,s)=\hat{C}_N(t,s)-E\hat{C}_N(t,s).
$$

\begin{theorem}\label{main-1} If \eqref{as:1}--\eqref{h-1} and

\beq\label{m-1}
h/N^{\delta/(4+2\delta)}\to 0
\eeq

hold, where $\delta$ is defined by \eqref{as:2}, then one can define a sequence of Gaussian processes  $\Gamma_N(t,s)$ defined on the same probability space, and satisfying $E\Gamma_N(t,s)=0$, $E\Gamma_N(t,s)\Gamma_N(\tp,\spr)=L(t,s,\tp,\spr)$ with \beq\label{L-def}
L(t,s,\tp, \spr)=\left[C(t,s)C( \tp, \spr) +C(t,\tp)C(s,\spr) \right] \int_{-c}^c K^2(z)dz
\eeq
such that
\beq\label{m-0}
\|(N/h)^{1/2}Z_N-\Gamma_N\|\stackrel{P}{\to}0,\quad \mbox{as}\;\;N\to \infty.
\eeq
\end{theorem}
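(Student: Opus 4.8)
The plan is to establish \eqref{m-0} by a sequence of reductions that successively replace $\hat C_N$ by simpler approximating objects for which a CLT can be proved directly, and then to verify the covariance structure \eqref{L-def} on the limiting object. First I would dispose of the centering by the sample mean: replacing $\bar X_N$ by $EX_0$ in $\hat\gamma_i$ introduces an error that, after multiplication by $(N/h)^{1/2}$, is negligible in $L^2$, because $\bar X_N - EX_0 = O_P(N^{-1/2})$ in $L^2$ by stationarity and \eqref{as:3}, and the number of nonzero kernel weights is $O(h)$; condition \eqref{h-1} makes the resulting bound $o_P(1)$. After this step the object of interest is the ``mean-known'' statistic $\tilde C_N(t,s) = \sum_i K(i/h)\,\tilde\gamma_i(t,s)$ with $\tilde\gamma_i$ built from the centered variables $Y_j = X_j - EX_0$, i.e. a quadratic form in the $Y_j$'s. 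Next I would truncate the kernel sum: since $K$ is supported in $[-c,c]$, only lags $|i|\le ch$ contribute, so $\tilde C_N$ is genuinely a sum of $O(h)$ lag-product terms.

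The heart of the argument is a blocking/$m$-dependent approximation. Using \eqref{as:3}, I would replace each $Y_j$ by its $m$-dependent surrogate $Y_{j,m}=X_{j,m}-EX_0$ for a suitably chosen truncation level $m=m(N)\to\infty$ that is small compared with $h$; the summability of $w\!\left(\sum_{k\ge m}c_k\right)$ together with the moment bound \eqref{as:2} controls the $L^2$ error of this replacement in $(N/h)^{1/2}\tilde C_N$. One then works with the $m$-dependent quadratic form, writes the sum over $j$ in consecutive big blocks of length $\asymp h$ (or larger) separated by small blocks of length $\asymp m$, discards the small-block contributions (negligible because $m/h\to 0$), and observes that the big-block contributions are a sum of $\asymp N/h$ terms that are ``almost independent'': any two big blocks separated by more than $ch+m$ lags involve disjoint, hence independent, innovation sets, because each lag-product term only couples indices within distance $ch$. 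This reduces the problem to a CLT for a sum of independent (after a further small coupling correction) $L^2$-valued random elements, to which one applies a Hilbert-space CLT (Lindeberg form), checking the Lindeberg condition via the $(4+\delta)$-th moment bound \eqref{as:2} — this is exactly where the exponent in \eqref{m-1}, namely $h/N^{\delta/(4+2\delta)}\to 0$, is needed, since it forces the per-block normalized fourth-type moments to vanish. The Gaussian process $\Gamma_N$ is then produced on a common probability space by a standard Skorokhod/strong-approximation embedding of the partial sums of these independent summands, with covariance operator matching that of $(N/h)^{1/2}\tilde C_N$ up to $o(1)$.

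Finally I would identify the limiting covariance. Computing $\mathrm{cov}\big((N/h)^{1/2}\tilde C_N(t,s),(N/h)^{1/2}\tilde C_N(t',s')\big)$ leads to a fourth-order sum in the $Y_j$'s; splitting the fourth cumulant from the product-of-covariances part, one shows the cumulant part is $o(1)$ under \eqref{as:3}, while the covariance part, after using $\sum_i K^2(i/h)\sim h\int_{-c}^c K^2(z)\,dz$ and summing the shifted autocovariances to $C$, converges to $L(t,s,t',s')$ in \eqref{L-def}; the two terms $C(t,s)C(t',s')$ and $C(t,\tp)C(s,\spr)$ arise from the two ways of pairing the four factors. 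The main obstacle is the $m$-dependent blocking step together with the Lindeberg verification: one must choose $m=m(N)$ threading the needle between being large enough that the approximation error from \eqref{as:3} is negligible and small enough (relative to $h$, and with $h$ constrained by \eqref{m-1}) that the block sums are asymptotically independent and satisfy Lindeberg — handling this in the $L^2([0,1]^2)$-valued setting, rather than for scalars as in Shao and Wu (2007), requires the functional moment inequalities that the paper develops for precisely this purpose.
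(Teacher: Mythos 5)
Your overall architecture is the one the paper follows: eliminate the sample-mean centering using $\|\bar X_N\|=O_P(N^{-1/2})$, approximate by $m$-dependent variables via \eqref{as:3}, prove a CLT for the resulting quadratic form by blocking, couple with a Gaussian process through a Skorokhod--Dudley--Wichura representation, and identify the covariance \eqref{L-def} by separating the fourth-cumulant part from the two pair-covariance parts. There is, however, a concrete problem in your blocking step. You take big blocks of length $\asymp h$ separated by small blocks of length $\asymp m$ with $m\ll h$, and discard the small blocks ``because $m/h\to 0$''. But, as you yourself note, two big-block sums are independent only if they are separated by more than $ch+m$ indices, since a single summand $X_jX_{j+\ell}$ reaches $\ell\le ch$ steps forward; with separators of length $\asymp m\ll h$, consecutive big blocks share innovations and are \emph{not} independent, so the independent-summands CLT you invoke does not apply to them. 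The fix is the one the paper uses: separators of length of order $h$ and big blocks of length $M$ with $M/h\to\infty$, so that the separators, now non-trivial, are still variance-negligible (of relative order $h/M$); the number of big blocks is then $N/M$, not $N/h$, and \eqref{m-1} is precisely what guarantees a window for $M$ in which $M/h\to\infty$ while the Lyapunov ratio, computed from the $(2+\delta/2)$-moment of a block via Rosenthal's inequality, still tends to zero.

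Two further points. First, a Lindeberg condition alone does not yield a CLT in $L^2([0,1]^2,\Rn)$; an infinite-dimensional limit also requires uniform-in-$N$ control of the tails of the covariance operator. The paper supplies this through an explicit additional reduction to $d$-dimensional projections (Lemma \ref{appr-le-3}) followed by the Cram\'er--Wold device, letting $d\to\infty$ at the end; your direct Hilbert-space CLT would have to incorporate the equivalent tightness estimate. Second, your assertion that ``the cumulant part is $o(1)$ under \eqref{as:3}'' is where essentially all of the technical work of the paper lives: the bound \eqref{psi-1} requires splitting the index set at a threshold $\xi=1/w(1/h)$ and uses the regular variation of $w$ together with $w(t)/t^{1/3}\to 0$ in an essential way; it is not a routine consequence of summability of the approximation coefficients. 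Finally, note that the paper does not take $m=m(N)\to\infty$ with $N$ as you propose; it fixes $m$, takes $\limsup_{N}$, and only then lets $m\to\infty$ (and similarly for $d$), which avoids having to thread a moving $m$ through the blocking and the moment inequalities.
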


Theorem \ref{main-1} provides a Skorokhod--Dudley--Wichura representation of the weak convergence of $Z_N$ to a Gaussian process. We note that the distribution of the limiting Gaussian process $\Gamma_N$ does not depend on $N$, and hence the approximation \eqref{m-0} can be readily used to compute the limiting behavior of functionals of $Z_N$.

Assuming a higher moment condition than \eqref{as:2}, a more optimal result can be proven in the sense that we can relax \eqref{m-1} in Theorem \ref{main-1}.

\medskip

\begin{theorem}\label{main-2} If \eqref{as:1}--\eqref{h-1} and
\beq\label{m-2}
E\|X_0\|^8<\infty
\eeq
are satisfied, then \eqref{m-0} holds.
\end{theorem}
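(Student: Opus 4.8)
The plan is to follow the proof of Theorem \ref{main-1} in Section \ref{proofsY} essentially line by line, and to reexamine only those estimates in which the bandwidth restriction \eqref{m-1} is used. Note first that \eqref{m-2} implies \eqref{as:2} with $\delta = 4$, but merely inserting $\delta = 4$ into \eqref{m-1} would still leave the restriction $h = o(N^{1/3})$; the gain therefore has to come from \emph{replacing}, not merely retuning, the estimate that produced \eqref{m-1}. On the other hand, every part of the earlier argument that uses only \eqref{as:1}--\eqref{h-1} together with \eqref{as:2} — in particular the evaluation of the limiting covariance $L$ in \eqref{L-def}, the negligibility of the sample-mean corrections hidden in the $\hat\gamma_i$, and the Skorokhod--Dudley--Wichura construction of the Gaussian processes $\Gamma_N$ matching the leading term, which rests on second-moment computations only — carries over unchanged. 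Thus it is enough to re-derive, under \eqref{m-2} but \emph{without} \eqref{m-1}, the bound on the single remainder term that separates $(N/h)^{1/2} Z_N$ from the normalized sum of the (approximately $m$-dependent) quadratic array whose central limit behavior drives the proof.

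The key observation is that, writing $Y_{j,i} = (X_j - EX_0) \otimes (X_{j+i} - EX_0)$, condition \eqref{m-2} gives $\sup_i E\|Y_{j,i}\|^4 \le E\|X_0 - EX_0\|^8 < \infty$ by the Cauchy--Schwarz inequality, and likewise $(E\|Y_{j,i} - Y_{j,i,m}\|^2)^{1/2} = O(c_m)$ uniformly in $i$, where $Y_{j,i,m}$ is the corresponding product of $m$-dependent approximants. Hence the centered partial sums $\sum_j (Y_{j,i} - EY_{j,i})$, which in the proof of Theorem \ref{main-1} could only be handled through their $(2 + \delta/2)$-th moments — the source of the loss quantified by \eqref{m-1} — may now be controlled by a genuine fourth-moment inequality of Rosenthal type for $L^2$-valued weakly dependent sequences. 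This yields, uniformly over the $O(h)$ relevant lags $i$, a bound of order $N^2$ for the fourth moment of these sums, with constants that do not deteriorate as $h \to \infty$.

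With this in hand I would rerun the block decomposition of Section \ref{proofsY}: first replace $\hat C_N$ by an $m$-dependent surrogate $\hat C_{N,m}$, with $m = m(N) \to \infty$ chosen slowly enough that the replacement error is $o_P((h/N)^{1/2})$, which is possible by \eqref{as:3} and the fourth-moment estimate above; then split the remaining sum into big and small blocks; bound the small-block contribution and the between-block cross terms by the fourth-moment inequality, obtaining $o_P(1)$ after multiplication by $(N/h)^{1/2}$; and verify a Lyapunov condition of exponent $2$ for the big blocks, which now holds with room to spare precisely because the summands possess four moments rather than only slightly more than two — this is the step at which \eqref{m-1} was previously needed and is no longer. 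The construction of $\Gamma_N$ with covariance \eqref{L-def} then goes through as before, the covariance of $(N/h)^{1/2} Z_N$ still converging to $L$.

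The main obstacle I anticipate is bookkeeping rather than conceptual. One must confirm that the fourth-moment inequality applies uniformly over all $O(h)$ lags with constants independent of $h$ and $N$; that the product-form coefficients $(E\|Y_{j,i} - Y_{j,i,m}\|^2)^{1/2}$ inherit summability in the precise sense imposed by \eqref{as:3} (through $w$ and its regular variation at zero); and — most delicately — that when these sharper bounds are substituted into the exact remainder term that previously forced \eqref{m-1}, the powers of $N$, $h$ and $m$ recombine so that only \eqref{h-1} is required for negligibility.
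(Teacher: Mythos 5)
Your route is exactly the paper's: the only place \eqref{m-1} enters the proof of Theorem \ref{main-1} is the Lyapunov condition for the big blocks in Lemma \ref{fini-2}, where the $(2+\delta/2)$--moment bound \eqref{ff-2} on $R_1$ is used; under \eqref{m-2} the paper simply replaces \eqref{ff-2} by the fourth--moment bound $ER_1^4=O(h^2M^2)$ (this is \eqref{mo-1} in Lemma \ref{fini-4}), after which the Lyapunov ratio becomes $O(Q^{-1/4})\to0$ under \eqref{h-1} alone, and all the second--moment machinery (the $m$--dependent and finite--dimensional reductions, the covariance limits, the Skorokhod--Dudley--Wichura step) is reused verbatim, just as you say.

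One warning about the step you deferred to ``bookkeeping'', because as stated it would fail. Your fourth--moment bound is per lag: for each $i$ you bound the fourth moment of $\sum_j(Y_{j,i}-EY_{j,i})$ by $O(N^2)$ (equivalently $O(M^2)$ per block). Combining such bounds over the $O(h)$ lags by the triangle inequality in $L^4$ gives only $ER_1^4=O(h^4M^2)$ for the kernel--weighted block $R_1=\sum_{\ell}K(\ell/h)\sum_{j\in B_1}\alpha_{j,\ell}$, and then the Lyapunov ratio is $O(Q^{-1/4}h^{1/2})=O((M/N)^{1/4}h^{1/2})$; since $M/h\to\infty$, this forces $h^3/N\to0$, i.e.\ exactly the restriction $h=o(N^{1/3})$ you set out to remove. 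What is actually needed is the joint bound $ER_1^4=O(h^2M^2)=O((ER_1^2)^2)$ for the whole weighted block, obtained by expanding the fourth moment over all four lag and time indices and using the summability of the fourth--order cumulants of the $m$--dependent finite--dimensional sequence, in the same way the covariance is computed in Lemmas \ref{sec-lem} and \ref{appr-le-2}. With that joint bound in place of your lag--by--lag estimate, your argument closes and coincides with the paper's.
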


\medskip

\begin{remark}{\rm Panaretos and Tavakoli (2013) and H\"ormann et al (2013) consider the estimation and, in the case of Panaretos and Tavakoli (2013), the asymptotic theory of more general objects that they refer to as the spectral density kernels; they are defined by
$$
f_{\omega}(t,s) = \frac{1}{2\pi} \sum_{j \in {\mathbb Z}} exp(- {\bf i} \omega j)\gamma_j(t,s), \;\; \omega \in [0, 2\pi),
$$
where ${\bf i}$ is the imaginary unit. For a fixed $\omega$, $f_\omega$ is estimated analogously to the long run covariance kernel by
$$
\hat{f}_{\omega}(t,s)= \frac{1}{2\pi} \sum_{j \in {\mathbb Z}}K\left(\frac{j}{h}\right) exp(- {\bf i} \omega j) \hat{\gamma}_j(t,s).
$$
In this paper we only consider the asymptotics of $2\pi \hat{f}_0(t,s)$, but we could extend our results to the case of the joint asymptotics of $\hat{f}_\omega$ over $\omega$ as in Panaretos and Tavakoli (2013). This would require working with the fourier transform component of the definition and follows along the lines of the univariate case as demonstrated in Brillinger (2001). The difficult and novel part of this theory is in establishing a Gaussian approximation for the function space component of the spectral density, and this is captured by the long run covariance kernel.
}
\end{remark}

\subsection{Bias, bandwidth selection, and positive definiteness}

In order to infer from this the limit behavior of $\hat{C}_N - C$, we
must also consider the bias. Following Parzen (1957),  we assume that
\begin{align}\label{par-1}
&\mbox{there exists a  }{\frak q}>0\mbox{  such that  }0<\lim_{x\to 0}\frac{K(x)-1}{|x|^{\frak q}}={\frak K}<\infty,
\end{align}
and
\beq\label{par-2}
\mbox{there exists a   }{\frak q}'>{\frak q}\mbox{  such that  }\sum_{\ell=-\infty}^\infty |\ell|^{{\frak q}'}\|\gamma_\ell\|<\infty.
\eeq

The asymptotic bias is given by $h^{-{\frak q}}{\frak F}(t,s)$, where
$$
{\frak F}(t,s)={\frak K}\sum_{\ell=-\infty}^{\infty}|\ell|^{\frak q}\gamma_\ell(t,s).
$$

\begin{theorem}\label{bias-1} If \eqref{par-1}, \eqref{par-2} hold and $h^{\frak q}/N\to 0$,  then we have
$$
\left\| E\hat{C}_N- C-h^{-{\frak q}}{\frak F}\right\|=o(h^{-{\frak q}}).
$$
\end{theorem}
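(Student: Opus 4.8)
The plan is to compute $E\hat C_N$ explicitly, extract the main term $C$, identify the bias contribution coming from the kernel truncation, and bound the remainder. First I would write, using the definition \eqref{estcoveq} of $\hat C_N$ and the standard convention $\hat\gamma_i = 0$ for $|i|\ge N$,
\beq
E\hat C_N(t,s)=\sum_{|i|<N} K\!\left(\frac{i}{h}\right) E\hat\gamma_i(t,s).
\eeq
The next step is to relate $E\hat\gamma_i(t,s)$ to the true autocovariance $\gamma_i(t,s)$. Ignoring for the moment the effect of centering by $\bar X_N$ rather than by the mean (which under stationarity contributes a term of order $O(1/N)$ uniformly in $i$, and which must be controlled separately but routinely), one has $E\hat\gamma_i(t,s) = \frac{N-|i|}{N}\gamma_i(t,s) = \gamma_i(t,s) - \frac{|i|}{N}\gamma_i(t,s)$. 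Substituting this gives the decomposition
\beq\label{bias-decomp}
E\hat C_N - C = \underbrace{\sum_{|i|<N}\!\left[K\!\left(\tfrac{i}{h}\right)-1\right]\gamma_i}_{=:A_N} \;-\; \underbrace{\sum_{|i|\ge N}\gamma_i}_{=:B_N} \;-\; \underbrace{\frac1N\sum_{|i|<N} K\!\left(\tfrac{i}{h}\right)|i|\,\gamma_i}_{=:D_N} \;+\; R_N,
\eeq
where $R_N$ collects the mean-centering corrections.

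The heart of the argument is to show $\|A_N - h^{-\frak q}{\frak F}\| = o(h^{-\frak q})$ and that $B_N$, $D_N$, $R_N$ are all $o(h^{-\frak q})$. For $A_N$, I would use \eqref{par-1}: write $K(i/h)-1 = -{\frak K}|i/h|^{\frak q} + |i/h|^{\frak q}\rho(i/h)$ where $\rho(x)\to 0$ as $x\to 0$, so that
\beq
A_N = -h^{-\frak q}{\frak K}\sum_{|i|<N}|i|^{\frak q}\gamma_i + h^{-\frak q}\sum_{|i|<N}|i|^{\frak q}\rho(i/h)\,\gamma_i.
\eeq
The first sum converges in $L^2$ to $\sum_{\ell=-\infty}^\infty |\ell|^{\frak q}\gamma_\ell$ by \eqref{par-2} (since ${\frak q}'>{\frak q}$ and hence $\sum|\ell|^{\frak q}\|\gamma_\ell\|<\infty$), recovering $-h^{-\frak q}{\frak F}$ up to the sign convention in the statement, i.e. $h^{-\frak q}{\frak F}$ with the $(K(x)-1)/|x|^{\frak q}\to {\frak K}$ normalization as written. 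The error term is $o(h^{-\frak q})$: split the sum at $|i|\le \sqrt{h}$ (say), where $\sup_{|i|\le\sqrt h}|\rho(i/h)|\to 0$ and the tail $\sum_{|i|>\sqrt h}|i|^{\frak q}\|\gamma_i\|\to 0$ dominates the bounded factor $\sup|\rho|+{\frak K}$, using \eqref{par-2} and $K$ bounded. For $B_N$, the tail $\sum_{|i|\ge N}\|\gamma_i\| \le N^{-{\frak q}'}\sum_{|i|\ge N}|i|^{{\frak q}'}\|\gamma_i\| = o(N^{-{\frak q}'}) = o(h^{-\frak q})$ since $h^{\frak q}/N\to 0$ forces $h^{\frak q} = o(N) $, and more is true; one checks $h^{-\frak q}/N^{-{\frak q}'}\to 0$ because $h = O(N)$ gives $h^{\frak q} = O(N^{\frak q}) = o(N^{{\frak q}'})$ after using ${\frak q}'>{\frak q}$ together with $h^{\frak q}/N\to0$ — so $N^{-{\frak q}'} = o(h^{-{\frak q}})$ as needed. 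For $D_N$, using $|K|\le \sup|K| =: \bar K$ and splitting at $|i|\le \log h$ versus $|i|>\log h$: on the small range $\frac1N\sum_{|i|\le\log h}\bar K|i|\,\|\gamma_i\| = O(\log h/N) = o(h^{-\frak q})$ since $h^{\frak q}\log h / N \le h^{{\frak q}'}/N \to 0$ eventually; on the large range $\frac1N\sum_{|i|>\log h}\bar K|i|\,\|\gamma_i\| \le \frac{\bar K}{N}\sum_{|i|>\log h}|i|\|\gamma_i\| = o(1/N) = o(h^{-\frak q})$. Finally $R_N$, the correction from replacing $\bar X_N$ by $EX_0$ (which we may take to be $0$ by stationarity), is $O(1/N)$ in $L^2$ by a direct moment bound on $E\|\bar X_N - EX_0\|^2 = O(1/N)$ together with \eqref{as:2} and Cauchy–Schwarz, hence $o(h^{-\frak q})$.

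The main obstacle, and the one step deserving real care, is the treatment of the error term in $A_N$: because $\rho(i/h)$ is only known to vanish as $i/h\to 0$ while $|i|$ ranges up to $N$ and $K$ need not vanish outside a fixed window relative to $h$ (it vanishes for $u>c$, i.e.\ for $|i|>ch$, which does help), one must choose the split point — here $\sqrt h$, or more carefully any sequence $\eta_N$ with $\eta_N\to\infty$, $\eta_N/h\to 0$, and $\sum_{|i|>\eta_N}|i|^{\frak q}\|\gamma_i\|$ still dominating — so that both the ``$\rho$ small'' and ``tail small'' estimates hold simultaneously. A clean way is to use that $K(u)=0$ for $|u|>c$ (condition \eqref{k-2}), so the sum defining $A_N$ is really over $|i|\le ch$; then $|K(i/h)-1|\le \bar K + 1$ uniformly, and $|i|^{\frak q}\le (ch)^{\frak q}$ on that range is too crude, so one still needs the split, but now the ``large'' piece $h^{-\frak q}\sum_{\eta_N<|i|\le ch}|i|^{\frak q}\|\gamma_i\|\,(\bar K+1) \le h^{-\frak q}(\bar K+1)\sum_{|i|>\eta_N}|i|^{\frak q}\|\gamma_i\| = o(h^{-\frak q})$ by the convergent tail in \eqref{par-2}, and the ``small'' piece $h^{-\frak q}\sup_{|i|\le\eta_N}|\rho(i/h)|\sum|i|^{\frak q}\|\gamma_i\| = o(h^{-\frak q})$ since $\eta_N/h\to 0$. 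Collecting \eqref{bias-decomp} then yields $\|E\hat C_N - C - h^{-\frak q}{\frak F}\| \le \|A_N - h^{-\frak q}{\frak F}\| + \|B_N\| + \|D_N\| + \|R_N\| = o(h^{-\frak q})$, which is the claim. $\qquad\blacksquare$
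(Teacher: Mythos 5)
Your proof takes essentially the same route as the paper's. The core step --- isolating $\sum_\ell\bigl(K(\ell/h)-1\bigr)\gamma_\ell$, applying \eqref{par-1} on lags $|\ell|\le\eta_N$ with $\eta_N\to\infty$, $\eta_N/h\to 0$, and killing the remaining lags with the tail of the convergent weighted sum from \eqref{par-2} --- is exactly the paper's argument; your moving split point $\eta_N$ is the paper's split at $\varepsilon h$ with the $\lim_{\varepsilon\to0}\limsup_{h\to\infty}$ double limit unwound, and your $B_N$ and $D_N$ are the terms the paper dispatches in a line each. Two small slips in the lower-order terms deserve mention. First, for $D_N$ you invoke ``$h^{{\frak q}}\log h/N\le h^{{\frak q}'}/N\to 0$,'' but $h^{{\frak q}'}/N\to0$ is not among the hypotheses, and $\log h/N=o(h^{-{\frak q}})$ does not follow from $h^{{\frak q}}/N\to0$ alone (take $h^{\frak q}\sim N/\log N$). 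The conclusion is still correct: since $K$ vanishes off $[-ch,ch]$ by \eqref{k-2}, one has $\|D_N\|\le \frac{1}{N}\sup|K|\sum_{|i|\le ch}|i|\,\|\gamma_i\|$, which is $O(1/N)$ when ${\frak q}'\ge1$ and $O(h^{1-{\frak q}'}/N)$ otherwise, and in either case $o(h^{-{\frak q}})$ by \eqref{par-2}, $h^{\frak q}/N\to 0$ and $h/N\to0$. Second, your claim $R_N=O(1/N)$ understates the mean-centering effect: the per-lag error from replacing $EX_0$ by $\bar{X}_N$ is indeed $O(1/N)$ uniformly in the lag, but it is summed over the $O(h)$ lags in the support of $K$, so $R_N=O(h/N)$ (asymptotically of the form $-(h/N)\,C(t,s)\int_{-c}^cK(u)\,du$), and $h/N=o(h^{-{\frak q}})$ requires $h^{1+{\frak q}}/N\to0$, which is not assumed. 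The paper's own proof silently drops this term by writing $E\hat{C}_N$ as though $\hat{\gamma}_i$ were uncentered, so you are more careful than the paper in acknowledging that the term exists --- but neither your argument nor the paper's actually controls it under the stated hypotheses, and closing it cleanly requires either the extra bandwidth condition or passing to the uncentered estimator first.
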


\medskip
We note that if the unbiased estimators $N\hat{\gamma}_{i,N}(t,s)/(N-i)$ are used in the definition of $\hat{C}_N$, then Theorem \ref{bias-1} remains true without assuming $h^{\frak q}/N\to 0$. \\

The minimization of the asymptotic mean squared error provides a popular choice for $h$ in case of univariate data (cf.\ Parzen (1957) and Andrews (1991)). In our case the ``optimal" $h$ minimizes $E\|\hat{C}_N-C\|^2$. Our results show that
\beq\label{m-fro}
E\|\hat{C}_N-C\|^2\approx \frac{h}{N}E\|\Gamma_1\|^2+h^{-2{\frak q}}\|{\frak F}\|^2.
\eeq
Since
$$
E\|\Gamma_1\|^2=2\left(\intt C(t,s)dtds\right)^2\int_{-c}^cK^2(u)du,
$$
we get that the minimum of the asymptotic value of the mean squared error in \eqref{m-fro} is reached  at
$$
h_{{\it opt}}\approx {\frak c}_0N^{1/(1+2{\frak q})},
$$
where
$$
{\frak c}_0=\left({\frak q}\|{\frak F}\|^2\right)^{1/(1+2{\frak q})}\left(\left(\intt C(t,s)dtds\right)^2\int_{-c}^cK^2(u)du   \right)^{-1/(1+2{\frak q})}.
$$
The constant ${\frak c}_0$ is a complicated function of the unknown correlations $\gamma_\ell(t,s)$ and the long run covariance function $C(t,s)$. Replacing the unknown functions $\gamma_\ell(t,s)$ and $C(t,s)$ with their empirical counterparts, we get a plug in estimate for ${\frak c}_0$. A data driven estimator is discussed in Horv\'ath et al (2014) for the ``flat top" kernel, i.e.\ when ${\frak q}=\infty$. 

According to \eqref{m-fro} and since $h$ tends to infinity with $N$, the asymptotic integrated mean squared error is minimized by using a kernel $K$ for which ${\frak q}$ may be taken to be as large as possible. This encourages the use of a kernel function that is smooth or ``flat" near the origin, but for arbitrary kernels $\hat{C}$ need not be positive definite. Several methods have been proposed to address related issues in the finite dimensional setting, see Politis (2011); they typically involve either sacrificing possible improvements in the bias by using a kernel that makes the estimator positive definite from the outset, like the Bartlett kernel, or using a higher order kernel and then altering the estimator to be positive definite by removing the negative eigenvalues from the diagonalization of the operator. These methods could be adapted to the functional setting, and the authors plan on studying such techniques in future work.

\section{Application to the limit distribution of functional principal components}\label{eig-sec}

A technique to reduce the dimension of functional data that has received considerable attention, both in applications and theoretical investigations, is  principal component analysis (PCA); we refer to Ramsey and Silverman (2005) and Horv\'ath and Kokoszka (2012) for reviews of the subject. Typically the principal components used are computed as the eigenfunctions of the  sample covariance function
$$
\hat{C}^s_N(t,s) = \frac{1}{N}\sum_{i = 1}^N (X_i(t)-\bar{X}_N(t)) (X_i(s)-\bar{X}_N(s)).
$$
Due to their important role in PCA, the difference between the empirical and theoretical eigenvalues and eigenfunctions have been investigated by several authors. Kokoszka and Horv\'ath (2012, pp.\ 31--35) contains inequalities for the accuracy of the replacement of the theoretical PCA's with their empirical counterparts. The asymptotic normality of the deviation between the empirical and theoretical eigenvalues and eigenfunctions was proven by Dauxois et al (1982), Bosq (2000) and Hall and Hosseini--Nasab (2007) assuming that the $X_i$'s are independent and identically distributed. In great generality, Mas and Menneteau (2003) show that the asymptotic properties of the empirical eigenvalues and eigenfunctions are automatically inherited from the asymptotic properties of their corresponding operators. Kokoszka and Reimherr (2012) investigated the asymptotic properties of the eigenvalues and eigenfunctions of $\hat{C}^s_N$ when the observations are from a stationary functional time series.   \\

In case of inference with dependent functional data it may be preferable to use the theoretical principal components $\{v_i\}_{i\ge1}$ defined by the the long run covariance operator,

\begin{align}\label{theoevals}
\lambda_i v_i(t)=\int C(t,s)v_i(s)ds,
\;\;1\leq i < \infty,
\end{align}

where we have used $\lambda_1 \ge \lambda_2 \ge \cdots \ge 0$ to denote the ordered eigenvalues. These define an example of dynamic functional principal components as defined in Hormann et al (2014). The theoretical eigenvalues and eigenfunctions defined in \eqref{theoevals} can be estimated from a sample by the eigenvalues and eigenfunctions of the empirical long run covariance function
\begin{align}\label{evalest}
\hat{\lambda}_i\hat{v}_i(t)=\int \hat{C}_N(t,s)\hat{v}_i(s)ds,
\;\;1\leq i\leq N.
\end{align}
It was shown in Horv\'ath et al (2012) that if for some $p\ge 1$,
\begin{align}\label{evals}
\lambda_1 > \lambda_2 > \cdots > \lambda_p >\lambda_{p+1} \ge 0,
\end{align}
then the estimators defined in \eqref{evalest} are asymptotically consistent in the sense that
$$
\max_{1 \le i \le p } |\hat{\lambda}_i - \lambda_i | =o_P(1), \;\;\mbox{and}\;\; \max_{1 \le i \le p } \| \hat{s}_i\hat{v}_i - v_i \| =o_P(1),\quad \mbox{as}\;\;N\to \infty,
$$
where $\hat{s}_i=\mbox{sign}( \langle \hat{v}_i, v_i \rangle )$. We show that Theorems \ref{main-1}--\ref{bias-1} imply the limit distributions of $(N/h)^{1/2}(\hat{\lambda}_i-\lambda_i)$, $(N/h)^{1/2}(\hat{v}_i(t)-v_i(t))$ and $(N/h)^{1/2}\|\hat{v}_i-v_i\|$, $1\leq i\leq p.$

\begin{theorem}\label{th:eig-1} Under   the conditions of Theorems \ref{main-1} and \ref{bias-1}, \eqref{evals}  and  assuming $\lim_{N\to \infty} N/h^{1+2q}= {\mathfrak a}$, there exist random variables ${\frak g}_{\ell,N}$ and random functions ${\mathcal  G}_{\ell,N}(t), 1\leq \ell\leq p$ such that
$$
\max_{1\leq \ell \leq p}|\left({{N}}/{{h}}\right)^{1/2} (\hat{\lambda}_\ell - \lambda_\ell)-{\frak g}_{\ell,N}|=o_P(1),
$$

$$
\max_{1\leq \ell \leq p}\left\|(N/h)^{1/2}( \hat{s}_\ell \hat{v}_\ell-v_\ell) -{\mathcal G}_{\ell,N} \right\|=o_P(1)
$$

and

\begin{align*}
&\left\{ {\frak g}_{\ell,N},  {\mathcal G}_{\ell,N}(t), 1\leq \ell\leq p\right\}\\
&\hspace{1 cm}\stackrel{{\mathcal D}}{=}\Biggl\{
\lambda_\ell\left(2\int_{-c}^cK^2(z)dz\right)^{1/2} {\mathcal N}_{\ell, \ell}+{\frak a}\intt{\frak F}(u,s)v_\ell(s)v_\ell(u)duds,\\
&\hspace{2.5cm}\left(\int_{-c}^cK^2(z)dz\right)^{1/2}\sum_{1\leq k\neq\ell<\infty}v_k(t)\frac{(\lambda_\ell\lambda_k)^{1/2}}{\lambda_\ell-\lambda_k}{\mathcal N}_{\ell,k}\\
&\hspace{3.5cm}+{\frak a}
\sum_{1\leq k\neq\ell<\infty}\frac{v_k(t)}{\lambda_\ell-\lambda_k}\intt {\frak F}(u,s)v_\ell(u)v_\ell(s)duds, 1\leq \ell\leq p
\Biggl\},
\end{align*}
where ${\mathcal N}_{\ell, k}$ are independent standard normal random variables.
\end{theorem}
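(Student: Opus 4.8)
The plan is to derive Theorem~\ref{th:eig-1} from a first--order perturbation expansion of the empirical eigenelements around those of $C$, with the perturbation being the Gaussian--plus--bias approximation of $(N/h)^{1/2}(\hat C_N-C)$ supplied by Theorems~\ref{main-1} and \ref{bias-1}. Throughout I identify a bivariate kernel with the self--adjoint Hilbert--Schmidt operator it induces on $L^2([0,1],\Rn)$ by right integration, so that the operator norm agrees with $\|\cdot\|$ on $L^2([0,1]^2,\Rn)$. First I would write $\Delta_N=\hat C_N-C=(\hat C_N-E\hat C_N)+(E\hat C_N-C)$: Theorem~\ref{main-1} gives $(N/h)^{1/2}(\hat C_N-E\hat C_N)=\Gamma_N+o_P(1)$ in $L^2([0,1]^2,\Rn)$, and Theorem~\ref{bias-1} together with $N/h^{1+2{\frak q}}\to{\frak a}$ gives the deterministic limit $(N/h)^{1/2}(E\hat C_N-C)\to b\,{\frak F}$ in the same norm, with $b=\lim_{N\to\infty}(N/h^{1+2{\frak q}})^{1/2}={\frak a}^{1/2}$. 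Hence
\beq\label{pf-1}
(N/h)^{1/2}\Delta_N=\Gamma_N+b\,{\frak F}+o_P(1)\qquad\text{in }L^2([0,1]^2,\Rn).
\eeq

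Next, under \eqref{evals} each $\lambda_\ell$, $1\le\ell\le p$, is a simple eigenvalue separated from the rest of the spectrum by $\rho_\ell:=\min(\lambda_{\ell-1}-\lambda_\ell,\ \lambda_\ell-\lambda_{\ell+1})>0$ (convention $\lambda_0=\infty$; here $\lambda_k\downarrow0$ rules out accumulation of the spectrum at $\lambda_\ell$). Standard eigenelement perturbation expansions for self--adjoint compact operators (cf.\ the inequalities in Kokoszka and Horv\'ath (2012) and Hall and Hosseini--Nasab (2007)) then give, on the event $\{\|\Delta_N\|<\rho_\ell/2\}$, of probability tending to one,
$$
\hat\lambda_\ell-\lambda_\ell=\la\Delta_N v_\ell,v_\ell\ra+r_{\ell,N},\qquad \hat s_\ell\hat v_\ell-v_\ell=\sum_{k\neq\ell}\frac{\la\Delta_N v_\ell,v_k\ra}{\lambda_\ell-\lambda_k}\,v_k+\varrho_{\ell,N},
$$
with $|r_{\ell,N}|\vee\|\varrho_{\ell,N}\|\le c_\ell\|\Delta_N\|^2$. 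Since $\|\Delta_N\|=O_P((h/N)^{1/2})+O(h^{-{\frak q}})$, and $h\to\infty$ while $N/h^{1+2{\frak q}}$ is bounded, we get $(N/h)^{1/2}\|\Delta_N\|^2=O_P((h/N)^{1/2})+O\big((N/h^{1+4{\frak q}})^{1/2}\big)=o_P(1)$, so the rescaled remainders are $o_P(1)$ uniformly over $1\le\ell\le p$. The functional $A\mapsto\la Av_\ell,v_\ell\ra$ and the map $A\mapsto\sum_{k\neq\ell}(\lambda_\ell-\lambda_k)^{-1}\la Av_\ell,v_k\ra v_k$ (the latter of operator norm at most $\rho_\ell^{-1}$) are bounded linear on Hilbert--Schmidt operators, so I would push \eqref{pf-1} through them and set
$$
{\frak g}_{\ell,N}:=\la(\Gamma_N+b\,{\frak F})v_\ell,v_\ell\ra,\qquad {\mathcal G}_{\ell,N}:=\sum_{k\neq\ell}\frac{\la(\Gamma_N+b\,{\frak F})v_\ell,v_k\ra}{\lambda_\ell-\lambda_k}\,v_k,
$$
which yields $\max_{\ell\le p}|(N/h)^{1/2}(\hat\lambda_\ell-\lambda_\ell)-{\frak g}_{\ell,N}|=o_P(1)$ and $\max_{\ell\le p}\|(N/h)^{1/2}(\hat s_\ell\hat v_\ell-v_\ell)-{\mathcal G}_{\ell,N}\|=o_P(1)$.

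Finally I would identify the joint law of the right--hand sides. The family $\{\la\Gamma_N v_\ell,v_k\ra:1\le\ell\le p,\ k\ge1\}$ is jointly centred Gaussian with a law independent of $N$, since the covariance of $\Gamma_N$ is the fixed kernel $L$ of \eqref{L-def}. A direct second--moment computation from \eqref{L-def}, using \eqref{theoevals} in the form $\int C(t,s)v_k(s)ds=\lambda_k v_k(t)$ and the orthonormality of $\{v_k\}$, gives $E\la\Gamma_N v_\ell,v_\ell\ra^2=2\lambda_\ell^2\kappa$ and $E\la\Gamma_N v_\ell,v_k\ra^2=\lambda_\ell\lambda_k\kappa$ for $k\neq\ell$ (with $\kappa=\int_{-c}^cK^2(z)dz$), and shows that the diagonal terms $\la\Gamma_N v_\ell,v_\ell\ra$ and the off--diagonal terms $\la\Gamma_N v_\ell,v_k\ra$ are mutually uncorrelated, hence (being jointly Gaussian) independent; this is the step in which the precise form of $L$ is used. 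Consequently there are independent standard normals ${\mathcal N}_{\ell,k}$ with $\la\Gamma_N v_\ell,v_\ell\ra=\lambda_\ell(2\kappa)^{1/2}{\mathcal N}_{\ell,\ell}$ and $\la\Gamma_N v_\ell,v_k\ra=(\lambda_\ell\lambda_k\kappa)^{1/2}{\mathcal N}_{\ell,k}$, while the terms $b\,\la{\frak F}v_\ell,v_k\ra$ supply the deterministic ${\frak F}$--dependent summands; inserting these into ${\frak g}_{\ell,N}$ and ${\mathcal G}_{\ell,N}$ produces the representation in the statement.

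The main obstacle is the control of the quadratic perturbation remainder after rescaling by $(N/h)^{1/2}$: the bandwidth condition $N/h^{1+2{\frak q}}\to{\frak a}$ enters precisely here, since without a bound on $N/h^{1+4{\frak q}}$ the squared bias would survive the rescaling. A secondary point is to confirm that the Hilbert--Schmidt convergence in \eqref{pf-1} transfers to $L^2$--convergence of the infinite eigenfunction series, which it does because that series is the image of $(N/h)^{1/2}\Delta_N$ under a fixed bounded operator determined by the spectral gap. The remaining ingredients — the first--order expansions and the moment computation fixing $L$ — are routine once Theorems~\ref{main-1} and \ref{bias-1} are available.
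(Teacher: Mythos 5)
Your proposal follows essentially the same route as the paper: a first--order perturbation expansion of the eigenelements (the paper outsources this step to Proposition 1 of Kokoszka and Reimherr (2013), you carry it out directly with an explicit quadratic remainder and check $(N/h)^{1/2}\|\Delta_N\|^2=o_P(1)$), continuity of the spectral--gap map ${\frak M}_\ell$, substitution of $\Gamma_N$ plus the bias term via Theorems \ref{main-1} and \ref{bias-1}, and identification of the limit law from the Gaussianity of $\Gamma_N$ (the paper via the representation \eqref{root}, you via second moments of the projections $\la \Gamma_N v_\ell, v_k\ra$ -- the same computation in different clothing).

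Two points deserve attention. First, the normalization of the bias: under the hypothesis $\lim N/h^{1+2{\frak q}}={\frak a}$ your derivation correctly gives $(N/h)^{1/2}(E\hat C_N-C)\to {\frak a}^{1/2}{\frak F}$, so your $b={\frak a}^{1/2}$ does not literally reproduce the coefficient ${\frak a}$ appearing in the statement and in \eqref{g-2}; you should either flag this as a normalization discrepancy or refrain from claiming exact agreement with the stated representation. Second, and more substantively, your assertion that ``a direct second--moment computation from \eqref{L-def} \ldots shows that the diagonal terms and the off--diagonal terms are mutually uncorrelated'' is not correct for the diagonal terms across different indices if $L$ is taken verbatim from \eqref{L-def}: the summand $C(t,s)C(t',s')$ yields $E\la\Gamma_N v_\ell,v_\ell\ra\la\Gamma_N v_{\ell'},v_{\ell'}\ra=\lambda_\ell\lambda_{\ell'}\int_{-c}^cK^2(z)dz\neq0$ for $\ell\neq\ell'$, so the joint representation over $1\leq\ell\leq p$ with independent ${\mathcal N}_{\ell,\ell}$ would not follow. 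The independence used in the theorem corresponds to the pairing $C(t,t')C(s,s')+C(t,s')C(s,t')$, which is what the covariance computation behind Lemma \ref{appr-le-2} actually produces and what the Karhunen--Lo\'eve representation \eqref{root} tacitly encodes; for a single fixed $\ell$ (the row $\{\la\Gamma_N v_\ell,v_k\ra\}_{k\geq1}$) your computation is fine under either form. So to complete your argument you must carry out the moment computation with the correct pairing of the covariance kernel; as written, that step would fail for the joint statement over $\ell=1,\dots,p$.
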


If ${\frak a}=0$, then Theorem \ref{th:eig-1} implies that the empirical eigenvalues and eigenfunctions are asymptotically consistent with rate $(h/N)^{1/2}$. Theorem \ref{th:eig-1} also shows  that $({{N}}/{{h}})^{1/2} (\hat{\lambda}_\ell - \lambda_\ell)$ are asymptotically independent and normally distributed, and that, on top of being orthogonal functions, $ (N/h)^{1/2}( \hat{s}_\ell \hat{v}_k-v_k)\;\; 1\le k \le p$ are stochastically independent and Gaussian. This result is along the lines of the asymptotic independence and normality of the suitably normed and centered empirical eigenvalues and  eigenfunctions of the empirical covariance function of independent and identically distributed functional observations.  The main difference is the norming; we use $(N/h)^{1/2}$ in the case of the kernel estimator for the long run covariance function instead of the $N^{1/2}$ rate in the  case of the sample covariance.
Since   $(N/h)^{1/2}\|\hat{s}_\ell \hat{v}_\ell-v_\ell\|^2, 1\leq \ell \leq p$ are  asymptotically independent, assuming that ${\frak a}=0$,  Theorem \ref{th:eig-1} yields

\begin{align*}
\frac{N}{h}\|\hat{s}_\ell \hat{v}_\ell-v_\ell\|^2\;\;\stackrel{{\mathcal D}}{\to}\;\; \lambda_\ell\int_{-c}^cK^{2}(z)dz\sum_{k\neq \ell}\frac{\lambda_k}{(\lambda_\ell-\lambda_k)^2}{\mathcal N}_{\ell, k}^2.
\end{align*}

This is the analogue of the result of Dauxois et al (1982) to  the functional time series case.

\section{Proofs of Theorems \ref{main-1}--\ref{th:eig-1}}\label{proofsY}

The proofs of the main results of the paper, Theorems \ref{main-1} and Theorems \ref{main-2}, are carried out in three primary steps. Firstly, we show that the process $Z_N$ can be well approximated by an analogous process $Z_{N,m}$ that is constructed using $m$--dependent random functions with the aid of \eqref{as:3} in Subsection \ref{lem-mdep}. Once we have achieved this approximation, we obtain a lower dimensional approximation $Z^d_{N,m}$ based on $d$ dimensional random functions via a projection technique in Subsection \ref{fine-dim}. It is then straightforward to create a gaussian approximation for this process (Subsection \ref{clt-fine-dim}), and we may then retrace our steps with the gaussian process by letting $d$ and $m$ tend to infinity (Subsection \ref{proof-1}). We then obtain as a simple corollary the asymptotic distributions of the eigenvalues and eigenfunctions in Subsection \ref{sec-eig-pr}.

\subsection{Approximation with $m$--dependent sequences}\label{lem-mdep}

To simplify notation, we assume throughout the proofs that $c=1$ in \eqref{k-2} and \eqref{k-3}.\\
First we show that replacing the sample mean $\bar{X}_N(t)$ with $EX_0(t)$ in the definition of $\hat{\gamma}_i$ does not effect the limit distribution of $Z_N^2.$ It is clear that  we can assume without loss of generality that
\beq\label{zzero}
EX_i(t)=0.
\eeq
Let
\begin{align*}
\tilde{C}_N(t,s)=\sum_{i=-\infty}^\infty K\left(\frac{i}{h}\right)\tilde{\gamma}_i(t,s),
\end{align*}
where
\begin{align}\label{til-g}
\tilde{\gamma}_i(t,s)=\tilde{\gamma}_{i,N}(t,s)=
\left\{
\begin{array}{ll}
\displaystyle \frac{1}{N}\sum_{j=1}^{N-i}X_j(t)X_{j+i}(s),\quad &i\geq 0\\
\vspace{.3cm}
\displaystyle \frac{1}{N}\sum_{j=1-i}^N X_j(t)X_{j+i}(s), \quad &i<0.
\end{array}
\right.
\end{align}
We prove in the following lemma that $Z_N$ and $\tilde{Z}_N$ have the same limit distribution, where
$
\tilde{Z}_N(t,s)=\tilde{C}_N(t,s)-E\tilde{C}_N(t,s).
$
\begin{lemma}\label{m-rem} If \eqref{as:1}--\eqref{h-1} are satisfied, then we have that
\beq\label{mr-1}
\frac{N}{h}\|Z_N-\tilde{Z}_N\|^2=o_P(1).
\eeq
\end{lemma}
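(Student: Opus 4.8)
The plan is to expand the difference $Z_N - \tilde Z_N$ explicitly and show that, after multiplication by $(N/h)^{1/2}$, each resulting term is $o_P(1)$ in $L^2([0,1]^2,\Rn)$. Comparing $\hat\gamma_i$ with $\tilde\gamma_i$ under the centering \eqref{zzero}, the $i\ge 0$ piece of $\hat\gamma_i - \tilde\gamma_i$ equals
\[
-\bar X_N(t)\frac1N\sum_{j=1}^{N-i}X_{j+i}(s) - \bar X_N(t)\frac1N\sum_{j=1}^{N-i}X_j(t) + \frac{N-i}{N}\bar X_N(t)\bar X_N(t),
\]
plus the analogous expression for $i<0$; here I have written the mixed term carefully because the estimator subtracts $\bar X_N(t)$ (not $\bar X_N(s)$) in the second factor, which only makes the bookkeeping mildly asymmetric. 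Since $Z_N - \tilde Z_N = (\hat C_N - \tilde C_N) - E(\hat C_N - \tilde C_N)$, and since the dominant terms all contain a factor $\bar X_N$, the idea is that $\bar X_N = O_P(N^{-1/2})$ in $L^2$ (a consequence of \eqref{as:1}--\eqref{as:3}, e.g. via the $m$-approximability and the $L^4$ moment bound), while the remaining averaged sums are $O_P(1)$, and the kernel weights contribute a factor $\sum_i |K(i/h)| = O(h)$. Putting these together one expects $\|\hat C_N - \tilde C_N\| = O_P(h/\sqrt N)$, hence $\frac Nh\|\hat C_N - \tilde C_N\|^2 = O_P(h/N) = o_P(1)$ by \eqref{h-1}, and the same bound for the deterministic centering term.

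Concretely, I would introduce the partial-sum function $S_N(t) = \sum_{j=1}^N X_j(t)$ so that $\bar X_N(t) = S_N(t)/N$, and rewrite, for $i\ge 0$,
\[
\frac1N\sum_{j=1}^{N-i} X_{j+i}(s) = \frac1N\big(S_N(s) - S_i(s)\big),\qquad \frac1N\sum_{j=1}^{N-i} X_j(t) = \frac1N S_{N-i}(t).
\]
Then $\hat C_N - \tilde C_N$ becomes a finite sum (over $|i|<h$, by \eqref{k-2}) of products of the form $\bar X_N(t)\cdot(\text{average of }X\text{'s})$ and $\bar X_N(t)\bar X_N(t)$, each weighted by $K(i/h)$. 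For the worst term, $\sum_i K(i/h)\,\frac{N-i}{N}\bar X_N(t)\bar X_N(t) = \bar X_N(t)\bar X_N(t)\sum_i K(i/h)\frac{N-i}{N}$; the scalar factor is $O(h)$ by boundedness and finite support of $K$ together with \eqref{h-1}, while $\|\bar X_N \otimes \bar X_N\| = \|\bar X_N\|^2 = O_P(1/N)$ in $L^2([0,1]^2)$. For the mixed terms like $\sum_i K(i/h)\bar X_N(t)\frac1N(S_N(s)-S_i(s))$, I would bound the $L^2$ norm by $\|\bar X_N\|\cdot\sup_{0\le i\le h}\|N^{-1}(S_N - S_i)\|$ times $\sum_i|K(i/h)| = O(h)$; since $S_N = O_P(\sqrt N)$ and $S_i = O_P(\sqrt h)$ uniformly for $i\le h$ (a maximal-inequality argument, again available from the $L^4$ bound \eqref{as:2} and $m$-approximability — this is the one nontrivial input), the supremum is $O_P(\sqrt N/N) = O_P(N^{-1/2})$, giving the same $O_P(h/\sqrt N)$ overall bound. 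Finally $\|E(\hat C_N - \tilde C_N)\| \le E\|\hat C_N - \tilde C_N\|$ inherits the same rate via the $L^1$-version of the same estimates (using $E\|\bar X_N\|^2 = O(1/N)$ etc.), so $\frac Nh\|Z_N - \tilde Z_N\|^2 \le \frac{2N}{h}\big(\|\hat C_N - \tilde C_N\|^2 + \|E(\hat C_N - \tilde C_N)\|^2\big) = O_P(h/N) = o_P(1)$.

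The main obstacle I anticipate is the uniform control of the partial sums: I need $\sup_{0\le i\le h}\|S_i\|$ and, more to the point, the $L^2([0,1])$-norm of partial-sum segments to be of the right order, which calls for a functional maximal inequality (of Menshov--Rademacher or moment type) valid under the weak-dependence condition \eqref{as:3} rather than for independent summands. This is precisely the kind of estimate developed later in the paper for the main theorems, so I would cite it (or its one-dimensional specialization) rather than reprove it here; everything else is deterministic bookkeeping on the finitely many kernel weights $K(i/h)$, $|i|<h$, together with the elementary bound $E\|\bar X_N\|^2 = N^{-2}E\|S_N\|^2 = O(N^{-1})$, which follows from absolute summability of the $\gamma_\ell$ established from \eqref{as:1}--\eqref{as:3}. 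A minor subtlety worth a sentence in the write-up is the asymmetry caused by the estimator using $X_{j+i}(s) - \bar X_N(t)$ instead of $X_{j+i}(s)-\bar X_N(s)$, but this affects only which marginal of $\bar X_N$ appears and not the orders of magnitude, so the conclusion is unchanged.
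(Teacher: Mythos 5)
Your overall strategy is the same as the paper's: expand $\hat{C}_N-\tilde{C}_N$ so that every term carries a factor of $\bar{X}_N$, use $\|\bar{X}_N\|=O_P(N^{-1/2})$ (the paper cites Berkes et al (2013) for this, see \eqref{bhr-1}), let the compact support of $K$ contribute a factor $O(h)$, and check that the centering term obeys the same bound. Two points deserve comment.

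First, there is an internal inconsistency in your opening paragraph: if the ``remaining averaged sums'' were only $O_P(1)$, you would get $\|\hat{C}_N-\tilde{C}_N\|=O_P(h/\sqrt{N})$ and hence $\frac{N}{h}\|\hat{C}_N-\tilde{C}_N\|^2=O_P(h)$, which diverges. The lemma genuinely needs the middle factor $\bigl\|\frac1N\sum_{j=1}^{N-i}X_{j+i}\bigr\|$ to be $O_P(N^{-1/2})$, so that each mixed term is $O_P(N^{-1/2})\cdot O_P(N^{-1/2})\cdot O(h)=O_P(h/N)$; your second paragraph does supply this, so the slip is only in the heuristic, but it is exactly the point on which the proof turns.

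Second, your route to that $O_P(N^{-1/2})$ bound --- writing the inner sums as $N^{-1}(S_N-S_i)$ and controlling $\sup_{0\leq i\leq h}\|S_i\|$ by a functional maximal inequality --- is an avoidable detour, and the citation you propose does not exist: no such maximal inequality is developed later in this paper. (It could be imported from the invariance principle of Berkes et al (2013), and note that the crude union bound $E\sup_{i\leq h}\|S_i\|^2\leq\sum_{i\leq h}E\|S_i\|^2=O(h^2)$ would \emph{not} suffice when $h$ grows faster than $\sqrt{N}$, which \eqref{h-1} permits.) The paper instead computes the second moment of the entire kernel-weighted sum directly,
\begin{align*}
E\left\|\sum_{i=0}^{\infty}K(i/h)\frac1N\sum_{j=1}^{N-i}X_j\right\|^2
=\frac{1}{N^2}\sum_{i,\ell}K(i/h)K(\ell/h)\sum_{j,k}\intt EX_j(t)X_k(s)\,dt\,ds=O(h^2/N),
\end{align*}
using only the absolute summability \eqref{simple-2} of $\intt EX_0(t)X_i(s)\,dt\,ds$, which follows from \eqref{as:3}; no uniformity over $i$ is ever needed. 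I would recommend replacing your maximal-inequality step with this direct moment computation; with that substitution your argument is complete and matches the paper's.
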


\begin{proof} It is easy to see that

\begin{align}\label{mea-1}
&\|Z_N-\tilde{Z}_N\|\\
&\hspace{.3 cm}\leq \|\bar{X}_N\|\left\{\left\|\sum_{i=0}^\infty K\left(\frac{i}{h}\right)\frac{1}{N}\sum_{j=1}^{N-i}X_j\right\|+\left\|\sum_{i=0}^\infty K\left(\frac{i}{h}\right)\frac{1}{N}\sum_{j=i+1}^{N}X_j\right\|\right\}\notag\\
&\hspace{1 cm}+\|\bar{X}_N\|\left\{\left\|\sum_{i=-\infty}^0 K\left(\frac{i}{h}\right)\frac{1}{N}\sum_{j=1-i}^{N}X_j\right\|+\left\|\sum_{i=-\infty}^0 K\left(\frac{i}{h}\right)\frac{1}{N}\sum_{j=1}^{N+i}X_j\right\|\right\}\notag\\
&\hspace{1 cm}+\|\bar{X}_N\|^2\left|  \sum_{i=-\infty}^\infty K\left(\frac{i}{h}\right)  \right|.\notag
\end{align}
Berkes et al (2013) showed that under \eqref{as:1}--\eqref{as:3}
\beq\label{bhr-1}
\|\bar{X}_N\|=O_P(N^{-1/2}),
\eeq
 and therefore by \eqref{k-2} and   \eqref{k-3}
 $$
\| \bar{X}_N\|^2\left|  \sum_{i=-\infty}^\infty K\left({i}/{h}\right)  \right|=O_P(h/N).
 $$
On account of $EX_0(t)X_{i,i}(s)=0$, by \eqref{as:3} we have that
\begin{align}\label{simple-2}
\sum_{ i=1}^\infty\left|\intt E X_0(t)X_i(s)dtds\right|\leq (E\|X_0\|^2)^{1/2}\sum_{i=1}^\infty (E\|X_0-X_{0,i}\|^2)^{1/2}<\infty,
\end{align}
and therefore we obtain immediately that
 \begin{align*}
 &\left\|\sum_{i=0}^\infty K\left({i}/{h}\right)\frac{1}{N}\sum_{j=1}^{N-i}X_j\right\|^2=N^{-2}\sum_{i,\ell=0}^\infty K\left({i}/{h}\right) K\left({\ell}/{h}\right)\sum_{j=1}^{N-i}\sum_{k=1}^{N-\ell}\intt
 EX_j(t)X_k(s)dtds\\
 &\hspace{1cm}=O(1/N)\sum_{i,\ell=0}^\infty| K\left({i}/{h}\right) K\left({\ell}/{h}\right)|\\
 &\hspace{1cm}=O_P(h^2/N),
 \end{align*}
 where we used again \eqref{k-2} and   \eqref{k-3}. Thus we get by \eqref{bhr-1} that
 $$
 \|\bar{X}_N\|\left\|\sum_{i=0}^\infty K\left({i}/{h}\right)\frac{1}{N}\sum_{j=1}^{N-i}X_j\right\|=O_P(h/N).
 $$
 Similar arguments provide the same upper bounds for the other terms in \eqref{mea-1} which implies that $(N/h)^{1/2}\|\tilde{Z}_N-Z_N\|=O_P((h/N)^{1/2})=o_P(1) $ which gives \eqref{mr-1} .
\end{proof}

\medskip
We recall $X_{n,m}, 1\leq n \leq N$  defined in \eqref{as:3} for $m\geq 0$. 
Replacing $X_i$ with $X_{i,m}$ in the definition of $\tilde{C}_N$ we define
\begin{align*}
\tilde{C}_{N,m}(t,s)=\sum_{i=-\infty}^\infty K\left(\frac{i}{h}\right)\tilde{\gamma}_i^{(m)}(t,s),
\end{align*}
where
\begin{displaymath}
\tilde{\gamma}^{(m)}_i(t,s)=
\left\{
\begin{array}{ll}
\displaystyle \frac{1}{N}\sum_{j=1}^{N-i}X_{j,m}(t)X_{j+i,m}(s),\quad &i\geq 0\\
\vspace{.3cm}
\displaystyle \frac{1}{N}\sum_{j=1-i}^N X_{j,m}(t)X_{j+i,m}(s), \quad &i<0
\end{array}
\right.
\end{displaymath}
and
$$
\tilde{Z}_{N,m}(t,s)=\tilde{C}_{N,m}(t,s)-E\tilde{C}_{N,m}(t,s).
$$
\begin{lemma}\label{f-lem} If \eqref{as:1}--\eqref{h-1} are satisfied,  then we have
\beq\label{fi-1}
\lim_{m \to \infty}\| C_m-C\|=0
\eeq
where
$$
C_m(t,s)=\sum_{\ell=-m}^{m}EX_{0,m}(t)X_{\ell,m}(s).
$$
Also, as $m\to \infty$,
\beq\label{fi-2}
 \int C_m(t,t)dt\;\to\;\int C(t,t)dt,
\eeq
\beq\label{fi-3}
\intt \left(  \sum_{\ell=-\infty}^\infty EX_0(t)X_{\ell,m}(s)\right) \left(\sum_{j=-\infty}^\infty E X_{0,m}(t)X_\ell(s)\right)dtds \;\to\;\| C\|^2,
\eeq
and
\beq\label{fi-4}
\int \sum_{\ell=-\infty}^\infty EX_0(t)X_{\ell, m}(t)dt \;\to\;\int C(t,t)dt,
\eeq
\end{lemma}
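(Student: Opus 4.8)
The plan is to reduce all four statements to a single estimate: that the $L^2$-distance between the full covariance kernel and its $m$-dependent truncation, measured entry-by-entry along the autocovariance sequence, vanishes as $m\to\infty$. The basic building block is the elementary inequality
\[
\|\gamma_\ell - E X_{0,m}(\cdot)X_{\ell,m}(\cdot)\| \le \|E(X_0-X_{0,m})(\cdot)X_\ell(\cdot)\| + \|E X_{0,m}(\cdot)(X_\ell - X_{\ell,m})(\cdot)\|,
\]
each term of which is bounded by a product of an $L^2$-norm and an approximation coefficient via Cauchy--Schwarz, e.g. the first is $\le (E\|X_0-X_{0,m}\|^2)^{1/2}(E\|X_\ell\|^2)^{1/2} \le c_m (E\|X_0\|^2)^{1/2}$ after using stationarity and $(E\|X_0-X_{0,m}\|^2)^{1/2}\le c_m$ (Jensen applied to \eqref{as:3}). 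I would also record that for $|\ell|>m$ the ``$m$-dependence'' forces $E X_{0,m}(\cdot)X_{\ell,m}(\cdot)=0$, so the tail of $C_m$ coincides with a tail of $C$ only up to these approximation errors. Summability of $\sum_m c_m$ (which follows from \eqref{as:3}, since $\sum_k w(\sum_{m\ge k}c_m)<\infty$ and $w$ is regularly varying and positive forces $\sum_{m\ge k}c_m\to 0$, hence in particular $\sum_m c_m<\infty$) is the engine that makes every series below absolutely convergent and uniformly small in $m$.

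For \eqref{fi-1}, I would split $C - C_m = \sum_{|\ell|\le m}(\gamma_\ell - EX_{0,m}(\cdot)X_{\ell,m}(\cdot)) + \sum_{|\ell|>m}\gamma_\ell$; the second piece is a genuine tail of the convergent series $\sum_\ell\|\gamma_\ell\|<\infty$ (which holds under \eqref{as:1}--\eqref{as:3} by the cited Appendix A.2 of Horv\'ath et al (2013)), hence $\to 0$, while the first piece is bounded by $\sum_{|\ell|\le m}\|\gamma_\ell - EX_{0,m}(\cdot)X_{\ell,m}(\cdot)\|$. Here I would need a mild refinement: for a fixed $\ell$ each summand goes to $0$ as $m\to\infty$, but to handle the growing number of terms I would bound the summand two ways — by the crude $c_m(E\|X_0\|^2)^{1/2}$ for the ``bulk'' $|\ell|\le m^{1/2}$ (say), giving $O(m^{1/2}c_m)$, which is $o(1)$ by summability of $c_m$, and for $m^{1/2}<|\ell|\le m$ by $\|\gamma_\ell\|$ plus a comparable approximation term, both tails of convergent series. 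Statement \eqref{fi-4} is \eqref{fi-1} specialized to the diagonal $s=t$ followed by integration (using that integration against $dt$ over $[0,1]^2$ restricted to the diagonal is dominated by the $L^2$ norm via Cauchy--Schwarz with the constant function), and \eqref{fi-2} is the same with $C_m$ replaced by its own diagonal; both are immediate corollaries.

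Statement \eqref{fi-3} is the most delicate and I expect it to be the main obstacle, because it involves a product of two kernels, each of which is a \emph{full} bi-infinite sum mixing original and $m$-truncated variables, namely $A_m(t,s):=\sum_\ell EX_0(t)X_{\ell,m}(s)$ and $B_m(t,s):=\sum_\ell EX_{0,m}(t)X_\ell(s)$. The strategy is to show $\|A_m - C\|\to 0$ and $\|B_m-C\|\to 0$ first — each is handled exactly as \eqref{fi-1}, since $EX_0(\cdot)X_{\ell,m}(\cdot) - \gamma_\ell = EX_0(\cdot)(X_{\ell,m}-X_\ell)(\cdot)$ has norm $\le c_m(E\|X_0\|^2)^{1/2}$ uniformly in $\ell$, and $EX_0(\cdot)X_{\ell,m}(\cdot)$ vanishes for $|\ell|>m$ only after shifting, so again one controls a bulk term $O(m^{1/2}c_m)$ plus tails — and then to pass to the product via $\iint A_mB_m - \iint C^2 = \iint (A_m-C)B_m + \iint C(B_m-C)$, bounding each by Cauchy--Schwarz using the uniform $L^2$-boundedness of $A_m$, $B_m$ (which follows once the convergence is established, or directly from $\sum_\ell\|\gamma_\ell\|<\infty$ plus a uniform-in-$m$ approximation bound). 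The one genuine subtlety is the uniform control of $A_m$ and $B_m$ over $m$: I would establish $\sup_m\|A_m\|<\infty$ and $\sup_m\|B_m\|<\infty$ en route, by the same bulk/tail splitting with the summability of $c_m$ giving a bound independent of $m$.
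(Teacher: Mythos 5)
Your handling of \eqref{fi-1} and \eqref{fi-3} is sound, and for \eqref{fi-3} it is genuinely different from the paper's route: the paper attacks the double sum $\sum_{\ell,j}\alpha_{\ell,j,m}$ directly, splitting the index set into four regions $r_{1,m},\dots,r_{4,m}$ and estimating each, whereas you factor the problem through the single-kernel statements $\|A_m-C\|\to0$, $\|B_m-C\|\to0$ and then pass to the product via $\intt A_mB_m-\|C\|^2=\intt(A_m-C)B_m+\intt C(B_m-C)$ and Cauchy--Schwarz. This is cleaner and more modular, and your worry about uniform $L^2$ bounds on $A_m,B_m$ is moot since $\|B_m\|\le\|B_m-C\|+\|C\|$ is automatically bounded once the convergence is in hand. (One small remark common to both arguments: controlling the bulk term requires $mc_m\to0$, or your variant $m^{1/2}c_m\to0$; bare summability of $c_m$, which is all you cite, does not imply this, but \eqref{as:3} with $w$ regularly varying and $w(t)/t^{1/3}\to0$ forces $c_m=o(m^{-3})$, so both versions are fine. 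The paper silently uses the bound $2(2m+1)(E\|X_0\|^2E\|X_0-X_{0,m}\|^2)^{1/2}\to0$ in exactly the same way.)

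The genuine gap is your one-line dispatch of \eqref{fi-2} and \eqref{fi-4}. You deduce them from the $L^2([0,1]^2)$ convergences by ``restricting to the diagonal,'' asserting that integration over $\{s=t\}$ is dominated by the $L^2([0,1]^2)$ norm via Cauchy--Schwarz. This is false: the diagonal has planar measure zero, so $\|f_m\|\to0$ in $L^2([0,1]^2)$ gives no control over $\int f_m(t,t)\,dt$ (take $f_m(t,s)=m^{1/4}\mathbf{1}\{|t-s|\le1/m\}$, for which $\|f_m\|\to0$ while $\int f_m(t,t)\,dt=m^{1/4}\to\infty$). Hence \eqref{fi-2} and \eqref{fi-4} are not corollaries of \eqref{fi-1} and \eqref{fi-3}; they must be proved by rerunning your term-by-term estimates on the trace functionals directly, replacing the kernel bounds by $\left|\int EX(t)Y(t)\,dt\right|=|E\la X,Y\ra|\le(E\|X\|^2E\|Y\|^2)^{1/2}$, together with the summability $\sum_\ell|E\la X_0,X_\ell\ra|<\infty$ coming from the same $c_\ell$ estimates. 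The repair is entirely routine and parallel to what you already wrote --- which is presumably why the paper itself disposes of these two claims with ``similar arguments'' --- but the deduction as you stated it does not work.
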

\begin{proof} By definition we have
$$
C(t,s)=\sum_{\ell=-\infty}^{-m-1}EX_0(t)X_\ell(s)+\sum^{\infty}_{\ell=m+1}EX_0(t)X_\ell(s)+\sum^{m}_{\ell=-m}EX_0(t)X_\ell(s).
$$
Due to the fact that $C(t,s)$ is in $L^2([0,1]^2,\Rn)$, it follows that
\begin{align*}
\left\|\sum^{\infty}_{\ell=m+1}EX_0(t)X_\ell(s)\right\|\to 0,  \quad \mbox{as}\quad m\to \infty
\end{align*}
and
$$
\left\| \sum_{\ell=-\infty}^{-m-1}EX_0(t)X_\ell(s)   \right\|\to \infty, \quad \mbox{as}\quad m\to \infty.
$$
Clearly,
\begin{align*}
EX_0(t)X_\ell(s)&-EX_{0,m}(t)X_{\ell,m}(s)\\
&=EX_{0}(t)X_\ell(s) -  EX_{0,m}(t)X_\ell(s)+EX_{0,m}(t)X_\ell(s)-EX_{0,m}(t)X_{\ell,m}(s)
\end{align*}
and therefore by \eqref{as:3} and stationarity we conclude
$$
\|EX_0(t)X_\ell(s)-EX_{0,m}(t)X_{\ell,m}(s)\|\leq 2 (E\|X_0\|^2E\|X_0-X_{0,m}\|^2)^{1/2}.
$$
Hence
\begin{align*}
\left\|\sum^{m}_{\ell=-m}(EX_0(t)X_\ell(s)-EX_{0,m}(t)X_{\ell,m}(s))\right\|\leq 2(2m+1)(E\|X_0\|^2E\|X_0-X_{0,m}\|^2)^{1/2}\to 0,
\end{align*}
as $m\to \infty$, completing the proof of \eqref{fi-1}.
Similar arguments give \eqref{fi-2}.\\
To prove \eqref{fi-3} we first define
$$ r_{1,m}=\{|\ell|>m, |j|\leq m\}, \;\;r_{2, m}=\{|\ell|\leq m, |j|>m\},\;\;   r_{3, m}=\{|\ell|> m, |j|>m\}
$$
and
$$r_{4,m}=\{|\ell|, |j|\leq m\}.
$$
 Let
$$
\alpha_{\ell, j, m}(t,s)=EX_0(t)X_{\ell, m}(s)EX_{0,m}(t)X_j(s)-a_\ell(t,s)a_j(t,s),\;\mbox{where   }a_\ell (t,s)=EX_0(t)X_\ell(s).
$$
 For all $\ell>m$ we have that $EX_0(t)X_{\ell,m}(s)=0$ and therefore by the Cauchy--Schwarz inequality
\begin{align*}
\left|\intt \sum_{r_{1,m,1}}\alpha_{\ell, j, m}(t,s)dtds\right|& = \left|  \intt \sum_{r_{1,m,1}}   a_\ell(t,s)a_j(t,s) dtds \right|\\
&\leq 2 E\|X_0\|^2\sum_{\ell>m}^\infty(E\|X_0-X_{0,\ell}\|^2)^{1/2}\sum_{j=0}^\infty(E\|X_0-X_{0,j}\|^2)^{1/2}\\
&\to 0, \quad \mbox{as}\;\;m\to \infty,
\end{align*}
where $r_{1,m,1}=\{\ell>m, |j|\leq m\}$. On the set $r_{1,m,2}=\{\ell<-m, |j|\leq m\}$ we write by the independence of $X_{0,\ell}$ and $X_{\ell, m}$ that
\begin{align*}
\sum_{r_{1,m,2}}\intt &|EX_0(t)X_{\ell,m}(s)EX_{0,m}(t)X_j(s)|dtds\\
&= \sum_{r_{1,m,2}}\intt |E(X_{0,m}(t)-X_{-\ell}(t))X_{\ell,m}(s)EX_{0,m}(t)X_j(s)|dtds\\
&\leq \sum_{r_{1,m,2}}(E\|X_0-X_{0,-\ell}\|^2)^{1/2}(E\|X_0\|^2)^{3/2}\\
&\leq (2m+1)(E\|X_0\|^2)^{3/2} \sum_{\ell=m}^\infty(E\|X_0-X_{0,\ell}\|^2)^{1/2}.
\end{align*}
It follows similarly that
$$
\intt \sum_{r_{1,m,2}}  | a_\ell(t,s)a_j(t,s)| dtds \leq (2m+1) (E\|X_0\|^2)^{3/2} \sum_{\ell=m}^\infty(E\|X_0-X_{0,\ell}\|^2)^{1/2}
$$
resulting in
$$
\left|\intt \sum_{r_{1,m}}\alpha_{\ell, j, m}(t,s)dtds\right|\to 0, \;\;\;\mbox{as}\;\;m\to \infty
$$
via \eqref{as:3}. Similar arguments give for $i=2,3,4$  that
$$
\left|\intt_{r_{i,m}}\alpha_{\ell, j, m}(t,s)dtds\right|\to 0, \quad \mbox{as}\;\;m\to \infty.
$$
Observing that
$$
\intt \sum_{\ell, i=-\infty}^\infty a_\ell(t,s)a_i(t,s)dtds=\|C\|^2,
$$
the proof of \eqref{fi-3} is complete. The proof of \eqref{fi-4} goes along the lines of \eqref{fi-3}.
\end{proof}

\medskip
 \begin{lemma}\label{sec-lem}
  If \eqref{as:1}--\eqref{h-1} are satisfied, then we have
 \beq\label{sec-1}
 \lim_{m\to \infty}\limsup_{N\to \infty}\frac{N}{h}E\intt (\tilde{Z}_N(t,s)-\tilde{Z}_{N,m}(t,s))^2dtds=0.
 \eeq
 Also, for each $m\geq 1$
 \beq\label{sec-2}
\lim_{N\to \infty}\frac{N}{h}\intt \mbox{\rm var}(\tilde{Z}_N(t,s))dtds=\left(\|C\|^2 +\left(\int C(t,t)dt\right)^2 \right)\int_{-1}^1 K^2(u)du,
 \eeq
 \beq\label{sec-3}
\lim_{N\to \infty}\frac{N}{h}\intt \mbox{\rm var}(\tilde{Z}_{N,m}(t,s))dtds  =\left(\|C_m\|^2 +\left(\int C_m(t,t)dt\right)^2 \right)\int_{-1}^1 K^2(u)du  ,
 \eeq
 and
 \begin{align}\label{sec-4}
\lim_{N\to \infty}&\frac{N}{h}\intt \mbox{\rm cov}(\tilde{Z}_N(t,s), \tilde{Z}_{N,m}(t,s) )dtds\\
&=\Biggl\{  \intt \left(  \sum_{\ell=-\infty}^\infty EX_0(t)X_{\ell,m}(s)\right) \left(\sum_{j=-\infty}^\infty E X_{0,m}(t)X_j(s)\right)dtds\notag\\
  &\hspace{1 cm}   +  \left( \int \sum_{\ell=-\infty}^\infty EX_0(t)X_{\ell, m}(t)dt     \right)^2\Biggl\}\int_{-1}^1 K^2(u)du.\notag
 \end{align}
 \end{lemma}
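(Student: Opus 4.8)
The four statements all reduce to computing (limits of) second moments of linear-in-the-data quadratic forms, so the plan is to set up one master covariance computation and then specialize it. Write $\tilde{Z}_N(t,s) = \sum_i K(i/h)(\tilde\gamma_i(t,s) - E\tilde\gamma_i(t,s))$, so that $\intt \mbox{var}(\tilde Z_N)$ is a double sum over lags $i,\ell$ of $K(i/h)K(\ell/h)$ times $\intt \mbox{cov}(\tilde\gamma_i(t,s),\tilde\gamma_\ell(t,s))\,dtds$. Expanding $\tilde\gamma_i$ as $N^{-1}\sum_j X_j(t)X_{j+i}(s)$, each such covariance is $N^{-2}$ times a quadruple sum of fourth-order cross-moments $E[X_j(t)X_{j+i}(s)X_k(t)X_{k+\ell}(s)] - E[X_j(t)X_{j+i}(s)]E[X_k(t)X_{k+\ell}(s)]$. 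By stationarity the leading (diagonal, $i=\ell$, $j=k$) contribution is the one that survives after multiplying by $N/h$: it produces $\frac{N}{h}\cdot N^{-2}\cdot N \sum_i K^2(i/h)$ times the $t,s$-integral of $E[X_0(t)X_i(s)X_0(t)X_i(s)] - (EX_0(t)X_i(s))^2$, and the Riemann-sum approximation $h^{-1}\sum_i K^2(i/h)\to \int_{-1}^1 K^2(u)du$ (legitimate by \eqref{k-3}, $K$ Lipschitz with compact support) extracts the $\int_{-1}^1 K^2(u)du$ factor. The covariance $i=\ell$, $j\neq k$ cross terms, and the genuinely fourth-cumulant pieces, must be shown to be $o(h)$ after division by $N$: this is where the weak-dependence assumption \eqref{as:3} and the fourth-moment bound \eqref{as:2} enter, through the standard device of replacing one factor $X_0$ by the independent copy $X_{0,m}$ (as already used in \eqref{simple-2} and throughout Lemma 4.3) to show that sums of $|{\rm cov}|$ over shifted indices are summable. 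The remaining main term, after identifying which pairings of the four indices give $\sum_\ell a_\ell a_\ell$-type quantities, assembles into $\|C\|^2 + (\int C(t,t)dt)^2$: the first piece comes from the pairing that ties the two "$t$" arguments together and the two "$s$" arguments together across the two $\gamma$'s, the second from the pairing that ties $t$ with $s$ within each $\gamma$, giving $\int C(t,t)dt$ squared. This proves \eqref{sec-2}.

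For \eqref{sec-3} the computation is identical with every $X_j$ replaced by $X_{j,m}$; since the $X_{j,m}$ are $m$-dependent, the lag sums are finite rather than infinite, and one obtains $\|C_m\|^2 + (\int C_m(t,t)dt)^2$ with $C_m$ as defined in Lemma 4.3. For the cross-covariance \eqref{sec-4} one runs the same bilinear expansion on $\mbox{cov}(\tilde\gamma_i(t,s),\tilde\gamma_\ell^{(m)}(t,s))$; now the surviving diagonal term involves mixed moments $E[X_0(t)X_i(s)X_{0,m}(t)X_{\ell,m}(s)]$, and the relevant pairings produce, respectively, $\intt\big(\sum_\ell EX_0(t)X_{\ell,m}(s)\big)\big(\sum_j EX_{0,m}(t)X_j(s)\big)\,dtds$ and $\big(\int \sum_\ell EX_0(t)X_{\ell,m}(t)\,dt\big)^2$, again times $\int_{-1}^1 K^2$. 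Finally, \eqref{sec-1} is a direct consequence of \eqref{sec-2}, \eqref{sec-3}, \eqref{sec-4} via the polarization identity
$$
\frac{N}{h}E\intt (\tilde Z_N - \tilde Z_{N,m})^2 = \frac{N}{h}\intt\big[\mbox{var}(\tilde Z_N) - 2\mbox{cov}(\tilde Z_N,\tilde Z_{N,m}) + \mbox{var}(\tilde Z_{N,m})\big]\,dtds,
$$
together with Lemma 4.3: letting $N\to\infty$ gives $(\|C\|^2 + (\int C(t,t)dt)^2) - 2(\|C\|^2 + (\int C(t,t)dt)^2) + (\|C_m\|^2 + (\int C_m(t,t)dt)^2)$ in the limit, by \eqref{fi-3}, \eqref{fi-4} identifying the cross term and \eqref{fi-1}, \eqref{fi-2} identifying the $C_m$ term; letting then $m\to\infty$ this tends to $0$.

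The main obstacle is the error analysis in the variance computations — showing that everything except the diagonal pairing is negligible after multiplication by $N/h$. Concretely one must control: (i) the fourth cumulant sum $\sum_{i,\ell}|K(i/h)K(\ell/h)| \sum_{j,k}|{\rm cum}(X_j(t),X_{j+i}(s),X_k(t),X_{k+\ell}(s))|$-type quantities, which requires translating the $L^4$-$m$-approximability \eqref{as:3} into summability of fourth-order joint cumulants; and (ii) the non-diagonal second-order pairings, where one has products like $a_{?}(t,s)a_{?}(t,s)$ summed over a two-dimensional index set constrained by the kernel support, and one needs the extra averaging from the $\sum_j$ over sample indices to beat the $O(h^2)$ size of the kernel double-sum so the net contribution is $o(h^2/N)\cdot (N/h) = o(h/N)\cdot\ldots = o(1)$. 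Both are handled by the now-familiar "swap in an independent copy and invoke $\sum_m c_m<\infty$" trick from Lemmas 4.2--4.3, but bookkeeping the index ranges (the $N-i$ upper limits, the $j\neq k$ versus $j=k$ split, edge effects near $j=1$ and $j=N$) is the delicate part; these edge corrections are $O(h/N)$ relative and vanish. I would organize the write-up by first proving \eqref{sec-2} in full detail and then indicating that \eqref{sec-3} and \eqref{sec-4} follow by the identical argument with the stated substitutions.
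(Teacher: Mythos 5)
Your overall architecture matches the paper's: expand $\mbox{var}(\tilde Z_N)$ as a double sum over lags of $K(\ell/h)K(g/h)\,\mbox{cov}(\tilde\gamma_\ell,\tilde\gamma_g)$, split the fourth moments into pairings plus a cumulant, kill the cumulant part, extract $\int_{-1}^1K^2$ by a Riemann sum, and obtain \eqref{sec-1} from \eqref{sec-2}--\eqref{sec-4} by polarization together with Lemma \ref{f-lem}. However, your identification of the main term is wrong, and this is a genuine gap rather than a bookkeeping issue. You claim the surviving contribution is the doubly diagonal one ($i=\ell$, $j=k$), producing $h^{-1}\sum_iK^2(i/h)\intt\bigl(E[X_0^2(t)X_i^2(s)]-a_i^2(t,s)\bigr)dtds$, and that the $j\neq k$ pairing terms are $o(1)$. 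That diagonal expression converges to $\bigl(\int\gamma_0(t,t)dt\bigr)^2\int_{-1}^1K^2(u)du$, not to $\bigl(\|C\|^2+(\int C(t,t)dt)^2\bigr)\int_{-1}^1K^2(u)du$. The long--run covariance $C=\sum_r\gamma_r$ can only arise by summing over \emph{all} sample--index differences $r=j-k$: after the substitution $u=r+g$, $v=r-\ell$ the pairing term $a_{r+g}(t,s)a_{r-\ell}(t,s)$ contributes $\sum_{u,v}a_ua_v\cdot h^{-1}\sum_rK(\frac{u-r}{h})K(\frac{v-r}{h})\to\|C\|^2\int K^2$ (mass concentrated near the lag anti--diagonal $g\approx-\ell$), while $a_{r}(t,t)a_{r+g-\ell}(s,s)$ contributes $(\int C(t,t)dt)^2\int K^2$ (mass near $g\approx\ell$, all $r$). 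So the terms you dismiss as negligible in item (ii) of your error analysis are precisely the ones carrying the limit; your plan as written would terminate at the wrong constant. This is exactly why the paper introduces the multiplicity counts $\bar\varphi_N(r,\ell,g)$ and performs the $(u,v,r)$ change of variables in $q_{2,N}$ and $q_{3,N}$.

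A second, smaller gap: the cumulant bound \eqref{psi-1} is not merely "the independent--copy trick." A direct application of the swapping inequalities yields bounds of the form $h^2\sum_{p\geq\xi}c_p+h\xi\sum_{\ell\geq\xi}c_\ell+\xi^3$ for a free truncation level $\xi$, and making this $o(h)$ requires choosing $\xi=1/w(1/h)$ and invoking the regular--variation hypothesis $w(t)/t^{1/3}\to0$ from \eqref{as:3} (via \eqref{bs:1}). Your plan does not indicate how the $\xi^3$ term is to be beaten by $h$, which is the only place in the lemma where the precise form of condition \eqref{as:3}, beyond mere summability of the $c_m$, is used.
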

\begin{proof}  By a simple calculation
\begin{align*}
\frac{N}{h}&E\intt \left(\tilde{Z}_N(t,s)-\tilde{Z}_{N,m}(t,s)\right)^2dtds\\
&=\frac{N}{h}\intt \mbox{var}(\tilde{Z}_N(t,s))dtds
+\frac{N}{h}\intt \mbox{var}(\tilde{Z}_{N,m}(t,s))dtds\\
&\quad -2\frac{N}{h}\intt \mbox{cov}(\tilde{Z}_N(t,s), \tilde{Z}_{N,m}(t,s))dtds,
\end{align*}
and hence \eqref{sec-1} follows from Lemma \ref{f-lem} and \eqref{sec-2}--\eqref{sec-4}.\\
We recall  $a_\ell(t,s)=EX_0(t)X_\ell(s)$ and let
\begin{align*}
\psi_{\ell,r,p}(t,s)=E[X_0(t)&X_\ell(s)X_r(t)X_p(s)]\\
&-a_\ell(t,s)a_{p-r}(t,s)-a_r(t,t)a_{p-\ell}(s,s)-a_p(t,s)a_{r-\ell}(t,s).
\end{align*}
As the first step in the proof of \eqref{sec-2} we show that
\beq\label{psi-1}
\frac{1}{h}\sum_{g,\ell=-h}^h\sum_{r=-(N-1)}^{N-1}\left|\intt \psi_{\ell, r, r+g}(t,s)dtds
\right|\to 0,\quad \mbox{as}\;\;N\to \infty.
\eeq
It is easy to see that
\begin{align}\label{psi-2}
\frac{1}{h}\sum_{g,\ell=-h}^h&\sum_{r=-(N-1)}^{N-1}\left|\intt \psi_{\ell, r, r+g}(t,s)dtds\right|\\
&=\frac{1}{h}\sum_{\ell=0}^h\sum_{g=0}^h\sum_{r=0}^{N-1}\left|\intt \psi_{\ell, r, r+g}(t,s)dtds\right|\notag \\
&\hspace{1cm}+\frac{1}{h}\sum_{\ell=0}^h\sum_{g=0}^h\sum_{r=-(N-1)}^{-1}\left|\intt \psi_{\ell, r, r+g}(t,s)dtds\right| \notag\\
&\hspace{1cm}+\cdots
+\frac{1}{h}\sum_{\ell=-h}^{-1}\sum_{g=-h}^{-1}\sum_{r=-(N-1)}^{-1}\left|\intt \psi_{\ell, r, r+g}(t,s)dtds\right|,\notag
\end{align}
where the right hand side contains eight terms corresponding to the combinations of the indices $\ell, g$ and $r$ taking either nonnegative or negative values.  Due to stationarity, we only consider the first term. In the summation of $\psi_{\ell, r, r+g}$, we consider three cases: $\ell$ is less than $r$, $\ell$ is between $r$ and $r+g$, or $\ell$ is larger than $r+g$. \\
Let $R_1=\{(\ell, g ,r): \ell<r, 0\leq \ell, g\leq h, 0\leq r \leq N-1\}, R_2=\{(\ell, g ,r): r\leq \ell \leq r+g, 0\leq \ell, g\leq h, 0\leq r \leq N-1
\},$ and $R_3=\{(\ell, g ,r):  r+g<\ell, 0\leq \ell, g\leq h, 0\leq r \leq N-1\}$. Clearly,
$$
\sum_{\ell=0}^h\sum_{g=0}^h\sum_{r=0}^{N-1}\left|\intt \psi_{\ell, r, r+g}(t,s)dtds\right|\leq U_{1,N}+U_{2,N}+U_{3, N},
$$
where
$$
U_{1,N}=\sum_{R_1}\left|\intt \psi_{\ell, r, r+g}(t,s)dtds\right|,\quad
U_{2,N}=\sum_{R_2}\left|\intt \psi_{\ell, r, r+g}(t,s)dtds\right|,
$$
and
$$
U_{3,N}=\sum_{R_3}\left|\intt \psi_{\ell, r, r+g}(t,s)dtds\right|.
$$
Using the definition of $\psi_{\ell, r, r+g}$ we write
\begin{align}\label{wro-1}
\sum_{R_1}&\left|\intt \psi_{\ell, r, r+g}(t,s)dtds\right|\\
&\leq \sum_{R_1}\left|  \intt  a_r(t,t)a_{r+g-\ell}(s,s)dtds \right|\notag\\
&\hspace{0.5 cm}+
\sum_{R_1}\left|\intt a_{r-\ell}(t,s)a_{r+g}(t,s)dtds\right|\notag\\
&\hspace{0.5 cm}+          \sum_{R_1}\left|    \intt [EX_0(t)X_\ell(s)X_r(t)X_{r+g}(s)-a_\ell(t,s)a_g(t,s)  ]dtds\right|.  \notag
\end{align}
By the inequality (A.3) in Horv\'ath and Rice (2014) and the fact that for any random variable $(E\zeta^2)^{1/2}\leq (E\zeta^4)^{1/4}$ we get that
\beq\label{wro-2}
\left|  \intt  a_r(t,t)a_{r+g-\ell}(s,s)dtds \right|\leq E\|X_0\|^2c_rc_{r+g-\ell}
\eeq
and
\beq\label{wro-3}
\left|\intt a_{r-\ell}(t,s)a_{r+g}(t,s)dtds\right|\leq E\|X_0\|^2 c_{r-\ell}c_{r+g},
\eeq
where, we recall from \eqref{as:3},
$$
c_\ell=(E\|X_0-X_{0,\ell}\|^4)^{1/4}.
$$
Combining \eqref{wro-2} with the definition of $R_1$ we conclude
\begin{align}\label{wro-4}
\sum_{R_1}\left|  \intt  a_r(t,t)a_{r+g-\ell}(s,s)dtds \right|&\leq E\|X_0\|^2\sum_{R_1} c_{r}c_{r+g-\ell}\\
&\leq E\|X_0\|^2 \sum_{\ell=0}^h\sum_{r=\ell+1}^{N-1}c_r\sum_{g=0}^hc_{r+g-\ell}\notag\\
&\leq E\|X_0\|^2 \left(\sum_{\ell=0}^\infty\sum_{r=\ell}^\infty c_r\right)\sum_{g=0}^\infty c_g.\notag
\end{align}
Similarly,
\begin{align}\label{wro-5}
\sum_{R_1}\left|\intt a_{r-\ell}(t,s)a_{r+g}(t,s)dtds\right|&\leq E\|X_0\|^2\sum_{R_1}c_{r-\ell}c_{r+g}\\
&\leq E\|X_0\|^2\sum_{\ell=0}^h\sum_{r=\ell}^{N-1}\sum_{g=0}^hc_{r-\ell}c_{r+g}\notag\\
&\leq E\|X_0\|^2 \sum_{\ell=0}^h\sum_{r=\ell}^{N-1}\sum_{p=\ell}^{\infty}c_{r-\ell}c_p\notag\\
&\leq E\|X_0\|^2 \left(\sum_{\ell=0}^\infty\sum_{p=\ell}^{\infty}c_p\right)\sum_{r=0}^{\infty}c_{r}.\notag
\end{align}
Let $1\leq \xi=\xi(N)\leq h$ be a sequence of real numbers which will be defined below. We write $R_1=R_{1,1}\cup R_{1,2}$, where $R_{1,1}=\{(\ell, g, r)\in R_1: r-\ell>\xi\}$ and $R_{1,2}=\{ (\ell, g, r)\in R_1: r-\ell\leq \xi\}$. It follows from (A.9) of Horv\'ath and Rice (2014) that there is a constant $A_1$, depending only on the distribution of $X_0$ such that for all $(\ell, g, r)\in R_{1,1}$
$$
\left|    \intt [EX_0(t)X_\ell(s)X_r(t)X_{r+g}(s)-a_\ell(t,s)a_g(t,s)  ]dtds\right|\leq A_1(c_{r-\ell}+c_{r+g-\ell}).
$$
Thus we get that
\begin{align}\label{ww-1}
\sum_{R_{1,1}} &\left|\intt  [EX_0(t)X_\ell(s)X_r(t)X_{r+g}(s)-a_\ell(t,s)a_g(t,s)  ]dtds   \right|\\
&\leq A_1\sum_{\ell=0}^h\sum_{r=\ell+\xi}^N\sum_{g=\ell-r}^h (c_{r-\ell}+c_{r+g-\ell})\notag\\
&\leq 2A_1h^2\sum_{p=\xi}^\infty c_p.\notag
\end{align}
To obtain an upper bound when the summation is over $R_{1,2}$ we write $R_{1,2}=\cup_{i=1}^3R_{1,2,i}$ where $R_{1,2,1}=\{(\ell, g ,r)\in R_{1,2}: \ell>\xi\},
R_{1,2,2}=\{(\ell, g ,r)\in R_{1,2}: g>\xi\}$ and $R_{1,2,3}=\{(\ell, g ,r)\in R_{1,2}: \ell, g\leq \xi\}$. It follows from  (A.4) in Horv\'ath and Rice (2014) that there is a constant $A_2$ depending only on $X_0$ such that
$$
\left|   \intt EX_0(t)X_\ell(s)X_r(t)X_{r+g}(s)dtds  \right|\leq A_2\min (c_\ell, c_g)\quad \mbox{for all}\quad (\ell, g, r)\in R_1.
$$
Thus we have
\begin{align}\label{ww-2}
\sum_{R_{1,2,1}}&\left|   \intt EX_0(t)X_\ell(s)X_r(t)X_{r+g}(s)dtds  \right|\\
&\leq A_2 \sum_{\ell=\xi+1}^h\sum_{r=\ell}^{\ell+\xi}\sum_{g=\ell-r}^h c_\ell \notag\\
&\leq A_2h \xi \sum_{\ell=\xi}^\infty c_\ell. \notag
\end{align}
Similarly,
\begin{align}\label{ww-3}
\sum_{R_{1,2,2}}&\left|   \intt EX_0(t)X_\ell(s)X_r(t)X_{r+g}(s)dtds  \right|
\leq A_2h \xi \sum_{\ell=\xi}^\infty c_\ell.
\end{align}
It follows from the  definitions of $R_{1,2,3}$ and $R_{1,2}$ that $R_{1,2,3}\subseteq \{0\leq \ell, g \leq \xi, 0\leq r \leq 2\xi\}$, so we have with some constant $A_3$ that
\begin{align}\label{ww-4}
\sum_{R_{1,2,3}}&\left|   \intt EX_0(t)X_\ell(s)X_r(t)X_{r+g}(s)dtds  \right|
\leq A_3  \xi^3.
\end{align}
Similar but somewhat easier arguments show
\begin{align}\label{ww-5}
\sum_{R_{1,2}}\left| \intt a_\ell(t,s)a_g(t,s)dtds\right|\leq A_4\left\{h \xi \sum_{\ell=\xi}^\infty c_\ell+
\xi^3\right\}.
\end{align}
with some constant $A_4$. If  \eqref{as:3} holds, then $\sum_{i=1}^\infty c_i<\infty$ and
\beq\label{ww-6}
\ell w\left(\sum_{i=\ell}^\infty c_i\right)\to 0,\;\;\mbox{as  }\ell\to \infty.
\eeq
Also $w^{-1}(x)$ exists for small enough $x$ (cf.\ Bingham et al (1987), pp.\ 28 and 29), and $w^{-1}(x)/x^3\to \infty$ as $x\to 0$. Using theorem 1.5.12 of Bingham et al (1987) we get that \eqref{ww-6} is equivalent to
\begin{align}\label{bs:1}
\frac{1}{w^{-1}(1/\ell)}\sum_{i=\ell}^\infty c_i\to 0,\;\;\mbox{as  }\ell\to \infty.
\end{align}
Therefore, with the choice of 
$$
\xi=\frac{1}{w(1/h)}
$$
in \eqref{ww-1}--\eqref{ww-5} we obtain that
\begin{align}\label{wro-6}
\frac{1}{h}\sum_{R_{1}} &\left|\intt  [EX_0(t)X_\ell(s)X_r(t)X_{r+g}(s)-a_\ell(t,s)a_g(t,s)  ]dtds   \right|\to 0.
\end{align}
Putting together \eqref{wro-4}, \eqref{wro-5} and \eqref{wro-6} we conclude
\beq\label{ww-10}
\frac{1}{h}\sum_{R_1}\left|\intt \psi_{\ell, r, r+g}(t,s)dtds
\right|\to 0.
\eeq
Similar arguments show that  \eqref{ww-10} remains true if the domain of summation $R_1$ is replaced with  $R_2$ or $R_3$ and hence
$$
\frac{1}{h} \sum_{\ell=0}^h\sum_{g=0}^h\sum_{r=0}^{N-1}\left| \intt \psi_{\ell, r, r+g}(t,s)dtds\right|\to 0.
$$
With minor modifications of the arguments above one can verify that the remaining seven terms in \eqref{psi-2} also tend to 0, as $N\to \infty$.\\
Now we show that \eqref{psi-1} implies \eqref{sec-2}. By a simple calculation using \eqref{as:1}  and \eqref{til-g} we get
\begin{align*}
N\mbox{cov}&(\tilde{\gamma}_\ell(t,s), \tilde{\gamma}_g(t,s))\\
&=\frac{1}{N}\Biggl\{\sum_{i=\max(1, 1-\ell)}^{\min(N,N-\ell)}\sum_{j=\max(1,1-g)}^{\min(N,N-g)}EX_i(t)X_{i+\ell}(s)X_j(t)X_{j+g}(s)\\
&\hspace{3 cm}-(N-|\ell|)(N-|g|)a_\ell(t,s)a_g(t,s)\Biggl\}\\
&=\frac{1}{N}\sum_{i=\max(1, 1-\ell)}^{\min(N,N-\ell)}\sum_{j=\max(1,1-g)}^{\min(N,N-g)}\biggl(\psi_{\ell, j-i, j-i+g}(t,s)\\
&\hspace{3 cm}+a_{j-i+g}(t,s)a_{j-i-\ell}(t,s)+a_{j-i}(t,t)a_{j-i+g-\ell}(s,s)\biggl).
\end{align*}
Notice that the summand in the last formula depends only on the difference $j-i$. Let $\varphi_N(r,\ell, g)$ denote the  cardinality of the set
$\{ (i,j): j-i=r, \max (1, 1-\ell)\leq i \leq \min (N,N-\ell), \max(1, 1-g)\leq j \leq \min(N,N-g)\}$, i.e.\ $\varphi_N(r, \ell, g)$ is the number of pairs of indices $i,j$ in the sum so that $j-i=r$. Clearly, $\varphi_N(r, \ell, g)\leq N$. Also, $\varphi_N(r, \ell, g)\geq N-2(|\ell|+|r|+|g|)$, since $\{(i, i+r): \max (|r|,1-\ell+|r|, 1-g+|r|)\leq i\leq \min (N-|r|, N-g-|r|, N-\ell-|r|)\}\subseteq \{(i,j): j-i=r, \max (1, 1-\ell)\leq i \min (N, N-\ell), \max (1, 1-g)\leq j\leq \min (N, N-g)\}$. Using the notation
\beq\label{varphidef}
\bar{\varphi}_N(r, \ell, g)=\varphi_N(r, \ell, g)/N
\eeq
 we can write
\begin{align*}
N\mbox{cov}&(\tilde{\gamma}_\ell(t,s), \tilde{\gamma}_g(t,s))\\
&=\sum_{r=-(N-1)}^{N-1}\bar{\varphi}_N(r, \ell, g)\left\{\psi_{\ell, r, r+g}(t,s)+a_{r+g}(t,s)a_{r-\ell}(t,s)+a_r(t,t)a_{r+g-\ell}(s,s)\right\}.
\end{align*}
It follows that
\begin{align*}
\frac{N}{h}\mbox{var}(\tilde{C}_N(t,s))=\frac{N}{h}\sum_{g,\ell=-h}^{h}K(g/h)K(\ell/h)\mbox{cov}&(\tilde{\gamma}_\ell(t,s), \tilde{\gamma}_g(t,s))\\
&=q_{1,N}(t,s)+q_{2,N}(t,s)+q_{3,N}(t,s),
\end{align*}
where
\begin{align*}
q_{1,N}(t,s)&=\frac{1}{h}\sum_{g,\ell=-h}^{h} \sum_{r=-(N-1)}^{N-1}K(g/h)K(\ell/h)\bar{\varphi}_N(r, \ell, g)\psi_{\ell, r, r+g}(t,s),\\
q_{2,N}(t,s)&=\frac{1}{h}\sum_{g,\ell=-h}^{h } \sum_{r=-(N-1)}^{N-1}K(g/h)K(\ell/h)\bar{\varphi}_N(r, \ell, g)a_{r+g}(t,s)a_{r-\ell}(t,s),\\
q_{3,N}(t,s)&=\frac{1}{h}\sum_{g,\ell=-h}^{h } \sum_{r=-(N-1)}^{N-1}K(g/h)K(\ell/h)\bar{\varphi}_N(r, \ell, g)a_r(t,t)a_{r+g-\ell}(s,s).
\end{align*}
We start with $q_{2,N}$. Let $\varepsilon>0$. By a change of variables we have
$$
q_{2,N}(t,s)=\frac{1}{h}\sum_{|u|,|v|\leq h+N-1}\sum_{r=b_1}^{b_2}K\left(\frac{u-r}{h}\right)K\left(\frac{v-r}{h}\right)\bar{\varphi}_N(r, r-v, u-r)a_u(t,s)a_v(t,s),
$$
where
\beq\label{b1-def}
b_1=b_1(u,v,N)=\max (u-h, v-h, -(N-1))
\eeq
and
\beq\label{b2-def}
b_2=b_2(u,v,N)=\min (u+h, v+h, N-1).
\eeq
If
$$
q_{2,N}^{(M)}(t,s)=\frac{1}{h}\sum_{|u|,|v|\leq M}\sum_{r=b_1}^{b_2}K\left(\frac{u-r}{h}\right)K\left(\frac{v-r}{h}\right)\bar{\varphi}_N(r, r-v, u-r)a_u(t,s)a_v(t,s),
$$
then we have
\begin{align*}
|q_{2,N}&(t,s)-q_{2,N}^{(M)}(t,s)|\\
&\leq \frac{1}{h}\sum_{u,v\in\Theta_{N,M}}\;\sum_{r=b_1}^{b_2}\left|K\left(\frac{u-r}{h}\right)K\left(\frac{v-r}{h}\right)\bar{\varphi}_N(r, r-v, u-r)a_u(t,s)a_v(t,s)\right|,
\end{align*}
where $\Theta_{N,M}=\{u,v: |u|, |v|\leq h+N-1, \max(|u|, |v|)\geq M\}$. By assumption \eqref{k-2}, the number of terms in $r$ such that $b_1(u,v,N)\leq r \leq b_2(u,v,N)$ and $K((u-r)/h)K((v-r)/h)\neq 0$ cannot exceed $2h$ for any
$u,v$. Since $|\bar{\varphi}_N|\leq 1$, we conclude
\beq\label{kcs0}
|q_{2,N}(t,s)-q_{2,N}^{(M)}(t,s)|\leq 2 \sup_{|x|\leq 1}K^2(x)\sum_{u,v\in\Theta_{N,M}}|a_u(t,s)a_v(t,s)|.
\eeq
The Cauchy-Schwarz inequality yields
\begin{align}\label{kcs}
\intt& |q_{2,N}(t,s)-q_{2,N}^{(M)}(t,s)|dtds\\
&\leq 2 \sup_{|x|\leq 1}K^2(x)\left[ \left( \sum_{|u|\geq M}\|a_u\|\right)^2+2\left(\sum_{|u|>M}\|a_u\|\right)\left(\sum_{v=-\infty}^\infty\|a_v\|\right)
\right]<\varepsilon/4,\notag
\end{align}
by taking $M$ sufficiently large. We recall that  $N-2(r+\ell+g)\leq \varphi(r, \ell, g)\leq N$. If $|u|, |v|\leq M, b_1(u,v,N)\leq r \leq b_2(u,v,N)$ hold,  then  $|r|\leq M+h$ and hence for such $u,v$ and $r$ we also have
$
|\varphi(r, r-v, u-r)-N|\leq 2|r+r-v+u-r|\leq 2(|r|+|u|+|v|)\leq 2(h+3M),
$
resulting in
that $|\bar{\varphi}_N(r, r-v, u-r)-1|\leq 2(3M+h)/N$.
Using  \eqref{h-1}, one can establish along the lines of the proof of \eqref{kcs}
\begin{align}\label{kcs-1}
\intt &\left|q_{2,N}^{(M)}(t,s)-\frac{1}{h}\sum_{|u|, |v|\leq M}\sum_{r=b_1}^{b_2}K\left(\frac{u-r}{h}\right)K\left(\frac{v-r}{h}\right)a_u(t,s)a_v(t,s)\right|dtds\\
&<\varepsilon/4\notag
\end{align}
for all large enough $N$. By \eqref{k-3} and \eqref{h-1}, for any $\eta>0$ we have
$$
\sup_{|u|,|v|\leq M}\sup_r \left|K\left(\frac{u-r}{h}\right)K\left(\frac{v-r}{h}\right)-K^2\left(\frac{r}{h}\right)
\right|<\eta,
$$
when $N$ is sufficiently large. Since we can take $\eta>0$ as small as we wish, it holds for all large enough $N$ that
\begin{align}\label{kcs-3}
\frac{1}{h}\sum_{|u|,|v|\leq M} &\sum_{r=b_1}^{b_2}\intt \left|\left[ K\left(\frac{u-r}{h}\right)K\left(\frac{v-r}{h}\right)-K^2\left(\frac{r}{h}\right)\right]
a_u(t,s)a_v(t,s)\right|dtds\\
&<\varepsilon/4.\notag
\end{align}
Clearly, according to the definition of a Riemann integral
$$
\sup_{|u|, |v|\leq M}\left|\frac{1}{h}\sum_{r=b_1(u,v,N)}^{b_2(u,v,N)}K^2\left(\frac{r}{h}\right)-\int_{-1}^1 K^2(z)dz\right|\to 0,\quad\mbox{as    } N\to \infty.
$$
Using the definition of $C(t,s)$ one can easily see via the Lebesgue dominated convergence theorem that
\begin{align*}
\intt \sum_{|u|,|v|\leq M}a_u(t,s)a_v(t,s)dtds=\intt \left(\sum_{|u|\leq M}a_u(t,s)\right)^2dtds\to \|C\|^2,
\end{align*}
as $M\to \infty$. Thus we get that for all $N$ and $M$ sufficiently large
\begin{align}\label{rie-1}
\Biggl|\frac{1}{h} \intt & \sum_{|u|,|v|\leq M} \sum_{r=b_1}^{b_2}K^2\left(\frac{r}{h}\right)a_u(t,s)a_v(t,s)dtds -\|C\|^2\int_{-1}^1 K^2(z)dz\Biggl|
\leq \varepsilon/4.
\end{align}
Combining \eqref{kcs0}--\eqref{rie-1} we conclude
\beq\label{li-2}
\lim_{N\to\infty}\intt q_{2,N}(t,s)dtds=\|C\|^2\int_{-1}^1 K^2(z)dz.
\eeq
Observing that
$$
\intt \sum_{|u|, |v|\leq M}a_u(t,t)a_v(s,s)dtds=\left(\int \sum_{|u|\leq M} a_u(t,t)dt \right)^2\;\;\to\;\;
\left(\int C(t,t)dt\right)^2,
$$
as $M\to \infty$,
minor modifications of the proof of \eqref{li-2} yield
\beq\label{li-3}
\lim_{N\to\infty}\intt q_{3,N}(t,s)dtds=\left(\int C(t,t)dt\right)^2\int_{-1}^1 K^2(z)dz.
\eeq
Finally, by \eqref{psi-1}
\beq\label{li-1}
\left|\intt q_{1,N}(t,s)dtds\right|\leq \frac{1}{h}\sup_{|x|\leq c}K^2(x) \sum_{g,\ell=-h}^h\sum_{r=-(N-1)}^{N-1}\left|\intt \psi_{\ell, r, r+g}(t,s)dtds\right|
\to 0,
\eeq
as $N\to \infty$. The result in  \eqref{sec-2}  now follows from \eqref{li-2}--\eqref{li-1}.\\
Clearly, \eqref{sec-3} is a special case of \eqref{sec-2}.\\
Let
\begin{align*}
\psi_{\ell, r, p}^{(m)}(t,s)=EX_0(t)X_\ell(s)&X_{r,m}(t)X_{p,m}(s)-a_\ell(t,s)a_{p-r,m}(t,s)\\
&-a^{(2)}_{r,m}(t,t)a_{p-\ell, m}^{(2)}(s,s)-a^{(2)}_{p,m}(t,s)a^{(1)}_{r-\ell, m}(t,s),
\end{align*}
where
$$a_{\ell, m}(t,s)=EX_{0,m}X_{\ell,m}(s), a_{\ell,m}^{(1)}(t,s)=EX_{0,m}(t)X_\ell(s)\;\; \mbox{and}\;\;a_{\ell,m}^{(2)}(t,s)=EX_{0}(t)X_{\ell, m}(s).
 $$
 Under the conditions of the Theorem \ref{main-1} we have that
\begin{align}\label{mdep-1}
\frac{1}{h}\sum_{g,\ell=-h}^h\sum_{r=-(N-1)}^{N-1}\left|\intt \psi_{\ell, r, r+g}^{(m)}(t,s)dtds\right|\;\;\to\;\; 0,\quad{as}\;\;\;N\to \infty,
\end{align}
 along the lines of \eqref{psi-1}.   It follows from the definitions of $\tilde{Z}_N$ and
$\tilde{Z}_{N,m}$ that
\begin{align*}
\frac{N}{h}E\tilde{Z}_N(t,s)\tilde{Z}_{N,m}(t,s)=\frac{N}{h}\sum_{\ell=-h}^h\sum_{k=-h}^hK\left(\frac{\ell}{h}\right)K\left(\frac{k}{h}\right)
\mbox{cov}(\tilde{\gamma}_\ell(t,s), \tilde{\gamma}_{g,m}(t,s)).
\end{align*}
Also,
\begin{align*}
N\mbox{cov}&(\tilde{\gamma}_\ell(t,s), \tilde{\gamma}_{g,m}(t,s))\\
&=\frac{1}{N}\Biggl\{\sum_{i=\max(1, 1-\ell)}^{\min(N,N-\ell)}\sum_{j=\max(1,1-g)}^{\min(N,N-g)}\psi^{(m)}_{\ell, j-i, j-i+g}(t,s)\\
&\hspace{3 cm}+a_{j-i+g,m}^{(2)}(t,s)a_{j-i-\ell,m}^{(1)}(t,s)+a_{j-i,m}^{(2)}(t,t)a_{j-i+g-\ell}^{(2)}(s,s)\Biggl\}.
\end{align*}
Following the proof of \eqref{li-1} one can show that \eqref{mdep-1} implies
\begin{align*}
\frac{1}{h}\left|\intt  \sum_{\ell=-h}^h\sum_{k=-h}^h \sum_{i=\max(1, 1-\ell)}^{\min(N,N-\ell)}\sum_{j=\max(1,1-g)}^{\min(N,N-g)}
K\left(\frac{\ell}{h}\right)K\left(\frac{k}{h}\right) \psi^{(m)}_{\ell, j-i, j-i+g}(t,s)dtds  \right|\;\to\;0,
\end{align*}
as $N\to \infty$. Along the lines of \eqref{li-2} and \eqref{li-3} we get that
\begin{align*}
\frac{1}{h}&\sum_{\ell=-h}^h\sum_{k=-h}^h \sum_{i=\max(1, 1-\ell)}^{\min(N,N-\ell)}\sum_{j=\max(1,1-g)}^{\min(N,N-g)}
K\left(\frac{\ell}{h}\right)K\left(\frac{k}{h}\right)\intt a_{j-i+g,m}^{(2)}(t,s)a_{j-i-\ell,m}^{(1)}(t,s)dtds\\
&\to\;\; \intt \left(  \sum_{\ell=-\infty}^\infty EX_0(t)X_{\ell,m}(s)\right) \left(\sum_{j=-\infty}^\infty E X_{0,m}(t)X_j(s)\right)dtds\int_{-1}^1 K^2(u)du
\end{align*}
and
\begin{align*}
\frac{1}{h}&\sum_{\ell=-h}^h\sum_{k=-h}^h \sum_{i=\max(1, 1-\ell)}^{\min(N,N-\ell)}\sum_{j=\max(1,1-g)}^{\min(N,N-g)}
K\left(\frac{\ell}{h}\right)K\left(\frac{k}{h}\right)\intt a_{j-i,m}^{(2)}(t,t)a_{j-i+g-\ell}^{(2)}(s,s)dtds\\
 &\to\;\;   \left( \int \sum_{\ell=-\infty}^\infty EX_0(t)X_{\ell, m}(t)dt     \right)^2\int_{-1}^1 K^2(u)du,
 \end{align*}
 completing  the proof of \eqref{sec-4}.
\end{proof}

\subsection{Approximations with finite dimensional processes}\label{fine-dim} Based on the result in Section \ref{lem-mdep},    we now assume that
\beq\label{dim-1}
X_i(t), -\infty<i <\infty \;\mbox{is an  } m\mbox{--dependent stationary sequence},
\eeq
\beq\label{dim-2}
EX_i(t)=0\;\;\mbox{and}\;\;E\|X_0\|^4<\infty.
\eeq
First we replace the bivariate cumulant function $\psi_{\ell, r, p}$  of Section \ref{lem-mdep} with the four variate version
\begin{align*}
\chi_{\ell, r, p}(t,s, \tp, \spr)=E[X_0(t)X_\ell(s)X_r(\tp)X_p(\spr)]&-a_\ell(t,s)a_{p-r}(\tp, \spr)-a_r(t,\tp)a_{p-\ell}(s,\spr)\\
&-a_p(t,\spr)a_{r-\ell}(s,\tp).
\end{align*}
\begin{lemma}\label{appr-le-1} If \eqref{dim-1} and  \eqref{dim-2} are satisfied, then we have
$$
\left\|\sum_{\ell, r, p=-\infty}^\infty \chi_{\ell, r, p}(t,s,\tp, \spr)\right\|<\infty.
$$
\end{lemma}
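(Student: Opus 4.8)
The plan is to recognise $\chi_{\ell,r,p}(t,s,\tp,\spr)$ as the fourth joint cumulant of the four scalar random variables $X_0(t),X_\ell(s),X_r(\tp),X_p(\spr)$. Indeed, since the $X_i$ are centred by \eqref{dim-2}, stationarity gives $EX_0(t)X_\ell(s)=a_\ell(t,s)$, $EX_r(\tp)X_p(\spr)=a_{p-r}(\tp,\spr)$, $EX_0(t)X_r(\tp)=a_r(t,\tp)$, $EX_\ell(s)X_p(\spr)=a_{p-\ell}(s,\spr)$, $EX_0(t)X_p(\spr)=a_p(t,\spr)$ and $EX_\ell(s)X_r(\tp)=a_{r-\ell}(s,\tp)$, so that for every fixed $(t,s,\tp,\spr)$
$$
\chi_{\ell,r,p}(t,s,\tp,\spr)=\mathrm{cum}\big(X_0(t),X_\ell(s),X_r(\tp),X_p(\spr)\big).
$$
From this identification the statement reduces to the classical vanishing property of cumulants together with an elementary moment bound.

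First I would use $m$-dependence to show that only finitely many of the $\chi_{\ell,r,p}$ are nonzero. If the index multiset $\{0,\ell,r,p\}$ can be partitioned into two nonempty blocks $B_1,B_2$ with $\min\{|i-j|:i\in B_1,\ j\in B_2\}>m$, then the family $\{X_i:i\in B_1\}$ is independent of $\{X_j:j\in B_2\}$, hence so are the corresponding families of scalars, and a joint cumulant vanishes as soon as its arguments split into two mutually independent groups (cf.\ Brillinger (2001)). Ordering the four indices as $i_{(1)}\le i_{(2)}\le i_{(3)}\le i_{(4)}$, whenever some consecutive gap $i_{(k+1)}-i_{(k)}$ exceeds $m$ such a partition exists; therefore $\chi_{\ell,r,p}\equiv 0$ unless all three consecutive gaps are at most $m$, which forces $i_{(4)}-i_{(1)}\le 3m$ and, since $0$ is one of the four indices, $\max(|\ell|,|r|,|p|)\le 3m$. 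Consequently the sum $\sum_{\ell,r,p}\chi_{\ell,r,p}$ contains at most $(6m+1)^3$ nonzero terms, so $\|\sum_{\ell,r,p}\chi_{\ell,r,p}\|\le\sum_{\ell,r,p}\|\chi_{\ell,r,p}\|$ with finitely many summands, and it only remains to bound a single $\|\chi_{\ell,r,p}\|$ in $L^2([0,1]^4,\Rn)$.

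For the three product terms this is immediate: $\|a_\ell(t,s)a_{p-r}(\tp,\spr)\|=\|a_\ell\|\,\|a_{p-r}\|$, with $\|a_\ell\|\le E\|X_0\|^2$ by Cauchy--Schwarz and stationarity, and likewise for the other two products. For the leading term I would apply Cauchy--Schwarz inside the expectation, pairing $X_0(t)X_r(\tp)$ against $X_\ell(s)X_p(\spr)$, to get
$$
\big(EX_0(t)X_\ell(s)X_r(\tp)X_p(\spr)\big)^2\le E\big[X_0(t)^2X_r(\tp)^2\big]\,E\big[X_\ell(s)^2X_p(\spr)^2\big];
$$
integrating over $(t,s,\tp,\spr)$, the right side factors into $E\big[\|X_0\|^2\|X_r\|^2\big]\,E\big[\|X_\ell\|^2\|X_p\|^2\big]$, and one further application of Cauchy--Schwarz on the expectations together with stationarity bounds this by $\big(E\|X_0\|^4\big)^2$, i.e.\ $\|E[X_0\otimes X_\ell\otimes X_r\otimes X_p]\|\le E\|X_0\|^4<\infty$ by \eqref{dim-2}. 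Hence every nonzero $\|\chi_{\ell,r,p}\|\le E\|X_0\|^4+3(E\|X_0\|^2)^2$, and summing the finitely many of them finishes the proof.

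There is really only one point that needs a little care, and I would flag it as the ``main obstacle'' of an otherwise routine argument: the $L^2([0,1]^4,\Rn)$ bound on $E[X_0\otimes X_\ell\otimes X_r\otimes X_p]$. A naive H\"older bound with exponent $4$ would require control of $E\int_0^1 X_0(t)^4\,dt$, which is \emph{not} implied by $E\|X_0\|^4<\infty$ (for functions on $[0,1]$ one has $(\int X_0^2)^2\le\int X_0^4$, the wrong direction); pairing the four factors into two $L^2$ factors as above is exactly what keeps the estimate within the moment hypothesis \eqref{dim-2}. The $m$-dependence combinatorics of the first step is standard and presents no difficulty.
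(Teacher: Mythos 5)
Your proof is correct and follows essentially the same route as the paper: $m$-dependence forces all but finitely many $\chi_{\ell,r,p}$ to be zero (the paper verifies this term by term on ordered index regions, while you invoke the vanishing of a joint cumulant whose arguments split into independent groups, which handles the cancellation cases uniformly), after which the finiteness of the norm of the sum is immediate. Your explicit Cauchy--Schwarz estimate giving $\|\chi_{\ell,r,p}\|\le E\|X_0\|^4+3(E\|X_0\|^2)^2$ for the surviving terms simply makes precise a step the paper leaves implicit.
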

\begin{proof} Using stationarity arguments, we need to prove only that
$$
\left\|\sum_{\ell, r, p=0}^\infty \chi_{\ell, r, p}(t,s,\tp, \spr)\right\|<\infty.
$$
Let $D=\{(\ell, r, p): \ell, r, p\geq 0\}$ and $D_1=\{(\ell, r, p): 0\leq \ell\leq  r\leq p\}$. If $(\ell, r, p)\in D_1$ and $p-r>m$, then $\chi_{\ell, r, p}=0$ since each term in the definition of $\chi_{\ell, r, p}$ equals $0$ in this case due to the $m$--dependence. Similarly, if $\ell>m$ or $r-\ell>m$,  then $\chi_{\ell, r, p}$ equals $0$ for all $(\ell, r, p)\in D_1$.  
Therefore $\{(\ell, r,p)\in D_1: \chi_{\ell, r, p}\neq 0\}\subseteq \{(\ell, r, p):0\leq  \ell\leq m, 0\leq r\leq 2m, p-r\leq m\}$ and the last set has no more than $6(m+1)^3$ elements. Hence
\beq\label{mstep-1}
\left\|\sum_{D_1} \chi_{\ell, r, p}(t,s,\tp, \spr)\right\|<\infty
\eeq
since only finitely many terms are different from zero in the sum. The other subsets of $D$ can be handled similarly so the details are omitted.
\end{proof}
\medskip

Let
$$
L_N(t,s,\tp, \spr)=\frac{N}{h}\sum_{\ell, g=-h}^{h}K\biggl(\frac{\ell}{h}\biggl)K\biggl(\frac{g}{h}\biggl)\mbox{var}(\tilde{\gamma}_\ell(t,s), \tilde{\gamma}_g(\tp,\spr)).
$$
\begin{lemma}\label{appr-le-2} If \eqref{dim-1} and  \eqref{dim-2} are satisfied, then we have that
$$
\|L_N-L\|\to 0.
$$
where $L$ is defined in \eqref{L-def}.
\end{lemma}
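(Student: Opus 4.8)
The plan is to reprise the proof of \eqref{sec-2} in Lemma~\ref{sec-lem}, now carrying four arguments $(t,s,\tp,\spr)$ and controlling the $\|\cdot\|$ norm on $L^2([0,1]^4)$. Exactly as in the two--variate computation, one writes, with $\bar\varphi_N$ as in \eqref{varphidef},
\begin{align*}
N\,\mbox{cov}(\tilde\gamma_\ell(t,s),\tilde\gamma_g(\tp,\spr))
&=\sum_{r=-(N-1)}^{N-1}\bar\varphi_N(r,\ell,g)\bigl\{\chi_{\ell,r,r+g}(t,s,\tp,\spr)\\
&\qquad+a_{r+g}(t,\spr)a_{r-\ell}(s,\tp)+a_r(t,\tp)a_{r+g-\ell}(s,\spr)\bigr\},
\end{align*}
with $\chi$ the four--variate cumulant introduced in Section~\ref{fine-dim}. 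Multiplying by $\tfrac1h K(\ell/h)K(g/h)$ and summing over $|\ell|,|g|\le h$ decomposes $L_N=q_{1,N}+q_{2,N}+q_{3,N}$, the three pieces corresponding to the three summands above, in direct analogy with the decomposition behind \eqref{sec-2}.

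The cumulant term $q_{1,N}$ is the easy part, since by \eqref{dim-1}--\eqref{dim-2} we are already in the $m$--dependent setting. As observed in the proof of Lemma~\ref{appr-le-1} (and reflecting the vanishing of joint cumulants of $m$--dependent variables), $\chi_{\ell,r,p}$ is nonzero only when, after reordering, the gaps between consecutive elements of $\{0,\ell,r,p\}$ are all at most $m$; hence only finitely many $\chi_{\ell,r,p}$ fail to vanish, each has finite norm by Cauchy--Schwarz and \eqref{dim-2}, and so $\sum_{\ell,r,p}\|\chi_{\ell,r,p}\|<\infty$. Using $|\bar\varphi_N|\le1$, $|K|\le\sup_{|x|\le1}|K(x)|$, and the reindexing $(\ell,g,r)\mapsto(\ell,r,p=r+g)$,
$$
\|q_{1,N}\|\le\frac1h\Bigl(\sup_{|x|\le1}K^2(x)\Bigr)\sum_{\ell,r,p}\|\chi_{\ell,r,p}\|=O(1/h)\to0,
$$
so none of the delicate $\xi=1/w(1/h)$ truncation needed for \eqref{psi-1} in the general case is required here.

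For $q_{2,N}$ and $q_{3,N}$ I would run verbatim the chain of estimates \eqref{kcs0}--\eqref{rie-1}: truncate the lag indices of the $a_\ell$'s at a large $M$, bounding the tail by $\sum_\ell\|a_\ell\|<\infty$ (a finite sum here, as $a_\ell=0$ for $|\ell|>m$) together with the tensor identity $\|a_u(\cdot,\cdot)\,a_v(\cdot,\cdot)\|_{L^2([0,1]^4)}=\|a_u\|\,\|a_v\|$; replace $\bar\varphi_N$ by $1$ at cost $O((M+h)/N)\to0$ via \eqref{h-1}; replace $K((u-r)/h)K((v-r)/h)$ by $K^2(r/h)$ using the Lipschitz property \eqref{k-3}; and identify what remains as a Riemann sum of $\int_{-1}^1K^2(z)dz$ against an absolutely convergent double series of $a$'s. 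In $q_{3,N}$ the summand depends on the lags only through $r$ and $r+g-\ell$, so letting $h\to\infty$ produces $\bigl(\sum_{r,p}a_r(t,\tp)a_p(s,\spr)\bigr)\int_{-1}^1K^2(z)dz=C(t,\tp)C(s,\spr)\int_{-1}^1K^2(z)dz$; in $q_{2,N}$ the summand depends on $r+g$ and $r-\ell$, and after invoking the symmetry of $K$ from \eqref{k-2} to replace $\ell$ by $-\ell$ the same computation yields the complementary product term of $L$. Adding the three limits gives $L_N\to L$ in $L^2([0,1]^4)$.

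The only real work is bookkeeping: one must carry out the four--variable analogues of \eqref{kcs0}--\eqref{rie-1} while keeping the truncation level $M$ and the Riemann--sum error uniform in $(t,s,\tp,\spr)$, so the convergence holds in the $L^2([0,1]^4)$ norm and not merely in an integrated or pointwise sense; this is a routine, if lengthy, strengthening of the argument already carried out for \eqref{sec-2}. The one genuinely new ingredient is the absolute summability of the cumulant series, which in this $m$--dependent setting is immediate from Lemma~\ref{appr-le-1}, and which is why $q_{1,N}$ costs essentially no effort.
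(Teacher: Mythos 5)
Your proposal follows essentially the same route as the paper: the identical decomposition $L_N=q_{1,N}+q_{2,N}+q_{3,N}$ built from the four--variate cumulant $\chi$, with $\|q_{1,N}\|=O(1/h)$ via the absolute summability in Lemma \ref{appr-le-1}, and $q_{2,N},q_{3,N}$ treated by replacing $\bar{\varphi}_N$ by $1$, the shifted kernel products by $K^2(r/h)$ through the Lipschitz property \eqref{k-3}, and a Riemann--sum passage to $\int_{-1}^1K^2(z)dz$. The only cosmetic difference is that the paper skips your truncation at a large $M$ by using $a_j=0$ for $|j|>m$ to restrict the lag sums to $|u|,|v|\le m$ from the outset, which is precisely the simplification you acknowledge parenthetically.
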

\begin{proof}
Following the proof of Lemma \ref{sec-lem} we write
\begin{align*}
L_N(t,s,\tp, \spr)=q_{1,N}(t,s,\tp, \spr)+q_{2,N}(t,s,\tp, \spr)+q_{3,N}(t,s,\tp, \spr),
\end{align*}
where
\begin{align*}
q_{1,N}(t,s,\tp, \spr)&=\frac{1}{h}\sum_{g,\ell=-h}^{h}\sum_{r=-(N-1)}^{N-1} K\left(\frac{\ell}{h}\right)K\biggl(\frac{g}{h}\biggl)\bar{\varphi}_N(r, \ell, g) \chi_{\ell, r, r+g}(t,s,\tp, \spr),\\
q_{2,N}(t,s,\tp, \spr)&=\frac{1}{h}\sum_{g,\ell=-h}^{h}\sum_{r=-(N-1)}^{N-1} K\left(\frac{\ell}{h}\right)K\biggl(\frac{g}{h}\biggl)\bar{\varphi}_N(r, \ell, g) a_{r+g}(t, \spr)a_{r-\ell}(\tp, s),\\
q_{3,N}(t,s,\tp, \spr)&=\frac{1}{h}\sum_{g,\ell=-h}^{h}\sum_{r=-(N-1)}^{N-1} K\left(\frac{\ell}{h}\right)K\biggl(\frac{g}{h}\biggl)\bar{\varphi}_N(r, \ell, g) a_{r}(t, \tp)a_{r+g-\ell}( s, \spr),
\end{align*}
and $\bar{\varphi}_N(r, \ell, g)$ is defined an \eqref{varphidef}. If  we write
$$
L(t,s,\tp, \spr)=L^{(1)}(t,s,\tp, \spr)+L^{(2)}(t,s,\tp, \spr),
$$
where
$$
L^{(1)}(t,s,\tp, \spr)=C(t,s)C(\tp, \spr) \int_{-c}^c K^2(z)dz
$$
and
$$
L^{(2)}(t,s,\tp, \spr)=C(t,\tp)C(s,\spr) \int_{-c}^c K^2(z)dz,
$$
then by the triangle inequality we get
$$
\|L_N-L\|\leq \|q_{1,N}\|+\|q_{2,N}-L^{(1)}\|+\|q_{3,N}-L^{(2)}\|.
$$
Clearly,
$$
\|q_{1,N}\|\leq \frac{1}{h}\sup_{|x|\leq c}K^2(x)\left\| \sum_{\ell, r, q=-\infty}^\infty \chi_{\ell, r, q}\right\|\;\;\to\;\;0,\;\;\mbox{as}\;\;N\to \infty,
$$
on account of Lemma \ref{appr-le-1}. By assumption \eqref{h-1} for all large enough $N$ we have that $h+N-1 \geq m$. Since $a_j=0$ for all $|j|>m$,  by a change of variables we have for all large enough $N$
\begin{align*}
q_{2,N}(t,s,\tp, \spr)&=\frac{1}{h}\sum_{|u|, |v|<h+N}\sum_{r=b_1}^{b_2}K\left(\frac{u-r}{h}\right)K\left(\frac{v-r}{h}\right)\bar{\varphi}_N(r,r-v,u-r)a_u(t,\spr)a_v(\tp, s)\\
&=\frac{1}{h}\sum_{|u|, |v|\leq m}\sum_{r=b_1}^{b_2}K\left(\frac{u-r}{h}\right)K\left(\frac{v-r}{h}\right)\bar{\varphi}_N(r,r-v,u-r)a_u(t,\spr)a_v(\tp, s),
\end{align*}
where $b_1=b_1(u,v,N)$ and $b_2=b_2(u,v,N)$ are defined in \eqref{b1-def} and \eqref{b2-def}, respectively. Define
\begin{align*}
q_{2,N,1}(t,s,\tp, \spr)&=\frac{1}{h}\sum_{|u|, |v|\leq m}\sum_{r=b_1}^{b_2}K\left(\frac{u-r}{h}\right)K\left(\frac{v-r}{h}\right)a_u(t,\spr)a_v(\tp, s),\\
q_{2,N,2}(t,s,\tp, \spr)&=\frac{1}{h}\sum_{|u|, |v|\leq m}\sum_{r=b_1}^{b_2}K^2\left(\frac{r}{h}\right)a_u(t,\spr)a_v(\tp, s),\\
q_{2,N,3}(t,s,\tp, \spr)&=\frac{1}{h}\sum_{|u|, |v|\leq m}\sum_{r=-h}^{h}K^2\left(\frac{r}{h}\right)a_u(t,\spr)a_v(\tp, s).
\end{align*}
Since $|\bar{\varphi}_N(r,r-v,u-r)-1|\leq 2(3m+h)/N$ for all $1\leq r \leq N$ and $|u|, |v|\leq m$ we conclude by Fubini's theorem that as $N\to \infty$,
\begin{align*}
\|q_{2,N}-q_{2,N,1}\|
\leq \frac{2Q(3m+h)}{N}\sup_{|x|\leq 1}K^2(x)
\;\;\;\to\;\;0,
\end{align*}
where
$$
Q=\left(\intt \left(\sum_{|u|\leq m}|a_u(t,s')|\right)^2dtds'\intt\left(\sum_{|v|\leq m}|a_v(t',s)|\right)^2dt'ds\right)^{1/2}.
$$
Using  \eqref{k-3} one can find a constant $\eta$ such that
\beq\label{qq-k1}
\sup_{|u|\leq m}\left|K\left(\frac{u-r}{h}\right)-K\left(\frac{r}{h}\right)\right|\leq \eta\frac{m}{h}
\eeq
and therefore
\beq\label{qq-k2}
\sup_{|u|, |v|\leq m}\left|K\left(\frac{u-r}{h}\right)K\left(\frac{v-r}{h}\right)-K^2\left(\frac{r}{h}\right)\right|\leq 2\eta\frac{m}{h}\sup_{|x|\leq 1}|K(x)|.
\eeq
By \eqref{qq-k1} and \eqref{qq-k2} we obtain that
$$
\|q_{2,N,1}-q_{2,N,2}\|\leq 2\eta Q\frac{m}{h}\sup_{|x|\leq 1}|K(x)|\;\;\to\;\;0,\;\;\mbox{ as }\;\;N\to \infty.
$$
It follows from the definitions of $b_1=b_1(u,v,N)$ and  $b_2=b_2(u,v,N)$  in \eqref{b1-def} and \eqref{b2-def} that
$$
\|q_{2,N,2}-q_{2,N,3}\|\leq \frac{2Qm}{h}\sup_{|x|\leq 1}K^2(x)\;\;\to\;\;0,\;\;\mbox{ as }\;\;N\to \infty.
$$
Finally,
$$
\|q_{2,N,3}-L^{(1)}\|\leq Q\left|\frac{1}{h}\sum_{r=-h}^{h}K^2\left(\frac{r}{h}\right)-\int_{-1}^1 K^2(z)dz
\right|\;\;\to\;\;0,
$$
as $N\to\infty$, since $K$ is Riemann integrable. This also concludes the proof of
$$
\|q_{2,N}-L^{(1)}\|\;\;\to\;\;0,\;\;\mbox{ as }\;\;N\to \infty.
$$
Similar arguments yield
$$
\|q_{3,N}-L^{(2)}\|\;\;\to\;\;0,\;\;\mbox{ as }\;\;N\to \infty,
$$
completing the proof of Lemma \ref{appr-le-2}.
\end{proof}
\medskip
Let $\phi_i(t), 1\leq i <\infty$ be an orthonormal basis of $L^2([0,1],\Rn)$. By the Karhunen--Lo\'eve expansion we can write
$$
X_i(t)=\sum_{\ell=1}^\infty \la X_i, \phi_\ell\ra \phi_\ell(t).
$$
Define
$$
X_i^{(d)}(t)=\sum_{\ell=1}^d \la X_i, \phi_\ell\ra \phi_\ell(t)
$$
and correspondingly $\bar{Z}^{(d)}_N=\bar{C}^{(d)}_N(t,s)-E\bar{C}^{(d)}_N(t,s)$, where
\begin{align*}
\bar{C}^{(d)}_N(t,s)=\sum_{i=-\infty}^\infty K\left(\frac{i}{h}\right)\bar{\gamma}_i^{(d)}(t,s)
\end{align*}
with
\begin{displaymath}
\bar{\gamma}_i^{(d)}(t,s)=\tilde{\gamma}_{i,N}(t,s)=
\left\{
\begin{array}{ll}
\displaystyle \frac{1}{N}\sum_{j=1}^{N-i}X_j^{(d)}(t)X_{j+i}^{(d)}(s),\quad &i\geq 0\\
\vspace{.3cm}
\displaystyle \frac{1}{N}\sum_{j=1-i}^N X_j^{(d)}(t)X_{j+i}^{(d)}(s), \quad &i<0.
\end{array}
\right.
\end{displaymath}
It follows from the Karhunen--Lo\'eve theorem  that
\beq\label{d-01}
E\| X_0-X_0^{(d)}\|^2\to 0,\;\;\mbox{as}\;\;d\to \infty.
\eeq
Let
$$
C^{(d)}(t,s)=\sum_{\ell=-m}^m EX^{(d)}_0(t)EX^{(d)}_\ell(s).
$$
\begin{lemma}\label{appr-le-3} If \eqref{dim-1} and  \eqref{dim-2} are satisfied, then we have that
\beq\label{appr-4}
\lim_{d\to \infty}\limsup_{N\to \infty}\frac{N}{h}E\|\tilde{Z}_N-\bar{Z}_N^{(d)}\|^2=0.
\eeq
Also, for each $d \geq 1$,
 \beq\label{appr-31}
\lim_{N\to \infty}\frac{N}{h}\intt \mbox{\rm var}(\bar{Z}_{N}^{(d)}(t,s))dtds  =\left(\|C^{(d)}\|^2 +\left(\int C^{(d)}(t,t)dt\right)^2 \right)\int_{-c}^c K^2(u)du  ,
 \eeq
 and
 \begin{align}\label{appr-32}
\lim_{N\to \infty}&\frac{N}{h}\intt \mbox{\rm cov}(\tilde{Z}_N(t,s), \tilde{Z}_{N}^{(d)}(t,s) )dtds\\
&=\Biggl\{  \intt \left(  \sum_{\ell=-m}^m EX_0(t)X_{\ell}^{(d)}(s)\right) \left(\sum_{j=-m}^m E X_{0}^{(d)}(t)X_\ell(s)\right)dtds\notag\\
  &\hspace{1 cm}   +  \left( \int \sum_{\ell=-m}^m EX_0(t)X_{\ell}^{(d)}(t)dt     \right)^2\Biggl\}\int_{-c}^c K^2(u)du.\notag
 \end{align}
\end{lemma}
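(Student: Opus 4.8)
The three assertions are the finite--rank analogues of \eqref{sec-3}, \eqref{sec-4} and \eqref{sec-1} of Lemma \ref{sec-lem}, with the $m$--dependent truncation $X_{i,m}$ replaced by the Karhunen--Lo\'eve truncation $X_i^{(d)}$. The first point is that $X_i^{(d)}=\sum_{\ell=1}^d\la X_i,\phi_\ell\ra\phi_\ell$ is a measurable function of $X_i$ alone: hence under \eqref{dim-1} the sequence $\{X_i^{(d)}\}$ is again stationary and $m$--dependent, $EX_i^{(d)}=0$, and $E\|X_0^{(d)}\|^4\le E\|X_0\|^4<\infty$ by Bessel's inequality, so $\{X_i^{(d)}\}$ satisfies \eqref{dim-1}--\eqref{dim-2}, with long run covariance kernel $C^{(d)}$. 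Consequently \eqref{appr-31} is exactly the computation giving \eqref{sec-2} (equivalently \eqref{sec-3}) applied to $\{X_i^{(d)}\}$. For \eqref{appr-32} I would repeat the derivation of \eqref{sec-4} verbatim, replacing $X_{i,m}$ by $X_i^{(d)}$ throughout: one forms the analogue $\psi^{(d)}_{\ell,r,p}(t,s)$ of the mixed cumulant used there and decomposes $\frac{N}{h}\intt\mathrm{cov}(\tilde Z_N(t,s),\bar Z_N^{(d)}(t,s))\,dtds$ into a cumulant part and two product--of--second--moment parts, as in Lemma \ref{sec-lem}.

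A helpful simplification relative to Lemma \ref{sec-lem} is that we now work under the $m$--dependence \eqref{dim-1} from the outset. As in Lemma \ref{appr-le-1}, $m$--dependence forces $\psi^{(d)}_{\ell,r,r+g}$ to vanish unless $(\ell,r,g)$ ranges over a set of cardinality $O(m^3)$ independent of $N$; dividing this bounded cumulant sum by $h\to\infty$ gives $0$, so the delicate $\xi=1/w(1/h)$ truncation needed for \eqref{psi-1} is unnecessary here. The two product--of--second--moment parts are handled as in \eqref{li-2}--\eqref{li-3}: since the relevant mixed second moments $EX_0(t)X^{(d)}_\ell(s)$ and $EX^{(d)}_0(t)X_\ell(s)$ vanish for $|\ell|>m$, a change of variables reduces the sums to finite ones and the Riemann--sum argument identifies their limit as the right--hand side of \eqref{appr-32}.

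For \eqref{appr-4} I would expand
\begin{align*}
\frac{N}{h}E\|\tilde Z_N-\bar Z_N^{(d)}\|^2
&=\frac{N}{h}\intt\mathrm{var}(\tilde Z_N(t,s))\,dtds+\frac{N}{h}\intt\mathrm{var}(\bar Z_N^{(d)}(t,s))\,dtds\\
&\qquad-\frac{2N}{h}\intt\mathrm{cov}(\tilde Z_N(t,s),\bar Z_N^{(d)}(t,s))\,dtds
\end{align*}
and let $N\to\infty$ in each summand by \eqref{sec-2}, \eqref{appr-31} and \eqref{appr-32}, respectively; for fixed $d$ the left side then has a genuine limit. It remains to let $d\to\infty$. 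By the Karhunen--Lo\'eve property \eqref{d-01} together with the inequality $\|Ef(X)g(Y)\|\le(E\|f(X)\|^2E\|g(Y)\|^2)^{1/2}$ (inequality (A.3) in Horv\'ath and Rice (2014)), each of the second moments $EX^{(d)}_0(t)X^{(d)}_\ell(s)$, $EX_0(t)X^{(d)}_\ell(s)$, $EX^{(d)}_0(t)X_\ell(s)$ converges in $L^2$ to $a_\ell(t,s)=EX_0(t)X_\ell(s)$ as $d\to\infty$, for each fixed $\ell$; since only the $2m+1$ lags $|\ell|\le m$ are present, this yields $\|C^{(d)}-C\|\to0$, $\int C^{(d)}(t,t)\,dt\to\int C(t,t)\,dt$, and convergence in $L^2$ of both bracketed sums in \eqref{appr-32} to $C$. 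Hence, as $d\to\infty$, the $N$--limits of the second and third summands above both tend to $\left(\|C\|^2+\left(\int C(t,t)\,dt\right)^2\right)\int_{-c}^cK^2(z)\,dz$, which equals the $N$--limit of the first summand by \eqref{sec-2}; the three contributions cancel and \eqref{appr-4} follows.

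\textbf{Main obstacle.} The only real work is executing the \eqref{sec-4}--analogue \eqref{appr-32} carefully enough to identify its limit \emph{exactly}, because it is precisely the exact form of this limit---together with \eqref{sec-2} and \eqref{appr-31}---that produces the cancellation of the three terms in the variance decomposition as $d\to\infty$. The rest is routine bookkeeping, and it is lighter than in Lemma \ref{sec-lem} thanks to the standing $m$--dependence assumption \eqref{dim-1}.
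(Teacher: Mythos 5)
Your proposal is correct and follows essentially the same route as the paper: \eqref{appr-31} is obtained by applying the $m$--dependent variance computation (Lemma \ref{appr-le-2}) to the projected sequence $\{X_i^{(d)}\}$, \eqref{appr-32} by repeating the argument for \eqref{sec-4} in the simpler $m$--dependent setting, and \eqref{appr-4} by the variance--covariance expansion combined with Cauchy--Schwarz bounds of order $(E\|X_0-X_0^{(d)}\|^2)^{1/2}$ and the Karhunen--Lo\'eve convergence \eqref{d-01} to make the three limits cancel as $d\to\infty$.
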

\begin{proof}
Using the Cauchy--Schwarz inequality and stationarity we have
\begin{align}\label{cau-1}
&\intt \Biggl|E[(X_0(t)-X_0^{(d)}(t))X_i(s)]EX_0(t)X_j(s)\Biggl|dtds\\
&\leq \left\{\intt [E(X_0(t)-X_0^{(d)}(t))X_i(s)]^2dtds\right\}^{1/2}\left\{\intt [ EX_0(t)EX_j(s)]^2  dtds\right\}^{1/2}  \notag\\
&\leq \left\{\intt E(X_0(t)-X_0^{(d)}(t))^2EX_i^2(s)dtds\right\}^{1/2}
\left\{\intt  EX^2_0(t)EX_j^2(s)  dtds\right\}^{1/2}   \notag\\
&=(E\|X_0-X_0^{(d)}\|^2)^{1/2}(E\|X_0\|^2)^{3/2}\notag
\end{align}
and similarly
\begin{align}\label{cau-2}
\intt &\left| E[X_0^{(d)}(t)(X_i(s)-X_i^{(d)}(s))]EX_0^{(d)}(t)X_j^{(d)}(s)\right|dtds\\
&\leq (E\|X_0-X_0^{(d)}\|^2)^{1/2}(E\|X_0^{(d)}\|^2)^{3/2}\notag\\
&\leq (E\|X_0-X_0^{(d)}\|^2)^{1/2}(E\|X_0\|^2)^{3/2}. \notag
\end{align}
Hence by elementary calculations we conclude from these inequalities
\begin{align*}
\intt &\left|EX_0(t)X_i(s)EX_0(t)X_j(s)-EX_0^{(d)}(t)X_i^{(d)}(s)EX_0^{(d)}(t)X_j^{(d)}(s)
\right|dtds\\
 &\leq A (E\|X_0-X_0^{(d)}\|^2)^{1/2}(E\|X_0\|^2)^{3/2}.
\end{align*}
with some constant $A$.
Thus we get as $d\to \infty$ that
\begin{align}\label{d-11}
\left\| \sum_{\ell=-m}^m EX_0^{(d)}(t) EX_\ell^{(d)}(s) \right\|\;\;\to\;\;\left\| \sum_{\ell=-m}^m EX_0(t) EX_\ell(s) \right\|,
\end{align}
\begin{align}\label{d-12}
\int \sum_{\ell=-m}^m EX_0^{(d)}(t) EX_\ell^{(d)}(t)dt\;\;\to\;\;\int \sum_{\ell=-m}^m EX_0(t) EX_\ell(t)dt
\end{align}
\begin{align}\label{d-13}
\intt &\left(  \sum_{\ell=-m}^m EX_0(t)X_{\ell}^{(d)}(s)\right) \left(\sum_{j=-m}^m E X_{0}^{(d)}(t)X_\ell(s)\right)dtds\\
&\quad \quad\;\;\to\;\;\left\| \sum_{\ell=-m}^m EX_0(t) EX_\ell(s) \right\|,\notag
\end{align}
and
\begin{align}\label{d-13a}
\int \sum_{\ell=-m}^m EX_0(t)X_{\ell}^{(d)}(t)dt   \;\;\to\;\;\int \sum_{\ell=-m}^m EX_0(t) EX_\ell(t)dt.
\end{align}
On account of \eqref{d-01}, the result in \eqref{appr-4} follows from \eqref{sec-2}, \eqref{appr-31}, \eqref{appr-32} and \eqref{d-11}--\eqref{d-13a}.\\
Lemma \ref{appr-le-2} implies \eqref{appr-31}.
The proof of  \eqref{appr-32} goes along the lines of \eqref{sec-4} but it is much simpler since \eqref{psi-1} always satisfied for $m$--dependent random functions. Hence the details are omitted.
\end{proof}

\subsection{Normal approximation   in case of  finite dimensional $m$--dependent processes}\label{clt-fine-dim} Based on the result in Sections \ref{lem-mdep} and \ref{fine-dim}    we can and will assume in this section that
\beq\label{findim-1}
X_i(t), -\infty<i <\infty \;\mbox{is an  } m\mbox{--dependent} \;d\mbox{--dimensional stationary sequence},
\eeq
i.e.
\beq\label{findim-2}
X_i(t)=\sum_{\ell=1}^d\la X_i, \phi_\ell \ra\phi_\ell(t),
\eeq
where $\phi_\ell(t), \ell\geq 1$ is a basis of $L^2$. Let
\begin{align*}
C_N^*(t,s)=\sum_{i=-\infty}^\infty K\left(\frac{i}{h}\right){\gamma}^*_i(t,s)
\end{align*}
with for all $-N<i<N$
\begin{align*}
{\gamma}_i^*(t,s)=
\displaystyle \frac{1}{N}\sum_{j=1}^{N}X_j(t)X_{j+i}(s).
\end{align*}
First we show that the difference between $Z_N^*(t,s)=C_N^*(t,s)-EC_N^*(t,s)$ and $\tilde{Z}_N(t,s)$ is small.
\medskip

\begin{lemma}\label{fini-0} If \eqref{findim-1},\eqref{k-1}--\eqref{h-1} are satisfied,  then we have

$$
\frac{N}{h}\|Z_N^*-\tilde{Z}_N\|^2=o_P(1),\quad\mbox{as}\quad N\to\infty.
$$
\end{lemma}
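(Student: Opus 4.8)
The two estimators $\tilde{C}_N$ and $C_N^*$ differ only in the range of the inner summation: $\tilde\gamma_i$ sums over $j$ from $1$ to $N-i$ (or $1-i$ to $N$), whereas $\gamma_i^*$ sums over the full block $j=1,\dots,N$ regardless of $i$, so the extra terms in $\gamma_i^*$ involve $X_j(t)X_{j+i}(s)$ with $j+i$ outside $\{1,\dots,N\}$. The natural approach is to write
$$
Z_N^*-\tilde Z_N=\sum_{i=-\infty}^\infty K\!\left(\frac{i}{h}\right)\bigl(\delta_i-E\delta_i\bigr),\qquad \delta_i(t,s)=\gamma_i^*(t,s)-\tilde\gamma_i(t,s),
$$
where for $i\ge 0$, $\delta_i(t,s)=\tfrac1N\sum_{j=N-i+1}^{N}X_j(t)X_{j+i}(s)$ and analogously for $i<0$. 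Since $K$ is supported on $[-c,c]=[-1,1]$ (we assumed $c=1$), only $|i|\le h$ contribute, and for each such $i$ the block $\delta_i$ is an average of at most $|i|\le h$ summands. The goal is to show $(N/h)\,E\|Z_N^*-\tilde Z_N\|^2=o(1)$, which by Chebyshev gives the desired $o_P(1)$.

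\textbf{Key steps.} First I would expand $E\|Z_N^*-\tilde Z_N\|^2=\sum_{|i|,|k|\le h}K(i/h)K(k/h)\,\mathrm{cov}\bigl(\delta_i,\delta_k\bigr)$ in $L^2([0,1]^2)$, i.e.\ integrate $E[(\delta_i-E\delta_i)(t,s)(\delta_k-E\delta_k)(t,s)]$ over $(t,s)$. Each $\delta_i$ is $\tfrac1N$ times a sum of $|i|$ products $X_j X_{j+i}$; using $m$-dependence, finiteness of $E\|X_0\|^4$, and the Cauchy--Schwarz inequality (as used repeatedly in Lemmas \ref{sec-lem} and \ref{appr-le-2}), one gets $\bigl|\intt \mathrm{cov}(\delta_i,\delta_k)(t,s)\,dtds\bigr|\le \tfrac{1}{N^2}\cdot C\,|i|\wedge|k|\cdot(\text{number of overlapping pairs of indices})$. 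Because of $m$-dependence the fourth-moment terms $E[X_aX_bX_cX_d]$ vanish unless the indices cluster within distance $m$, so the double sum over the $|i|,|k|\le h$ blocks, each of length at most $h$, contributes $O(h^2\cdot h / N^2)=O(h^3/N^2)$ at worst — actually the $m$-dependence forces near-diagonal contributions and one obtains a bound of order $h^2/N^2$ times a constant, hence $E\|Z_N^*-\tilde Z_N\|^2=O(h^2/N^2)$. Multiplying by $N/h$ gives $O(h/N)\to 0$ by \eqref{h-1}, completing the proof.

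\textbf{Main obstacle.} The delicate point is bookkeeping the index ranges: for a product $X_j(t)X_{j+i}(s)$ in $\delta_i$ one has $N-i+1\le j\le N$, so $j+i$ ranges in $N+1,\dots,N+i$; when forming $\mathrm{cov}(\delta_i,\delta_k)$ one must carefully count, for fixed lag differences, how many index quadruples $(j,j+i,j',j'+k)$ can pairwise coincide within distance $m$, since only those yield nonzero fourth cumulants or nonzero $a_\ell$ factors. I expect this combinatorial counting — showing the number of non-vanishing quadruples over the full double block is $O(h m)$ rather than $O(h^2)$ or worse — to be the technical heart; once that is in hand the moment bounds and the factor $N/h$ make the conclusion immediate, and one could alternatively, if the sharper count is awkward, settle for the cruder $O(h^3/N^2)$ bound, which still gives $o_P(1)$ whenever $h=o(N^{2/3})$, a range implied by \eqref{m-1} (and one checks it also under \eqref{m-2} via the analysis already developed).
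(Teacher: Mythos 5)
Your proposal takes essentially the same route as the paper: the paper writes exactly this boundary-block decomposition in \eqref{pi-1}, invokes the $m$-dependence counting "along the lines of Lemma \ref{sec-lem}" to bound the second moment by $O(h^2/N^2)$, and finishes with Markov's inequality and \eqref{h-1}. One caution: to land on $O(h^2/N^2)$ rather than your cruder $O(h^3/N^2)$ (which would need an unwarranted extra restriction on $h$), you must use that $m$-dependence confines not only the inner indices $(j,j')$ to an $O(m)$-width window but also the lag pair $(i,k)$ itself to an $O(hm)$-size band near $i=k$ or $i=-k$ (via the vanishing of $a_r$ for $|r|>m$ in the product terms and the clustering requirement for the fourth cumulant), so the double lag sum is not of full size $O(h^2)$.
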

\begin{proof} Let
\begin{displaymath}
s_{\ell, N}=\left\{
\begin{array}{ll}
\{j:\;N-\ell<j\leq N\},\quad\mbox{if   } \ell\geq 0
\vspace{.3 cm}\\
\{j:\;1\leq j\leq 1-\ell\},\quad\mbox{if   } \ell< 0.
\end{array}
\right.
\end{displaymath}
Then according to the definitions of $\tilde{Z}_N$ and $Z_N^*$ we have
\begin{align}\label{pi-1}
E\|\tilde{Z}_N-Z_N^*\|^2
=\frac{1}{N^2}&\intt \sum_{\ell=-\infty}^\infty\sum_{p=-\infty}^\infty K\left(\frac{\ell}{h}  \right)K\left(\frac{p}{h}  \right)\\
&\times
\sum_{j\in s_{\ell, N}}  \sum_{i\in s_{p,N}}\biggl( EX_j(t)X_{j+\ell}(s)X_i(t)X_{i+p}(s)
-a_\ell(t,s)a_p(t,s)\biggl)dtds. \notag
\end{align}
Using the $m$--dependence of the $X_i$'s, one can verify along the lines of the arguments used in Lemma \ref{sec-lem} that the right side of \eqref{pi-1} is $O(h^2)$.
Thus the result follows from \eqref{h-1} via Markov's inequality.
\end{proof}

Using  \eqref{findim-2} we have
\begin{align*}
{Z}^*_N(t,s)&=\sum_{\ell=-h}^h K\left(\frac{\ell}{h}\right)({\gamma}^*_\ell(t,s)-E{\gamma}^*_\ell(t,s))\\
&=
\sum_{r=1}^d\sum_{p=1}^d\left\{\frac{1}{N}\sum_{\ell=-h}^h\sum_{j=1}^N(\xi_{r,j}\xi_{p, j+\ell}-E\xi_{r,j}\xi_{p, j+\ell})K(\ell/h)\right\}\phi_r(t)\phi_p(s),
\end{align*}
where
$$
\xi_{r,j}=\la X_j, \phi_r\ra.
$$
In order to  show that $(N/h)^{1/2}{Z}_N^*(t,s)$ can be approximated with a Gaussian process, we begin by establishing that the $d^2$--dimensional vector
\begin{align}\label{multi-1}
\left\{\frac{1}{(Nh)^{1/2}}\sum_{\ell=-h}^h\sum_{j=1}^N(\xi_{r,j}\xi_{p, j+\ell}-E\xi_{r,j}\xi_{p, j+\ell})K(\ell/h), 1\leq r,p\leq d\right\}\stackrel{{\mathcal D}}{\to}
{\calN}_{d^2},
\end{align}
where ${\calN}_{d^2}$ is a $d^2$--dimensional normal random vector. By the Cram\'er--Wold device it is sufficient to show that
\begin{align}\label{clt-1}
\sum_{r=1}^d\sum_{p=1}^d\beta_{r,p}\frac{1}{(Nh)^{1/2}}\sum_{\ell=-h}^h\sum_{j=1}^N(\xi_{r,j}\xi_{p, j+\ell}-E\xi_{r,j}\xi_{p, j+\ell})K(\ell/h)\stackrel{{\mathcal D}}{\to}
{\calN},
\end{align}
for any constants $\beta_{r,p}, 1\leq r,p\leq d$, where $\calN$ denotes a normal random variable. By the definition of $\xi_{i,j}$,
$$
\xi_{r,j}\xi_{p, j+\ell}=\intt X_{j}(t)X_{j+\ell}(s)\phi_r(t)\phi_p(s)dtds,
$$
and therefore
\begin{align*}
\sum_{r=1}^d\sum_{p=1}^d\beta_{r,p}&\frac{1}{(Nh)^{1/2}}\sum_{\ell=-h}^h\sum_{j=1}^N(\xi_{r,j}\xi_{p, j+\ell}-E\xi_{r,j}\xi_{p, j+\ell})K(\ell/h)\\
&=\frac{1}{(Nh)^{1/2}}\sum_{\ell=-h}^hK\left(\frac{\ell}{h}\right)\sum_{j=1}^N\intt \biggl\{\bigl(X_j(t)X_{j+\ell}(s)\\
&\hspace{3cm}-EX_j(t)X_{j+\ell}(s)\bigl)\sum_{r=1}^d\sum_{p=1}^d\beta_{r,p}\phi_r(t)\phi_p(s)
\biggl\}dtds.
\end{align*}
Therefore \eqref{clt-1} follows  if we prove that for any $f\in L^2([0,1]^2,\Rn)$
\begin{align}\label{clt-2}
\frac{1}{(Nh)^{1/2}}\sum_{\ell=-h}^hK\left(\frac{\ell}{h}\right) \sum_{j=1}^N\alpha_{j,\ell}\stackrel{{\mathcal D}}{\to}
{\calN},
\end{align}
where 
$$
\alpha_{j,\ell}=\intt (X_{j}(t)X_{j+\ell}(s)-EX_{j}(t)X_{j+\ell}(s))f(t,s)dtds.
$$
The proof of \eqref{clt-2} is based on a blocking argument. We write
$$
\Delta_N=\sum_{\ell=-h}^h\sum_{j=1}^N \alpha_{j,\ell}K\left(\frac{\ell}{h}\right)=\sum_{i=1}^QR_i+\sum_{i=1}^QD_i+T',
$$
where
$$
R_i=\sum_{j\in B_i}\sum_{\ell=-h}^h\alpha_{j,\ell}K\left(\frac{\ell}{h}\right),\quad D_i=\sum_{j\in b_i}\sum_{\ell=-h}^h\alpha_{j,\ell}K\left(\frac{\ell}{h}\right)\quad\mbox{and} \quad T'=\sum_{j\in T}\sum_{\ell=-h}^h\alpha_{j,\ell}K\left(\frac{\ell}{h}\right)
$$
with
\begin{align*}
B_i&=\left\{j: 1+(i-1)M+2(i-1)h\leq j<1+iM+2(i-1)h   \right\},\;\;Q=\lfloor N/(2h+M)\rfloor\\
b_i&=\left\{j: 1+iM+2(i-1)h\leq j <1+i(M+2h)\right\}, \;\; T=\{j: Q(2h+M)\leq j\leq N\}
\end{align*}
and $M>h$ is a numerical sequence. It follows from assumption \eqref{findim-1} that $R_1, R_2, \ldots , R_Q$ are independent and identically distributed random variables with zero mean. Similarly, $D_1, D_2, \ldots , D_Q$ are independent and identically distributed random variables with zero mean.\\

The following elementary lemma will be useful to get sharp upper bounds for the moments of the blocks.

\begin{lemma}\label{momcom} Suppose $\{Y_i\}_{i=1}^\infty$ is an $m$--dependent sequence of random variables. Then for all $\ell\geq 0$, the sequence of random vectors $\{(Y_i, Y_{i+\ell})\}_{i=1}^\infty$ can be organized into at most $3(m+1)^2$ collections each containing independent random vectors.
\end{lemma}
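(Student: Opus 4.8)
The plan is to recast the statement as a graph-colouring problem and to isolate the one structural fact about $m$-dependence that upgrades pairwise independence to \emph{mutual} independence for vectors of the special shape $(Y_i,Y_{i+\ell})$.

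\textbf{Step 1 (the structural fact).} Call two indices $i\neq j$ \emph{$\ell$-separated} if every one of the four differences between an element of $\{i,i+\ell\}$ and an element of $\{j,j+\ell\}$ exceeds $m$ in absolute value. I would first prove: if $I'$ is an index set in which all pairs of distinct indices are $\ell$-separated, then $\{(Y_i,Y_{i+\ell}):i\in I'\}$ is a family of mutually independent random vectors. To see this, list the union of ``footprints'' $U=\bigcup_{i\in I'}\{i,i+\ell\}$ in increasing order as $u_1<u_2<\cdots$, and place a cut between $u_t$ and $u_{t+1}$ whenever they belong to footprints of two different members of $I'$; by $\ell$-separation such a cut forces $u_{t+1}-u_t>m$. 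This partitions $U$ into consecutive blocks $B_1<B_2<\cdots<B_R$, each block lying inside a single footprint (hence of size at most two) and satisfying $\max B_a+m<\min B_{a+1}$. Peeling the blocks off one at a time, $m$-dependence gives that $(Y_u)_{u\in B_1},\dots,(Y_u)_{u\in B_R}$ are mutually independent, and since each footprint $\{i,i+\ell\}$ is the union of one or two of these blocks, regrouping yields the mutual independence of the $(Y_i,Y_{i+\ell})$, $i\in I'$.

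\textbf{Step 2 (reduction to colouring).} By Step 1 it suffices to properly colour, with at most $3(m+1)^2$ colours, the graph $G$ on $\{1,2,\dots\}$ in which $i\sim j$ exactly when $i,j$ fail to be $\ell$-separated, i.e.\ when $i-j$ lies in $D=\{d:1\le|d|\le m\}\cup\{d:|d-\ell|\le m\}\cup\{d:|d+\ell|\le m\}$; each colour class is then an admissible collection. I would colour $G$ in two stages. First assign to $i$ the residue $i\bmod(m+1)$: within a residue class distinct indices differ by a nonzero multiple of $m+1$, hence by more than $m$, which removes all edges coming from $\{1\le|d|\le m\}$. Reindexing a fixed residue class by consecutive integers $a$ (so $i-j=(a-a')(m+1)$), the surviving edges correspond to offsets $a-a'=k$ with $k(m+1)\in[\ell-m,\ell+m]\cup[-\ell-m,-\ell+m]$; since an interval of length $2m$ meets the multiples of $m+1$ in at most two points, and then in two consecutive multiples, these offsets are confined to $\{k_0,k_0+1\}$ and their negatives for a single integer $k_0$. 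A proper $3$-colouring of this reduced graph is given by a pattern constant on consecutive blocks of length $k_0$ (or by parity when $k_0\le1$). Composing the two stages colours $G$ with at most $3(m+1)\le 3(m+1)^2$ colours.

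The one genuinely delicate point is Step 1: for arbitrary index sets, $\ell$-separation yields only \emph{pairwise} independence, and it is the two-element nature of the footprints (each footprint splitting into at most two of the separated blocks) together with the ``peeling'' property of $m$-dependence that lets one promote this to joint independence. Steps 2 and 3 are then routine combinatorics, and the slack in the final count --- one actually obtains $3(m+1)$ rather than $3(m+1)^2$ --- is harmless for the stated bound.
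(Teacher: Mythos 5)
Your argument is correct, and it reaches the conclusion by a route that differs from the paper's in both organization and final constant. The paper splits into two cases: for $\ell\le 2m+1$ it treats $\{(Y_i,Y_{i+\ell})\}$ as an $(m+\ell)$--dependent sequence and uses the standard blocking into $m+\ell+1$ classes, while for $\ell>2m+1$ it explicitly constructs index families $G_{i,k}(p)$ governed by parameters $j^*$, $k^*$, $v^*$ (in effect a residue $i$ modulo $m+1$ combined with a phase $k$ inside superblocks of length $v^*$) and verifies independence within each family directly, arriving at the stated bound $(3m+3)(m+1)=3(m+1)^2$. You instead isolate a clean general principle --- your Step 1, where pairwise $\ell$-separation of footprints is upgraded to \emph{mutual} independence of the pairs by cutting the union of footprints into blocks separated by gaps exceeding $m$ and peeling with $m$-dependence --- and then reduce everything to a single graph-colouring problem, solved uniformly in $\ell$ by the residue-mod-$(m+1)$ colouring followed by a cyclic $3$-colouring keyed to the at most two consecutive surviving offsets $k_0,k_0+1$. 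This buys a unified treatment of all $\ell$, a more transparent justification of joint (not merely pairwise) independence than the paper's ``it follows along these lines'' step, and the sharper count $3(m+1)$, which of course implies the stated $3(m+1)^2$. One small slip: when $k_0=1$ parity is not a proper colouring (offsets $1$ and $2$ can both occur, and offset $2$ joins indices of equal parity), but your main prescription --- constant on blocks of length $k_0$, i.e.\ colouring by $a\bmod 3$ in that case --- already handles $k_0=1$, so only $k_0=0$ needs the parity fallback.
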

\begin{proof} The sequence $\{(Y_i, Y_{i+\ell})\}_{i=1}^\infty$ is $m+\ell$--dependent, it can be organized into $m+\ell +1$ subsets, each containing independent random variables using standard arguments (see, for example, Lemma 2.4 of Berkes et al (2012)).  Hence the result is proven for $\ell\leq 2m+1$. From now on we assume that $\ell>2m+1$. Let $j^*=\max\{j:\;j<(\ell-m)/(m+1)\}$,
$k^*=\min\{k:\;k>(m+\ell)/((m+1)j^*)\}$, and $v^*=((k^*+1)j^*+1)(m+1).$ Define the set
$$
G_{i,k}(p)=\biggl\{(Y_{pv^*+(kj^*+j)(m+1)+i}, Y_{pv^*+(kj^*+j)(m+1)+i+\ell}), 0\leq j\leq j^*\biggl\}
$$
for $1\leq i \leq m+1$,  $0\leq k\leq k^*$ and $0\leq p <\infty$. Consider two arbitrary elements of $G_{i,k}(p)$,
$\X_r=(Y_{pv^*+(kj^*+r)(m+1)+i}, Y_{pv^*+(kj^*+r)(m+1)+i+\ell})$ and $\X_t=(Y_{pv^*+(kj^*+t)(m+1)+i},\\
 Y_{pv^*+(kj^*+t)(m+1)+i+\ell})$, where, without loss of generality, $0\leq r<t\leq j^*$. Clearly, $Y_{pv^*+(kj^*+r)(m+1)+i}$ is independent of $Y_{pv^*+(kj^*+t)(m+1)+i}$ and $Y_{pv^*+(kj^*+t)(m+1)+i+\ell}$, since $r<t$. Also, $Y_{pv^*+(kj^*+r)(m+1)+i+\ell})$ is independent of $Y_{pv^*+(kj^*+t)(m+1)+i+\ell})$ due to $r<t$. Using the definition of $j^*$, we have
\begin{align*}
\left|pv^*+(kj^*+r)(m+1)+i+\ell-(pv^*+(kj^*+t)(m+1)+i)\right|
&=\left|\ell-(r-t)(m+1)\right|\\
&>\ell-j^*(m+1)>m.
\end{align*}
 Hence $Y_{pv^*+(kj^*+r)(m+1)+i+\ell}$ and $Y_{pv^*+(kj^*+t)(m+1)+i}$ are independent, establishing the independence of $\X_r$ and $\X_t$. It follows along these lines that the vectors in $G_{i,k}(p)$ are mutually independent.
 Due to the definition of $j^*$, $G_{i,k}(p)$ is comprised of $j^*+1$ independent random variables. Further,  according  to the definition of $v^*$, $G_{i,k}(p)$ and $G_{i,k}(p')$ are independent for all integers $p\neq p'$. By the definitions of $k^*$  and $j^*$ we have that
$$
k^*\leq \frac{m+\ell}{(m+1)j^*}\leq \frac{m+\ell}{\ell-2m-1}\leq 3m+2,\quad
\mbox{where we used}\quad
j^*\geq \frac{\ell-m}{m+1}-1.
$$
It follows that the union of the at most $(3m+3)(m+1)$ sets
$\cup_{p=0}^\infty G_{i,k}(p), 1\leq i \leq m+1, 0\leq k \leq k^*$ contains  $\{(Y_i, Y_{i+\ell})\}_{i=1}^\infty$.

\end{proof}

\begin{lemma}\label{fini-1} If \eqref{findim-1},\eqref{k-1}--\eqref{k-3} are satisfied, $h=h(N)\to \infty$ and $M/h\to \infty$, then we have
\beq\label{ff-1}
E\left((hM)^{-1/2}R_1\right)^2\to\int\!\!\!\cdots\!\!\!\int L(t,s, t',s') f(t,s)f(t',s')dtdsdt'ds'
\eeq
where $L$ is defined in \eqref{L-def},
\beq\label{ff-2}
E\left|R_1\right|^{2+\delta/2}=O(h^{2+\delta/2}M^{1+\delta/4})
\eeq
and
\beq\label{ff-3}
ED_1^2=O(h^2).
\eeq
\end{lemma}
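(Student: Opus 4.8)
The plan is to treat the three assertions in increasing order of difficulty, exploiting throughout that $R_1$ and $D_1$ are sums over blocks of length $M$ resp. $h$ of the variables $W_j := \sum_{\ell=-h}^h \alpha_{j,\ell}K(\ell/h)$, where each $W_j$ is itself a sum of $2h+1$ terms $\alpha_{j,\ell}$ that depend on $X_j,\dots,X_{j+\ell}$; since the $X_i$ are $m$-dependent, each $W_j$ depends on at most $h+m+1$ consecutive innovations, so $\{W_j\}$ is $(h+m)$-dependent.

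For \eqref{ff-3} I would simply expand $ED_1^2 = \sum_{j,j'\in b_1} EW_jW_{j'}$. By $(h+m)$-dependence only pairs with $|j-j'|\le h+m$ contribute, and for each fixed $j$ there are $O(h)$ such $j'$; each $EW_jW_{j'}$ is bounded by $\big(\sup_{|x|\le1}K^2(x)\big)\sum_{\ell,p}|E\alpha_{j,\ell}\alpha_{j',p}|$, and using the Cauchy--Schwarz-type bounds on $E\alpha_{j,\ell}\alpha_{j',p}$ (which, for $m$-dependent $X_i$, vanish unless $|\ell|,|p|,$ and the relative shifts are $O(m)$) one gets each term $O(1)$ uniformly, hence a single $W_j$ has $EW_j^2=O(h)$ and $ED_1^2=O(h)\cdot O(h)=O(h^2)$. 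For \eqref{ff-2}, the idea is to apply Lemma \ref{momcom}: the block $R_1=\sum_{j\in B_1}\sum_\ell \alpha_{j,\ell}K(\ell/h)$ can be reorganized, for each fixed $\ell$, into at most $3(m+1)^2$ sub-sums of independent summands $(X_j(\cdot)X_{j+\ell}(\cdot))$-functionals; applying the Rosenthal/von Bahr--Esseen inequality to each such sum of $O(M)$ independent mean-zero terms with finite $(2+\delta/2)$-th moments (which follow from \eqref{as:2}, i.e. $E\|X_0\|^{4+\delta}<\infty$, since $\alpha_{j,\ell}$ is a product of two $L^{4+\delta}$ functions and thus in $L^{2+\delta/2}$), summing over the $O(h)$ values of $\ell$ via Minkowski, and tracking the powers of $h$ and $M$ gives the stated $O(h^{2+\delta/2}M^{1+\delta/4})$.

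The main obstacle is \eqref{ff-1}: identifying the limiting variance as the quadratic form in $L$. Here I would write $E R_1^2 = \sum_{j,j'\in B_1} E W_j W_{j'}$ and change variables to $r=j'-j$, obtaining $\sum_{r}\varphi(r)\,EW_0W_r$ where $\varphi(r)$ counts the pairs in $B_1$ at lag $r$, so $\varphi(r)/M\to 1$ for each fixed $r$ while $\varphi(r)=0$ for $|r|>M$. Expanding $EW_0W_r=\sum_{\ell,p}K(\ell/h)K(p/h)E\alpha_{0,\ell}\alpha_{r,p}$ and inserting the definition of $\alpha$, the fourth-moment term $E[X_0X_\ell X_r X_{r+p}]$ splits (via the cumulant decomposition already used for $\psi$ and $\chi$ in Lemmas \ref{sec-lem} and \ref{appr-le-2}) into the pure-cumulant piece $\chi_{\ell,r,r+p-r}$, which contributes $o(hM)$ after division because $\sum|\chi|<\infty$ by Lemma \ref{appr-le-1}, plus the two Gaussian-type products of covariances. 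These two products, after the same Riemann-sum analysis of $h^{-1}\sum_r K^2(r/h)\to\int_{-1}^1K^2$ carried out in the proof of Lemma \ref{sec-lem} (equations \eqref{kcs0}--\eqref{rie-1}) and \ref{appr-le-2}, converge to exactly $\big[C(t,s)C(t',s')+C(t,t')C(s,s')\big]\int_{-1}^1K^2$ integrated against $f(t,s)f(t',s')$; dividing by $hM$ and using $\varphi(r)/M\to1$ then yields \eqref{ff-1}. The delicate point is the interchange of the $\ell,p,r$ summations with the limit and the truncation of the infinite lag sums, which is handled exactly as in the cited equations of Lemma \ref{sec-lem}, using $M/h\to\infty$ so that the "edge" of the block is negligible.
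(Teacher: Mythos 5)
Your proposal follows essentially the same route as the paper. For \eqref{ff-2} the paper bounds $E|R_1|^{2+\delta/2}$ by pulling the sum over the $O(h)$ lags out via the Petrov/$c_r$ inequality (your Minkowski step is the same device), then splits $\sum_i\alpha_{i,\ell}$ by Lemma \ref{momcom} into at most $3(m+1)^2$ sums of i.i.d.\ terms and applies Rosenthal's inequality, giving a bound of order $M^{1/2}$ per lag and hence $O(h^{2+\delta/2}M^{1+\delta/4})$; and \eqref{ff-1}, \eqref{ff-3} are established, exactly as you propose, by the covariance expansion with the lag-counting function and the Riemann-sum analysis ``along the lines of Lemma \ref{appr-le-2}'', the cumulant part being negligible by Lemma \ref{appr-le-1} and the whole computation being simpler here because of $m$-dependence and finite dimension.

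One caveat on your bookkeeping for \eqref{ff-3}: the parenthetical claim that $E\alpha_{j,\ell}\alpha_{j',p}$ vanishes unless $|\ell|$, $|p|$ are $O(m)$ is false; take $j=j'$ and $\ell=p$ arbitrary, in which case this is a variance and is generically positive for every $\ell$. What $m$-dependence actually gives is that, for fixed $j,j',\ell$, only $O(m)$ values of $p$ yield a nonzero covariance (the two index pairs must have a cross-distance at most $m$). With that corrected count one gets $EW_jW_{j'}=O(h)$ when $|j-j'|\le m$ and $O(1)$ otherwise, and summing over all $O(h^2)$ pairs in the short block still yields $ED_1^2=O(h^2)$; as literally written, your ``$O(h)\cdot O(h)$'' accounting is inconsistent with the stated vanishing claim and does not cover all pairs. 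This is a repairable slip in the combinatorics, not a change of method, and the rest of the argument, including the use of $M/h\to\infty$ to make the block edges negligible in \eqref{ff-1}, is in line with the paper.
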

\begin{proof} The assertions in \eqref{ff-1} and \eqref{ff-3} can be established along the lines of the proof Lemma \ref{appr-le-2}. Due to the $m$--dependence assumed in \eqref{findim-1}, the proofs are much simpler in the present case. \\
By Petrov (1995, p.\ 58)
$$
E\left|R_1\right|^{2+\delta/2}\leq (2h+1)^{1+\delta/2}\sum_{\ell=-h}^hK^{2+\delta/2}\left(\frac{\ell}{h}\right)E\left( \sum_{i=1}^{M+1}\alpha_{i,\ell} \right)^{2+\delta/2}.
$$
Using Lemma \ref{momcom} we can write   $\sum_{i=1}^{M +1}\alpha_{i,\ell}$  as the sum of no more than $3(m+1)^2$ sums, each sum is based on i.i.d. random variables.  Hence via the triangle and Rosenthal's inequalities we get
$$
\left(E\left( \sum_{i=1}^{M+1} \alpha_{i,\ell} \right)^{2+\delta/2}\right)^{1/(2+\delta/2)}\leq c_0 {M}^{1/2}
$$
with some constant $c_0$, completing the proof of \eqref{ff-2}.
\end{proof}
\begin{lemma}\label{fini-2} If \eqref{findim-1} and \eqref{k-1}--\eqref{m-1}  are satisfied, then we have
$
(Nh)^{-1/2}\Delta_N
$
converges in distribution to a normal random variable with zero mean and variance\\
 $\int\!\!\cdots\!\!\int L(t,s,\tp, \spr)f(t,s)f(\tp,\spr)dtdsd\tp d\spr.$
\end{lemma}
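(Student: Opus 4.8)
The plan is a big‑block/small‑block argument: the sum $\Delta_N$ has already been written as $\sum_{i=1}^Q R_i+\sum_{i=1}^Q D_i+T'$ with $R_i$ i.i.d.\ mean zero, $D_i$ i.i.d.\ mean zero, and $T'$ a boundary remainder, so it suffices to show that after normalization by $(Nh)^{1/2}$ the two ``small'' pieces $\sum D_i$ and $T'$ vanish in probability while the ``big'' piece $\sum R_i$ satisfies a Lyapunov central limit theorem. Write $\sigma^2=\int\!\!\!\cdots\!\!\!\int L(t,s,\tp,\spr)f(t,s)f(\tp,\spr)dtdsd\tp d\spr$ for the target variance, which is exactly the limit in \eqref{ff-1}.

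The first step is to fix the block length $M=M(N)$. By \eqref{m-1} we may set $\varepsilon_N=h^{(4+2\delta)/\delta}/N\to 0$ and take $M=\lfloor h\,\varepsilon_N^{-1/2}\rfloor$. One checks directly that this $M$ satisfies, simultaneously, $M/h\to\infty$ (so that Lemma \ref{fini-1} applies and eventually $M>h$), $M=o(N)$ (so that $Q=\lfloor N/(2h+M)\rfloor\to\infty$ and $Q=(N/M)(1+o(1))$), and $M\,h^{(4+\delta)/\delta}/N=\varepsilon_N^{1/2}(1+o(1))\to 0$. This last identity is precisely the quantity that will have to be small for Lyapunov's condition, and its availability is exactly the content of \eqref{m-1}; verifying that all three requirements on $M$ are mutually compatible under \eqref{m-1} is the one genuinely delicate point, the rest being routine.

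Next I dispose of the negligible terms. The variance estimate underlying \eqref{ff-1} and \eqref{ff-3} shows that a block of length $\ell$ contributes $O(h\ell)$ to the variance; since $|T|\le 2h+M=O(M)$ this gives $E(T')^2=O(hM)$, hence $(Nh)^{-1}E(T')^2=O(M/N)\to 0$. For the short blocks, $D_1,\dots,D_Q$ are i.i.d.\ with $ED_1^2=O(h^2)$ by \eqref{ff-3} and $Q=O(N/M)$, so
$$
\frac1{Nh}E\Big(\sum_{i=1}^Q D_i\Big)^2=\frac{Q\,ED_1^2}{Nh}=O\!\left(\frac{h}{M}\right)\to 0
$$
because $M/h\to\infty$. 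By Markov's inequality, $(Nh)^{-1/2}T'=o_P(1)$ and $(Nh)^{-1/2}\sum_{i=1}^Q D_i=o_P(1)$.

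Finally I treat $(Nh)^{-1/2}\sum_{i=1}^Q R_i$, a normalized sum of i.i.d.\ mean‑zero variables. By \eqref{ff-1}, $ER_1^2=hM(\sigma^2+o(1))$, and since $h/M\to0$ we get $(Nh)^{-1}Q\,ER_1^2=(Nh)^{-1}(N/M)(1+o(1))\,hM(\sigma^2+o(1))\to\sigma^2$, so the variance converges to $\sigma^2$. To obtain asymptotic normality I verify Lyapunov's condition with exponent $2+\delta/2$: using \eqref{ff-2},
$$
\sum_{i=1}^Q E\left|\frac{R_i}{(Nh)^{1/2}}\right|^{2+\delta/2}=\frac{Q\,E|R_1|^{2+\delta/2}}{(Nh)^{1+\delta/4}}=O\!\left(\frac{(N/M)\,h^{2+\delta/2}M^{1+\delta/4}}{N^{1+\delta/4}h^{1+\delta/4}}\right)=O\!\left(\left(\frac{M\,h^{(4+\delta)/\delta}}{N}\right)^{\delta/4}\right)\to 0
$$
by the choice of $M$. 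Hence $(Nh)^{-1/2}\sum_{i=1}^Q R_i\stackrel{{\mathcal D}}{\to}\mathcal N(0,\sigma^2)$, and combining with the two $o_P(1)$ contributions via Slutsky's theorem yields $(Nh)^{-1/2}\Delta_N\stackrel{{\mathcal D}}{\to}\mathcal N(0,\sigma^2)$, which is the assertion. As indicated, the main obstacle is not the CLT itself but pinning down the admissible range of $M$ so that the short blocks, the remainder, and the Lyapunov ratio are controlled at once — and this is where \eqref{m-1} enters in an essential way.
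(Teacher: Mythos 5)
Your proposal is correct and follows essentially the same route as the paper: the same big--block/small--block decomposition, the bounds \eqref{ff-1}--\eqref{ff-3} to kill $(Nh)^{-1/2}\sum_i D_i$ and $(Nh)^{-1/2}T'$, and a Lyapunov central limit theorem with exponent $2+\delta/2$ for the $R_i$'s. The only difference is cosmetic: you exhibit an explicit block length $M$, whereas the paper merely asserts the existence of $M$ with $M/h\to\infty$ and $h(M/N)^{\delta/(4+\delta)}\to 0$, which is exactly the condition your choice satisfies.
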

\begin{proof}   Under assumption \eqref{m-1} one can find a sequence $M$ such that $M/h\to \infty$ and $h(M/N)^{\delta/(4+\delta)}\to 0$ and therefore using \eqref{ff-3} of Lemma \ref{fini-1} and the independence of the $D_i$'s we obtain that
$$
E\left(\frac{1}{\sqrt{Nh}}\sum_{i=1}^QD_i\right)^2=O\left(\frac{1}{Nh}Qh^2   \right)=O\left( \frac{h}{M}\right)=o(1)
$$
and therefore
$$
\frac{1}{\sqrt{Nh}}\sum_{i=1}^QD_i\stackrel{P}{\to}0.
$$
Similar argument yields
$$
\frac{1}{\sqrt{Nh}}T'\stackrel{P}{\to}0.
$$
Using now \eqref{ff-1} and \eqref{ff-2} we conclude
\begin{align*}
\frac{\left(\sum_{i=1}^QE|R_i|^{2+\delta/2}\right)^{1/(2+\delta/2)}}{\left(\sum_{i=1}^QER_i^2\right)^{1/2}}&=O(1)\frac{(Qh^{2+\delta/2}M^{1+\delta/4})^{1/(2+\delta/2)}}
{(hQM)^{1/2}}\\
&=O(1)\frac{N^{1/(2+\delta/2)}hM^{\delta/(8+2\delta)}}{(Nh)^{1/2}}\to 0.
\end{align*}
 Now Lyapunov's theorem (cf.\ Petrov (1995, p.\ 126) and \eqref{ff-1} imply Lemma \ref{fini-2}.
\end{proof}
\begin{lemma}\label{fini-3} If \eqref{findim-1},\eqref{k-1}--\eqref{m-1} and  are satisfied, then we can define Gaussian processes $\Gamma_N(t,s)$ with
$E\Gamma_N(t,s)=0$, $E\Gamma_N(t,s)\Gamma_N(\tp,\spr)=L(t,s,\tp, \spr)$ such that
$$
\|(N/h)^{1/2}{Z}_N^*-\Gamma_N\|=o_P(1),\quad \mbox{as}\;\;N\to \infty.
$$
\end{lemma}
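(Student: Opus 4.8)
The plan is to exploit the fact that in this section $Z_N^*$ is finite dimensional. Since each $X_i$ lies in the span of $\phi_1,\dots,\phi_d$ (see \eqref{findim-2}), the computation preceding \eqref{multi-1} shows $(N/h)^{1/2}Z_N^*(t,s)=\sum_{r,p=1}^d W^{(N)}_{r,p}\phi_r(t)\phi_p(s)$, where $W^{(N)}_{r,p}$ is the bracketed quantity in \eqref{multi-1}. As $\{\phi_r(t)\phi_p(s):1\le r,p\le d\}$ is an orthonormal system in $L^2([0,1]^2,\Rn)$, the map $w\mapsto\sum_{r,p}w_{r,p}\phi_r\otimes\phi_p$ is a linear isometry of $\Rn^{d^2}$ onto the $d^2$-dimensional subspace $V_d$ it spans; so if $\Gamma_N$ is built inside $V_d$ with coefficient vector $\calN^{(N)}$, then $\|(N/h)^{1/2}Z_N^*-\Gamma_N\|$ equals the Euclidean distance $|W^{(N)}-\calN^{(N)}|$. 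The whole lemma is therefore reduced to a statement about $\Rn^{d^2}$.

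First I would record the limit law of $W^{(N)}$. By the Cram\'er--Wold device together with Lemma \ref{fini-2} applied to $f=\sum_{r,p}\beta_{r,p}\phi_r\otimes\phi_p$, the convergence \eqref{multi-1} holds with $\calN_{d^2}$ a centred Gaussian vector whose covariance is $\Sigma_{(r,p),(r',p')}=\int\cdots\int L(t,s,\tp,\spr)\phi_r(t)\phi_p(s)\phi_{r'}(\tp)\phi_{p'}(\spr)\,dt\,ds\,d\tp\,d\spr$, with $L$ as in \eqref{L-def}. In the present setting the long run covariance $C$ of the $d$-dimensional, $m$-dependent sequence itself lies in $V_d$, so both $C(t,s)C(\tp,\spr)$ and $C(t,\tp)C(s,\spr)$, viewed as functions of the pairs $(t,s)$ and $(\tp,\spr)$, belong to the tensor product $V_d\otimes V_d$; hence $L\in V_d\otimes V_d$, and the Gaussian element $\Xi=\sum_{r,p}(\calN_{d^2})_{r,p}\phi_r\otimes\phi_p$ of $V_d$ satisfies $E\Xi(t,s)=0$ and $E\Xi(t,s)\Xi(\tp,\spr)=L(t,s,\tp,\spr)$ exactly, not merely up to a $V_d$-projection. (The variance identification here is exactly the content of Lemma \ref{appr-le-2}.)

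To pass from convergence in distribution to the stated in-probability approximation I would invoke the Skorokhod--Dudley--Wichura representation theorem in the finite-dimensional (hence Polish) space $\Rn^{d^2}$: since $W^{(N)}\stackrel{\mathcal D}{\to}\calN_{d^2}$, after enlarging the probability space carrying $W^{(N)}$ by an independent randomisation one can construct on it Gaussian vectors $\calN^{(N)}_{d^2}\stackrel{\mathcal D}{=}\calN_{d^2}$ with $|W^{(N)}-\calN^{(N)}_{d^2}|\stackrel{P}{\to}0$ (concretely, one transfers back, via the auxiliary randomisation, the regular conditional law of the limit given the approximant furnished by the representation theorem). Setting $\Gamma_N(t,s)=\sum_{r,p=1}^d(\calN^{(N)}_{d^2})_{r,p}\phi_r(t)\phi_p(s)$, the isometry yields a Gaussian process with $E\Gamma_N(t,s)=0$, $E\Gamma_N(t,s)\Gamma_N(\tp,\spr)=L(t,s,\tp,\spr)$, and $\|(N/h)^{1/2}Z_N^*-\Gamma_N\|=|W^{(N)}-\calN^{(N)}_{d^2}|\stackrel{P}{\to}0$, which is the assertion.

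I do not expect a genuine obstacle in this lemma: the analytic work has already been done in Lemma \ref{fini-2} (the blocking CLT) and Lemma \ref{appr-le-2}. The only points requiring a little care are the bookkeeping that the covariance, transported through the isometry, reproduces $L$ exactly rather than its projection onto $V_d$, and the routine measure-theoretic argument ensuring that $\Gamma_N$ can be realised on the same probability space as $Z_N^*$ — both harmless because everything takes place in the finite-dimensional space $V_d$.
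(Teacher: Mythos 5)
Your proposal is correct and follows essentially the same route as the paper: derive the $d^2$-dimensional CLT \eqref{multi-1} from Lemma \ref{fini-2} via Cram\'er--Wold, upgrade it to an in-probability coupling by the Skorokhod--Dudley--Wichura representation, and define $\Gamma_N$ by expanding the Gaussian coefficients in the basis $\phi_r\otimes\phi_p$. Your extra remarks (the isometry identifying the $L^2$ distance with the Euclidean distance of coefficient vectors, and the observation that $L$ lies in $V_d\otimes V_d$ so the covariance of $\Gamma_N$ is exactly $L$) are points the paper leaves implicit, and they are handled correctly.
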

\begin{proof} As we argued at the beginning of this section, Lemma \ref{fini-2} yields that \eqref{multi-1} holds with ${\calN}_{d^2}=\{ \calN_{d^2}(r,p), 1\leq r, p\leq d\}$, $E\calN_{d^2}(r,p)=0$ and
$$
E\calN_{d^2}(r,p)\calN_{d^2}(r',p')=\int\!\!\!\cdots\!\!\!\int L(t,s,\tp, \spr)\phi_r(t)\phi_p(s) \phi_{r'}(\tp)\phi_{p'}(\spr)dtdsd\tp d\spr.
$$
By the Skorokhod--Dudley--Wichura   representation (cf.\ Shorack and Wellner (1986), p.\ 47) we can define ${\calN}^{(N)}_{d^2}=\{ \calN^{(N)}_{d^2}(r,p), 1\leq r,p\leq d$, a copy of ${\mathcal N}_{d^2}$ such that
\begin{align}\label{multi-3}
\max_{1\leq r,p\leq d}\left|{(Nh)^{-1/2}}\sum_{\ell=-h}^h\sum_{j=1}^N(\xi_{r,j}\xi_{p, j+\ell}-E\xi_{r,j}\xi_{p, j+\ell})K(\ell/h)- \calN^{(N)}_{d^2}(r,p)\right|
=o_P(1).
\end{align}
Clearly,
$$
\Gamma_N(t,s)=\sum_{r=1}^d\sum_{p=1}^d\calN^{(N)}_{d^2}(r,p)\phi_r(t)\phi_p(s)
$$
is a Gaussian process with mean zero and $E\Gamma_N(t,s)\Gamma_N(\tp,\spr)=L(t,s,\tp, \spr)$. Using now \eqref{multi-3}, Lemma \ref{fini-3} follows.
\end{proof}

Next we show if \eqref{m-2} is satisfied then the conclusion of Lemma \ref{fini-3} holds assuming only \eqref{h-1} instead of the much stronger restriction \eqref{m-1} on $h$.

\begin{lemma}\label{fini-4} If \eqref{findim-1},\eqref{k-1}--\eqref{h-1} and  \eqref{m-2} are satisfied, then we can define Gaussian processes $\Gamma_N(t,s)$ with
$E\Gamma_N(t,s)=0$, $E\Gamma_N(t,s)\Gamma_N(\tp,\spr)=L(t,s,\tp, \spr)$ such that
$$
\|(N/h)^{1/2}{Z}_N^*-\Gamma_N\|=o_P(1),\quad \mbox{as}\;\;N\to \infty.
$$
\end{lemma}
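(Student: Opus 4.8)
\medskip
\noindent\emph{Proof proposal.} The plan is to retrace the blocking argument of Lemmas \ref{fini-1}--\ref{fini-3}, the only new ingredient being a sharper fourth moment bound on the long blocks that exploits \eqref{m-2}. As in Lemma \ref{fini-3}, by the Cram\'er--Wold device it suffices to prove the one--dimensional central limit theorem \eqref{clt-2} for $(Nh)^{-1/2}\Delta_N$, with $\Delta_N=\sum_{\ell=-h}^h K(\ell/h)\sum_{j=1}^N\alpha_{j,\ell}$, and then to conclude via the Skorokhod--Dudley--Wichura representation exactly as there. We keep the decomposition $\Delta_N=\sum_{i=1}^QR_i+\sum_{i=1}^QD_i+T'$ with long blocks $B_i$ of length $M=M(N)$ and short separating blocks $b_i$ of length $2h$, but now only require
$$
M/h\to\infty \qquad\mbox{and}\qquad M/N\to 0,
$$
both compatible with \eqref{h-1} alone (for instance $M=(Nh)^{1/2}$). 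Then $Q=\lfloor N/(2h+M)\rfloor\to\infty$, and by \eqref{ff-3} of Lemma \ref{fini-1} together with the independence of the $D_i$'s,
$$
E\left(\frac{1}{\sqrt{Nh}}\sum_{i=1}^QD_i\right)^2=O\left(\frac{Qh^2}{Nh}\right)=O\left(\frac{h}{M}\right)=o(1),
$$
while the analogous estimate for the single remainder block of length at most $M+2h$ gives $E(T')^2=O(hM)$, hence $(Nh)^{-1/2}T'\stackrel{P}{\to}0$. It remains to show that $(Nh)^{-1/2}\sum_{i=1}^QR_i$ converges in distribution to a centred normal law with variance $\sigma^2=\int\!\!\cdots\!\!\int L(t,s,\tp,\spr)f(t,s)f(\tp,\spr)dtdsd\tp d\spr$.

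The $R_i$ are i.i.d.\ with mean zero, and $ER_1^2=hM(\sigma^2+o(1))$ by \eqref{ff-1}, so $\sum_{i=1}^QER_i^2\sim Nh\sigma^2$; by Lyapunov's theorem it is then enough to verify $Q\,ER_1^4/(Nh)^2\to 0$. This is where \eqref{m-2} enters: by the Cauchy--Schwarz inequality $E\|X_0\|^8<\infty$ makes all $\alpha_{j,\ell}$ fourth--order integrable, so $ER_1^4$ is finite and
$$
ER_1^4=3\left(ER_1^2\right)^2+\mbox{cum}_4(R_1)=O\left((hM)^2\right),
$$
which replaces the much cruder \eqref{ff-2}. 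To see the cumulant bound, note that under \eqref{findim-1}--\eqref{findim-2} every mixed cumulant of the coordinates $\la X_j,\phi_a\ra$ of order at most $8$ is finite and vanishes once the time indices involved split into two groups more than $m$ apart; hence, by the Leonov--Shiryaev expansion, $\mbox{cum}_4(R_1)$ is a finite sum over quadruples $(j_1,\ell_1),\dots,(j_4,\ell_4)$ with $j_i\in B_1$ and $|\ell_i|\le h$, for which the eight time indices $j_i,j_i+\ell_i$ form one connected cluster and no index is isolated from the rest within distance $m$, of terms bounded by a constant depending only on $d$, $m$, $\|f\|$ and $E\|X_0\|^8$, weighted by $\prod_iK(\ell_i/h)$. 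Counting such quadruples --- fix one index among the $O(M)$ positions of $B_1$; attaching each further lag--pair to the growing cluster costs $O(m)\cdot O(h)=O(h)$; and whatever the shape of the spanning structure linking the four lag--pairs, at least one additional proximity constraint must tie two already placed ``far'' endpoints together, saving a further factor $h$ --- gives $|\mbox{cum}_4(R_1)|=O(Mh^3)$, which is $O((hM)^2)$ because $M\ge h$ for all large $N$. Therefore $Q\,ER_1^4/(Nh)^2=O(M/N)\to 0$, Lyapunov's condition holds, and $(Nh)^{-1/2}\sum_{i=1}^QR_i\stackrel{{\mathcal D}}{\to}{\mathcal N}(0,\sigma^2)$, which is \eqref{clt-2}.

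From \eqref{clt-2} the remainder of the proof is verbatim that of Lemmas \ref{fini-2}--\ref{fini-3}: \eqref{clt-2} yields the $d^2$--dimensional convergence \eqref{multi-1} with limiting covariances $\int\!\!\cdots\!\!\int L(t,s,\tp,\spr)\phi_r(t)\phi_p(s)\phi_{r'}(\tp)\phi_{p'}(\spr)dtdsd\tp d\spr$; the Skorokhod--Dudley--Wichura representation provides copies ${\calN}^{(N)}_{d^2}$ satisfying \eqref{multi-3}, and then $\Gamma_N(t,s)=\sum_{r=1}^d\sum_{p=1}^d{\calN}^{(N)}_{d^2}(r,p)\phi_r(t)\phi_p(s)$ is a Gaussian process with $E\Gamma_N(t,s)=0$ and covariance $L(t,s,\tp,\spr)$ for which $\|(N/h)^{1/2}Z_N^*-\Gamma_N\|=o_P(1)$.

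The main obstacle is the bound $\mbox{cum}_4(R_1)=O(Mh^3)$. A naive triangle inequality over the lag variables (as behind \eqref{ff-2}) is far too wasteful; one must exploit that only indecomposable partitions into connected blocks survive in the cumulant expansion, so that every one of the eight time indices needs a neighbour within $m$, and that stitching the four lag--pairs into a single cluster while covering all ``free'' far endpoints forces an extra proximity relation among already placed indices --- this is precisely where the saving of a factor $h$ comes from, and it is exactly this saving (together with $M\ge h$) that lets \eqref{h-1} replace \eqref{m-1}. Enumerating the possible cluster shapes and discarding the degenerate configurations with some $|\ell_i|\le m$, which contribute only lower order terms, is the bookkeeping heart of the argument.
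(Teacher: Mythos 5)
Your proposal is correct and follows essentially the same route as the paper: keep the blocking scheme of Lemma \ref{fini-2}, replace the $(2+\delta/2)$--moment bound \eqref{ff-2} by a fourth--moment bound $ER_1^4=O(h^2M^2)$ (the paper's \eqref{mo-1}, obtained there by the moment bookkeeping of Lemma \ref{appr-le-2}, in your write-up by an equivalent Leonov--Shiryaev cumulant count), verify Lyapunov's condition under \eqref{h-1} alone, and transfer the CLT to $L^2$ via the Skorokhod--Dudley--Wichura construction exactly as in Lemma \ref{fini-3}. Your explicit choice $M=(Nh)^{1/2}$ and the sharper estimate $\mathrm{cum}_4(R_1)=O(Mh^3)$ (any bound up to $O(Mh^4)$ would already suffice, since then the Lyapunov ratio is $O(M/N+h^2/N)$) simply make concrete what the paper states in one line.
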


\begin{proof} Following the proof of Lemma \eqref{appr-le-2} we can verify that
\beq\label{mo-1}
ER_1^4=O(h^2M^2).
\eeq
In the proof of Lemma \ref{fini-2} when  Lyapunov's condition is verified we now use \eqref{mo-1} instead of \eqref{ff-2}. Hence Lemma \ref{fini-2} holds assuming only \eqref{h-1} when \eqref{m-2} holds. Now Lemma \ref{fini-4} follows from the central limit theorem of Lemma \ref{fini-2} using the Skorokhod--Dudley--Wichura  representation theorem (cf.\ Shorack and Wellner (1986), p.\ 47) as in Lemma \ref{fini-3}.
\end{proof}

\subsection{Proofs of Theorems \ref{main-1}--\ref{bias-1}}\label{proof-1}
\noindent
{\it Proof of Theorem \ref{main-1}.} Using the results in Sections \ref{lem-mdep} and \ref{fine-dim} with Lemma \ref{fini-3}, for every $\varepsilon>0$ there are integers $m_0$ and $d_0$ such that
$$
\limsup_{N\to \infty}P\left\{ \|(N/h)^{1/2}Z_N-\Gamma_{N,d,m}\|>\varepsilon  \right\}<\varepsilon \quad \mbox{for all}\quad m>m_0, d>d_0,
$$
where
$$
\{\Gamma_{N,d,m}(t,s), 0\leq t,s\leq 1\}\stackrel{{\mathcal D}}{=}\left\{\sum_{r=1}^d\sum_{p=1}^d {\mathcal N}^{(m)}_{d^2}(r,p)\phi_r(t)\phi_p(s),
0\leq t,s\leq 1
\right\},
$$

$$
\Gamma_{d,m}(t,s)=\sum_{r=1}^d\sum_{p=1}^d {\mathcal N}^{(m)}_{d^2}(r,p)\phi_t(t)\phi_p(s),
$$

$\{\phi_i\}_{i=1}^\infty$ is an orthonormal basis of $L^2$, and $\{{\mathcal N}^{(m)}_{d^2}(r,p), 1\leq r,p\leq d\}$ is $d^2$--dimensional normal with zero mean
and
$$
E{\mathcal N}^{(m)}_{d^2}(r,p){\mathcal N}^{(m)}_{d^2}(r',p')=\int\!\!\!\cdots\!\!\!\int L^{(m)}(t,s,\tp, \spr)\phi_r(t)\phi_p(s) \phi_{r'}(\tp)\phi_{p'}(\spr)dtdsd\tp d\spr
$$
with
$$
L^{(m)}(t,s,\tp, \spr)=\left[C_m(t,s)C_m( \tp, \spr) +C_m(t,\tp)C_m(s,\spr) \right] \int_{-1}^1 K^2(z)dz,
$$
$$
C_m(t,s)=\sum_{\ell=-m}^m\mbox{cov}(X_{0,m}(t), X_{\ell,m}(s)),
$$
and the variables $X_{i,m}$ are defined in \eqref{as:3}. Using \eqref{fi-1} we conclude $\|L^{(m)}-L\|\to 0$, as $m\to \infty$ and therefore
$$
\{{\mathcal N}^{(m)}_{d^2}(r,p), 1\leq r,p\leq d\}\;\;\stackrel{{\mathcal D}}{\to}\;\;\{{\mathcal N}(r,p), 1\leq r,p\leq d\},\quad\mbox{as   }m\to\infty,
$$
where $\{{\mathcal N}(r,p), 1\leq r,p<\infty\}$ is Gaussian with zero mean and
$$
E{\mathcal N}(r,p){\mathcal N}(r',p')=\int\!\!\!\cdots\!\!\!\int L(t,s,\tp, \spr)\phi_r(t)\phi_p(s) \phi_{r'}(\tp)\phi_{p'}(\spr)dtdsd\tp d\spr.
$$
Hence we can define Gaussian processes $\Gamma_d^{(m)}(t,s)$ such that $\|\Gamma_{d,m}-\Gamma_d^{(m)}\}=o_P(1)$,
$\{\Gamma_d^{(m)}(t,s), 0\leq t,s \leq 1\} \stackrel{{\mathcal D}}{=}\{\Gamma_d(t,s), 0\leq t,s \leq 1\}$ and
$$
\Gamma_{d}(t,s)=\sum_{r=1}^d\sum_{p=1}^d {\mathcal N}(r,p)\phi_t(t)\phi_p(s).
$$
Observing that $\|\Gamma_d-\Gamma\|=o_P(1)$ as $d\to\infty$, where $\Gamma(t,s)=\sum_{r=1}^\infty\sum_{p=1}^\infty {\mathcal N}(r,p)\phi_t(t)\phi_p(s)$,
the proof of Theorem \ref{main-1} is complete.
\qed

\medskip
\noindent
{\it Proof of Theorem \ref{main-2}.} We can repeat the proof of Theorem \ref{main-1}, we only need to replace Lemma \ref{fini-3} with
Lemma \ref{fini-4}.
\qed

\medskip
\noindent
{\it Proof of Theorem \ref{bias-1}.}  It is easy to see that
\begin{align*}
E\hat{C}_N(t,s)=\sum_{\ell=-\infty}^\infty K\left(\frac{\ell}{h}\right)\gamma_\ell(t,s)-\frac{1}{N}\sum_{\ell=-\infty}^\infty K\left(\frac{\ell}{h}\right)\ell\gamma_\ell(t,s)
\end{align*}
and
$$
\left\|\sum_{\ell=-\infty}^\infty  K\left(\frac{\ell}{h}\right)\ell\gamma_\ell  \right\|=O(1).
$$
Let $\varepsilon>0$. Next we write
\begin{align*}
\sum_{\ell=-\infty}^\infty K\left(\frac{\ell}{h}\right)\gamma_\ell(t,s)=\sum_{\ell=-\infty}^\infty\gamma_\ell(t,s)+f_{N,1,\varepsilon}(t,s)+f_{N,2,\varepsilon}(t,s)-f_{N,3,\varepsilon}(t,s),
\end{align*}
where
$$
f_{N,1,\varepsilon}(t,s)=\sum_{|\ell|\leq \varepsilon h}\left(K\left(\frac{\ell}{h}\right)-1\right)\gamma_\ell(t,s), \quad \quad f_{N,2,\varepsilon}(t,s)=\sum_{|\ell|>\varepsilon h} K\left(\frac{\ell}{h}\right)\gamma_\ell(t,s)
$$
and
$$
f_{N,3,\varepsilon}(t,s)=\sum_{|\ell|>\varepsilon h} \gamma_\ell(t,s).
$$
Using assumption \eqref{par-2}  we conclude by the triangle inequality
$$
\|f_{N,2,\varepsilon}(t,s)\|\leq \sup_u|K(u)|\sum_{|\ell|>\varepsilon h} \|\gamma_\ell\|\leq (h\varepsilon)^{-{\frak q}'}\sup_u|K(u)|\sum_{\ell=-\infty} ^\infty |\ell|^{{\frak q}'} \|\gamma_\ell\|
$$
and similarly
$$
\|f_{N,3,\varepsilon}(t,s)\|\leq(h\varepsilon)^{-{\frak q}'}\sum_{\ell=-\infty} ^\infty |\ell|^{{\frak q}'} \|\gamma_\ell\|.
$$
By \eqref{par-1}  we obtain that
$$
\lim_{\varepsilon\to 0}\limsup_{h\to \infty}\max_{-\varepsilon h\leq \ell\leq  \varepsilon h }\left|(K(\ell/h)-1)/(|\ell|/h)^{\frak q}-{\frak K}\right|=0
$$
and therefore
$$
\lim_{\varepsilon\to 0}\limsup_{h\to \infty} \left\| h^{\frak q}f_{N,1,\varepsilon}-{\frak F}\right\|=0.
$$
Since ${\frak q}'>{\frak q}$ and $h^{\frak q}/N\to 0$, the proof of Theorem \ref{bias-1} is complete.
\qed

\subsection{Proof of Theorem \ref{th:eig-1} }\label{sec-eig-pr}
Following the arguments used in the proof of Proposition 1 of Kokoszka and Reimherr (2013) one can show that
\beq\label{g-1}
\max_{1\leq \ell\leq p}\|(N/h)^{1/2}(\hat{s}_\ell\hat{v}_\ell-v_\ell)-\hat{\mathcal G}_{\ell, N}\|=o_P(1),
\eeq
where
\begin{align*}
\hat{\mathcal G}_{\ell, N}=\sum_{k\neq \ell}\frac{v_k(t)}{\lambda_\ell-\lambda_k}\intt Z_N^*(u,s)v_\ell(u)v_k(s)duds
\end{align*}
and
$$
Z_N^*(u,s)=(N/h)^{1/2}(\hat{C}_N(u,s)-C(u,s)).
$$
By Theorems \ref{main-1} and \ref{bias-1} we have
\beq\label{g-2}
\max_{1\leq \ell\leq p}\|Z_N^*-(\Gamma_N+{\frak a}{\frak F})\|=o_P(1).
\eeq
Let ${\frak M}_\ell $ be the mapping from $L^2[0,1]^2\to L^2[0,1]$ defined by
$$
{\frak M}_\ell (f)(t)=\sum_{k\neq \ell }\frac{v_k(t)}{\lambda_\ell-\lambda_k}\intt f(u,s)v_\ell(u)v_k(s)duds.
$$
The linear operators ${\frak M}_\ell, 1\leq \ell\leq p$ are bounded since
\begin{align*}
\|{\frak M}_\ell (f)\|^2=\sum_{k\neq \ell}(\lambda_\ell-\lambda_k)^{-2}\left(\intt f(u,s)v_\ell(s)v_k(u)dsdu\right)^2\leq \frac{\|f\|^2}{\alpha_\ell},
\end{align*}
where
\begin{displaymath}
\alpha_\ell=\left\{
\begin{array}{ll}
\lambda_1-\lambda_2, &\quad\mbox{if  } \ell=1
\vspace{.3 cm}\\
\min\{\lambda_{\ell-1}-\lambda_\ell, \lambda_\ell-\lambda_{\ell+1}\},&\quad\mbox{if  } 2\leq \ell\leq p.
\end{array}
\right.
\end{displaymath}
The operators ${\frak M}_\ell, 1\leq \ell\leq p$ are linear and bounded and therefore they are continuous (cf.\ Debnath and  Mikusinski (1999, p.\ 27)). Hence \eqref{g-1} and  \eqref{g-2} imply that
$$
\max_{1\leq \ell\leq p}\|(N/h)^{1/2}(\hat{s}_\ell\hat{v}_\ell-v_\ell)-{\mathcal G}_{\ell, N}\|=o_P(1)
$$
where
$$
{\mathcal G}_{\ell,N}(t)=
\sum_{k\neq \ell}\frac{v_k(t)}{\lambda_\ell-\lambda_k}\intt (\Gamma_N(u,s)+{\frak a}{\frak F}(u,s))v_\ell(u)v_k(s)duds.
$$
Similar but somewhat simpler arguments give
$$
\max_{1\leq \ell\leq p}|(N/h)^{1/2}\hat{\lambda}_\ell-\lambda_\ell-{\frak g}_{\ell, N}|=o_P(1),
$$
where
$$
{\frak g}_{\ell, N}=\intt (\Gamma_N(t,s)+{\frak a}{\frak F}(t,s))v_\ell(t)v_\ell(s)dtds.
$$
Since $v_\ell(t)v_k(s)$ is a basis in $L^2([0,1]^2,\Rn)$, by the Karhunen--Lo\'eve expansion we have for each $N$
\beq\label{root}
\{\Gamma_N(t,s), 0\leq t,s\leq 1\}\stackrel{{\mathcal D}}{=}\left\{\sum_{1\leq i, j<\infty}\sigma_{i, j}^{1/2}{\mathcal N}_{i, j}v_i(t)v_j(s), 0\leq t,s\leq 1
\right\},
\eeq
where
\begin{displaymath}\label{rep-last}
\sigma_{i,j}=\left\{
\begin{array}{ll}
\displaystyle \lambda_i\lambda_j\int_{-c}^cK^2(z)dz,\quad &\mbox{if   }i\neq j,
\vspace{.3 cm}\\
\displaystyle 2\lambda_i^2\int_{-c}^cK^2(z)dz,\quad &\mbox{if   }i= j.
\end{array}
\right.
\end{displaymath}
The representation of the limit in Theorem \ref{th:eig-1} follows from \eqref{root} and the definitions of ${\frak g}_{\ell, N}$ and ${\mathcal G}_{\ell,N}(t)$.

\medskip

\end{document}